\documentclass[11pt]{amsart}
\usepackage{amssymb}
\usepackage{latexsym} 
\usepackage{srcltx} 
\usepackage[normalem]{ulem}
\usepackage{esint}
\usepackage{xcolor}
\usepackage{cancel}
\usepackage{upgreek}
\usepackage{mathrsfs}
\usepackage[numbers]{natbib}   
\usepackage[colorlinks=true, citecolor=green]{hyperref}

\DeclareFontFamily{U}{mathx}{}
\DeclareFontShape{U}{mathx}{m}{n}{<-> mathx10}{}
\DeclareSymbolFont{mathx}{U}{mathx}{m}{n}
\DeclareMathAccent{\widecheck}{0}{mathx}{"71}

\newcommand{\Polyn}[2]{\mathcal{P}_{#1}{(\R^n)}}

\newcommand{\Rcap}[2]{\mathrm{Cap}_{\mathbf{I}_{#1},#2}}

\newcommand{\PRcap}[2]{\mathrm{Cap}_{\mathbb{I}_{#1},#2}}
\newcommand{\Dyadic}{\dot{\Delta}_j}
\newcommand{\R}{\mathbb{R}}
\newcommand{\LpNorm}[3]{\|#1\|_{L^{#2}{(#3)}}}
\newcommand{\Chi}[2]{\upchi_{#1}(#2)}
\newcommand{\Heat}{\mathsf{H}}

\newcommand{\Ms}[3]{ \mathsf{M}^{#1,#2}_{#3}}
\newcommand{\TrMs}[5]
{\dot{F}^{#1}_{#5}\Ms{#2}{#3}{#4}}
\newcommand{\Scharwzd}[1]{\mathcal{S}'_{#1}(\R^n)}
\newcommand{\Qs}{\Scharwzd{}/ \Polyn{}}

\newcommand{\PR}[1]{\mathbb{I}_{#1}}
\newtheorem{theorem}{Theorem}[section]
\newtheorem{lemma}[theorem]{Lemma}
\newtheorem{corollary}[theorem]{Corollary}
\newtheorem{proposition}[theorem]{Proposition}
\newtheorem{definition}[theorem]{Definition}
\newtheorem{remark}[theorem]{Remark}        
\numberwithin{equation}{section}

\begin{document}

\title[]{ Optimal global well-posedness for  semilinear heat equations}

\author[Gael. Y. Diebou ]{Gael Y. Diebou}
\address{Department of Mathematics,  University of Toronto, 40 St. George Street, Toronto, Canada.}
\email{gaely.diebou@utoronto.ca}

\author[Nguyen Cong Phuc]{Nguyen Cong Phuc}
\address{Department of Mathematics, Louisiana State University, 303 Lockett Hall, Baton Rouge, LA 70803, USA.}
\email{pcnguyen@math.lsu.edu}


\begin{abstract}    We address the question of global in time existence for the semilinear parabolic equation
\begin{equation*}
 \begin{cases}
 \partial_tu-\Delta u=|u|^{b-1}u \quad {\rm ~in~} \mathbb{R}^n\times (0,\infty),\\
 u(\cdot,0)= u_0 \quad {\rm ~in~} \mathbb{R}^n,   
\end{cases}
\end{equation*}
where $b> 1+2/n, n\geq 1$. We found a space of initial  data $Z$ that guarantees the existence of a global in time solution whenever the norm of the initial datum $u_0$ in $Z$ is small. The space $Z$ is optimal in the sense that  if $u_0$ is a nonnegative distribution and the equation admits a nonnegative solution $u\in L^b_{\rm loc}(\R^n\times(0,\infty))$, then $u_0\in Z$.
Moreover, our result contains all of the known results in the literature. In particular,  it implies   that global in time existence does  hold for end-point Besov and Triebel-Lizorkin spaces, which resolves an open problem posed by L. Brandolese and F. Cortez  (J. Funct. Anal. {\bf 276} (2019),  2589--2604).   
\end{abstract}

\maketitle

\section{Introduction}

One of the most studied nonlinear partial differential equations of parabolic type over the last six decades  is the semilinear Cauchy problem
 \begin{align}
 \begin{cases}\label{Cauchy-T}
 \partial_tu-\Delta u=|u|^{b-1}u \quad {\rm ~in~} \R^n\times (0,T),\\
 u(\cdot,0)= u_0 \quad {\rm ~in~} \R^n,
 \end{cases}   
\end{align}
where $n\in \mathbb{N}$, $b>1$, and $0<T\leq \infty$. In particular, many local and global in time well-posedness results for the Cauchy problem \eqref{Cauchy-T} were obtained in \cite{Wei80, Wei81, BP, Giga, KY, BC,  Mey, MYZ, Sou,  Q-H}, etc.
On the other hand, it follows from the earlier work of Fujita \cite{Fuj} that no nontrivial nonnegative global in time ($T=\infty$) solution   exists when $b<1+ 2/n$.  Such a nonexistence phenomenon also happens at the endpoint 
$b=1+ 2/n$ (see \cite{Hay} and \cite{Wei80, Wei81}).

In this paper, we are interested in the Cauchy problem \eqref{Cauchy-T} with singular initial data $u_0$.  For initial data in the Lebesgue space $L^a$,  $a \geq 1$, the number $n(b-1)/2$ is a critical threshold. It it known that if $1\leq a< n(b-1)/2$,  then no  nonnegative regular solution $u\in C([0,T], L^a(\R^n))$
exists in general for nonnegative initial data $u_0\in L^a(\R^n)$ (see \cite[Theorem 1]{Wei80}).

On the other hand, let  $b>1$ and $a\geq n(b-1)/2, a>1$ (or $a> n(b-1)/2, a\geq 1)$. In  \cite{Wei80},  Weissler used a fixed point theorem in a subspace of $C([0,T], L^a(\R^n))$   to construct a solution $u\in C([0,T], L^a(\R^n))\cap L^\infty_{\rm loc}((0, T], L^\infty(\R^n))$, $T=T(u_0)>0$, along with some other properties, to the Cauchy problem \eqref{Cauchy-T}.
Such solutions are sometimes referred to  as Weissler's solutions.

Later, Brezis and Cazenave \cite{BC} showed that this solution is unique in the class $C([0,T], L^a(\R^n)) \cap L^\infty_{\rm loc}((0, T], L^\infty(\R^n))$. Moreover, if $a\geq n(b-1)/2$ and $a>b$, then uniqueness also holds in the larger class 
 $C([0,T], L^a(\R^n))$ (see \cite{Wei80} for the case $a> n(b-1)/2$ and $a>b$ and \cite{BC} for the case $a= n(b-1)/2$ and $a>b$). However, we mention that uniqueness is lost in the class $C([0,T], L^a(\R^n))$
 in the critical case $a= n(b-1)/2$ and $a=b$, i.e., $a=b=n/(n-2)$ (see \cite{Ter} and also \cite{NS} for the case of a bounded domain).

It is  easy to verify that if $u(x,t)$ is a solution of  \eqref{Cauchy-T}, then for any $\lambda >0$, the function $u_\lambda(x,t):=\lambda^{\frac{2}{b-1}} u(\lambda x, \lambda^2 t)$ solves the same Cauchy problem  on $\R^n\times [0, \lambda^{-2} T]$ with initial 
condition $u_{0 \lambda}(x)=\lambda^{\frac{2}{b-1}} u_0(\lambda x)$.  Note that in the case $b=3$  this scale invariance is similar to that of the   Navier-Stokes system.  In \cite{Mey} (see also \cite{Mey01}),   Y. Meyer treated the  case $b=n=3$ as a simple model for  the 
three-dimensional Navier-Stokes system. In doing so, he posed the following conjecture  regarding \eqref{Cauchy-T} with $b=n=3$ (see \cite[Conjecture 2.1]{Mey}).

\smallskip

\noindent {\bf Conjecture:} There exists a Banach space $(\mathcal{Z}, \|\cdot\|_{\mathcal{Z}})$ such that the following holds.

\noindent  $(i)$ $\|f\|_{L^4(\R^3)}^2 \lesssim \| \nabla f\|_{L^2(\R^3)} \| f \|_{\mathcal{Z}}$.

\noindent  $(ii)$ If $u_0\in C^\infty_c(\R^3)$ is such that $\| u_0 \|_{\mathcal{Z}}\leq\eta$, the lifetime $T$ of  the corresponding solution of  \eqref{Cauchy-T} with $b=n=3$ is infinite. Here, $\eta$ is a  (small) positive   constant.  

\noindent  $(iii)$  $\| \lambda f(\lambda \cdot)\|_{\mathcal{Z}} = \|f\|_{\mathcal{Z}}$ for all $\lambda>0$.

\noindent  $(iv)$ $\|f\|_{\mathcal{Z}}$ is small when $f$ is oscillating.\\

Throughout the manuscript, for nonnegative numbers $N,M$, we write $N\lesssim M$ to mean $N\leq CM$ for an unimportant constant $C>0$. 
\smallskip

Let $\dot{B}^{\alpha}_{p, q}(\R^n)$  (resp. $\dot{F}^{\alpha}_{p, q}(\R^n)$), $\alpha \in \R, p, q\in(0,\infty]$,  denote the Besov spaces (resp. Triebel-Lizorkin spaces) in $\R^n$ (see Section \ref{SecFunctSpace}).
Y. Meyer observed that this conjecture holds for $\mathcal{Z}= \dot{B}^{-1/2}_{6, \infty}(\R^3)$ and even more generally for $\mathcal{Z}=\dot{B}^{-1 +3/p}_{p, \infty}(\R^3), 3<p<9$, (see \cite{BCo, KY, MYZ}).
Moreover, in these cases, property $(ii)$ holds also for any $u_0\in L^3(\R^3)$ and the lifetime of the corresponding  Weissler's solution is infinite.
Note that $L^3(\R^3)\subset \dot{B}^{-1 +3/p}_{p, \infty}(\R^3) \subset \dot{B}^{-1}_{\infty, \infty}(\R^3)$, $3<p\leq \infty$. The significance of Besov spaces (or Triebel-Lizorkin spaces) of {\it negative} order 
 lies in the fact that the condition of  being sufficiently small in such spaces can be verified for highly oscillating data, which justifies property $(iv)$. 
  For example, if $f\in L^3(\R^3)$, with arbitrarily large norm,  and  $f_k(x)=e^{i k\cdot x} f(x), k\in\mathbb Z^3$, then $\|f_k\|_{L^3(\R^3)}=\|f\|_{L^3(\R^3)}$, whereas $\|f_k\|_{\dot{B}^{-1 +3/p}_{p, \infty}(\R^3)}\rightarrow 0$ as $|k|\rightarrow\infty$, provided $3<p\leq \infty$ (see \cite[Section 4.1]{Can}).

The main point of Meyer's conjecture is to find the largest Banach space $\mathcal{Z}$ such that properties  $(i)$-$(iv)$ holds. 
 Meyer \cite{Mey} proved property $(i)$ for $\mathcal{Z}=\dot{B}^{-1}_{\infty, \infty}(\R^3)$ and raised the question whether or not the maximal Besov space 
$\dot{B}^{-1}_{\infty, \infty}(\R^3)$ fullfils $(ii)$.  This question was addressed by L. Brandolese and F. Cortez in \cite[Theorem 2.2]{BCo}, where the authors showed that for any $\eta>0$ and $3<q\leq \infty$, there exists $u_0\in \mathcal{S}(\R^3)$ such that 
$$\|u_0\|_{\dot{B}^{-2/3}_{9, q}(\R^3)} \leq \eta,$$
and the lifetime $T$ of the corresponding Weissler's  solution   is finite. Thus,  the conjecture above fails for  $\mathcal{Z}=\dot{B}^{-2/3}_{9, q}(\R^3), 3<q\leq \infty$, and hence also for $\mathcal{Z}=\dot{B}^{-1}_{\infty, \infty}(\R^3)$  (at least 
 when the initial data are assumed to be in $\mathcal{S}(\R^3)$ instead of in  $C^\infty_c(\R^3)$ as stated in $(ii)$).  More generally, the existence of finite time blowup Weissler's solutions was also obtained in \cite[Theorem 2.2]{BCo} for general $n$ and $ b \in \mathbb{N}$,  $b>1+2/n$, and for initial datum  $u_0\in \mathcal{S}(\R^n)$ such that the Besov norm $\|u_0\|_{\dot{B}^{-2/b}_{nb(b-1)/2, q}}$ for $q\in (b,\infty]$, is small. The endpoint case $q=b$ was stated as an open problem in \cite[page 12]{BCo}:
 \emph{ Let $b>1+2/n$ and $u_0$ a smooth and well decaying initial datum
as $|x|\rightarrow +\infty$. Does the smallness assumption  $\|u_0\|_{\dot{B}^{-2/b}_{nb(b-1)/2, b}}\ll1$
 imply that the solution of the Cauchy problem  \eqref{Cauchy-T} is global in time?}

One of the goals of this paper is to provide a positive answer to the above question, even for initial data in $L^a(\R^n)$ for any $a\geq n(b-1)/2$ and the corresponding Weissler's solutions. At the same time, we also obtain a  space, optimal in some sense, that resolves the conjecture of Y. Meyer.   In fact, all known admissible spaces of initial data for \eqref{Cauchy-T} to have  global in time solutions are included in our space. More precisely, our main results are stated in Theorems  \ref{thm:maintheorem} and \ref{PerRes} below and our space of initial data is defined by 
$$Z_{b, b'}(\R^n):=\left\{f\in \mathcal{S}'(\R^n): \|f\|_{Z_{b,b'}(\R^n)}<+\infty \right\},$$ 
where $b> 1+ 2/n $, $b'=b/(b-1)$, and 
\begin{align*}
\|f\|_{Z_{b,b'}(\R^n)}:= \sup_{E\subset \R^{n+1}_+} \left(\frac{ \iint_{E}| e^{t\Delta} f(x) |^b dx dt}{{\rm cap}_{2,1, b'}(E)}  \right)^{1/b}.
\end{align*}
Here the supremum is taken over all compact subsets $E$ of $\R^{n+1}_{+}=\R^n\times (0, \infty)$, and ${\rm cap}_{2,1, b'}(E)$ is a parabolic capacity defined by
$${\rm cap}_{2,1, b'}(E) =\inf_{\Phi\in K(E)} \left\{ \int_{\R^{n+1}} (|\partial_t \Phi|^{b'} + |D^2 \Phi|^{b'}) dx dt \right\}, $$
with  
$$K(E)=\{\Phi\in C_c^\infty(\R^{n+1}), 0\leq \Phi\leq 1,   \Phi = 1 {\rm ~in ~a ~neighborhood ~of~} E\}.$$
See Section \ref{SecFunctSpace} for different notions of capacities used in this paper. 
Of course, when dealing with Meyer's conjecture, we take $\mathcal{Z}=Z_{3, 3/2}(\R^3)$.  

In particular, the following theorem is a direct consequence of Theorems  \ref{thm:maintheorem}, \ref{PerRes}, and \ref{BP-Hung} below.
 \begin{theorem}\label{ForIntro}
Let $b>1+ \frac 2 n$ and $u_0\in \mathcal{S}'(\R^n)$. There exists $\varepsilon=\varepsilon(n,b)>0$ such that  if $ u_0\in Z_{{b, b'}}(\R^n)$ with
 $\|u_0\|_{Z_{b, b'}(\R^n)} \leq \varepsilon,$ then there exists a   solution $u$ to the Cauchy problem \eqref{Cauchy-T} with $T=\infty$. Moreover, 
 if, in addition, $u\in L^a(\R^n)$, 
 $a\geq \frac{n(b-1)}{2}$, then $u\in C([0,\infty), L^a(\R^n))$, and $u$ coincides wih the corresponding Weissler's solution. 
 
 Conversely, if $u$ is a nonnegative distribution in $\R^n$ and \eqref{Cauchy-T} admits a nonnegative global in time solution $u\in L^b_{\rm loc}(\R^n\times (0,\infty))$,  then $ u_0\in Z_{{b, b'}}(\R^n)$.
\end{theorem}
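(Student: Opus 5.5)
The plan is to work with the mild (integral) formulation
\[
u(t)=e^{t\Delta}u_0+\int_0^t e^{(t-s)\Delta}\big(|u|^{b-1}u\big)(s)\,ds=:e^{t\Delta}u_0+\mathcal{N}(u)(t),
\]
and to run a contraction argument in a capacitary Morrey-type space on $\R^{n+1}_+$. Let $X$ be the space of functions $v$ on $\R^{n+1}_+$ with
\[
\|v\|_X:=\sup_{E}\Big(\frac{\iint_E|v|^b\,dx\,dt}{{\rm cap}_{2,1,b'}(E)}\Big)^{1/b}<\infty ,
\]
the supremum being over compact $E\subset\R^{n+1}_+$. With this choice, $\|u_0\|_{Z_{b,b'}}=\|e^{t\Delta}u_0\|_X$ by definition, so the linear term is controlled for free.

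\textbf{Step 1: the nonlinear estimate.} The central estimate to prove is
\[
\|\mathcal{N}(u)-\mathcal{N}(v)\|_X\lesssim \big(\|u\|_X^{b-1}+\|v\|_X^{b-1}\big)\|u-v\|_X .
\]
The Duhamel operator is dominated pointwise by the parabolic Riesz potential $\mathcal{I}_2 f(x,t)=\iint_{s<t}\frac{|f(y,s)|}{(|x-y|+|t-s|^{1/2})^{n}}\,dy\,ds$. The estimate then reduces to two facts.
\begin{itemize}
\item[(a)] The space $X$ is characterized as the set of $v$ such that $|v|^b\,dx\,dt$ is a measure satisfying the trace inequality $\iint|\mathcal{I}_2 g|^{b}\,|v|^b\le C\|g\|_{L^{b}}^{b}$. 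By duality with ${\rm cap}_{2,1,b'}$, this is equivalent to requiring $\mu(E)\lesssim{\rm cap}(E)$. This is the parabolic analogue of the Maz'ya–Verbitsky theory.
\item[(b)] The Maz'ya–Verbitsky-type statement that, for such measures, $\mathcal{I}_2(|v|^b)\in X$ with norm $\lesssim\|v\|_X^b$. In the elliptic case this is the well-known fact that $(\mathcal{I}_\alpha\mu)^{q}\,dx$ stays capacity-bounded when $\mu$ is.
\end{itemize}
For the difference estimate I will use $\big||u|^{b-1}u-|v|^{b-1}v\big|\lesssim(|u|^{b-1}+|v|^{b-1})|u-v|$, Hölder on each $E$, and the fact that $X$ is a lattice. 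The key is to exploit the parabolic Wolff-potential/Kerman–Sawyer characterization for the condition $b>1+2/n$. This condition ensures that the capacity is nontrivial, i.e.\ $2b'<n+2$. Once the estimate holds, the Banach fixed point theorem in a small ball of $X$ gives a global solution when $\|u_0\|_{Z}\le\varepsilon$.

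\textbf{Step 2: identification with Weissler's solution.} If in addition $u_0\in L^a$ with $a\ge n(b-1)/2$, run the same iteration in $X\cap\{\sup_t\|u(t)\|_{L^a}<\infty\}$ (or its Kato-type weighted variant). Picard iterates stay in $C([0,\infty),L^a)$, so the limit is in that class and is smooth for $t>0$. The uniqueness of Brezis–Cazenave in $C([0,T],L^a)\cap L^\infty_{\rm loc}((0,T],L^\infty)$ then gives coincidence with Weissler's solution.

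\textbf{Step 3: the converse.} Suppose $u\ge0$ in $L^b_{\rm loc}$ solves the problem globally with $u_0\ge0$. Positivity gives $u\ge e^{t\Delta}u_0$ and $u\ge\mathcal{N}(u)$. Testing the equation against $\Phi^{b'}$ with $\Phi\in K(E)$ (a Baras–Pierre argument) yields
\[
\iint_E u^b\le\iint u^b\Phi^{b'}\le \iint u\,\Phi^{b'-1}\big(|\partial_t\Phi|+|\Delta\Phi|+\Phi^{-1}|\nabla\Phi|^2\big).
\]
By Hölder this is at most $\big(\iint u^b\Phi^{b'}\big)^{1/b}\,{\rm cap}(E)^{1/b'}$, up to handling the gradient term via a Gagliardo–Nirenberg inequality. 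Hence $\iint_E u^b\lesssim{\rm cap}_{2,1,b'}(E)$, and therefore $\iint_E (e^{t\Delta}u_0)^b\lesssim{\rm cap}(E)$, i.e.\ $u_0\in Z_{b,b'}$. Some care is needed with $t$ near $0$, which is handled by taking $E$ compact in the open half-space.

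\textbf{Main obstacle.} The hardest part is Step 1(b): proving that the parabolic capacitary Morrey space is stable under $f\mapsto\mathcal{I}_2(|f|^b)$. I would first try to transfer the elliptic Maz'ya–Verbitsky proof. This means establishing a parabolic Wolff inequality and a pointwise bound $\mathcal{I}_2\big((\mathcal{I}_2\mu)^{b}\big)\lesssim\mathcal{I}_2\mu$ for capacity-bounded $\mu$, using dyadic parabolic cubes.
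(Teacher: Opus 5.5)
\textbf{The genuine gap is Step 2.} Your solution space carries only the capacitary $Y_{b,b'}$ norm, and the sketch gives no mechanism for propagating $L^a$ bounds. Only $\|u_0\|_{Z_{b,b'}}$ is small; $\|u_0\|_{L^a}$ may be arbitrarily large. So you cannot close a contraction in $X\cap L^\infty_tL^a_x$. What you need is an estimate of $\mathcal{N}(u)-\mathcal{N}(v)$ in $L^\infty_tL^a_x$ that is \emph{linear} in $\|\,|u|+|v|\,\|_{L^\infty_tL^a_x}$, with a coefficient made small by the $Z$-controlled part of the norm.

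Nothing in your sketch provides this. The Duhamel bound $L^{a/b}\to L^a$ produces the factor $(t-s)^{-n(b-1)/(2a)}$, which equals $(t-s)^{-1}$ and is non-integrable at $a=n(b-1)/2$. For $a<b$, $\||u|^b\|_{L^1}$ is not controlled by $\|u\|_{L^a}$ at all.

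The appeal to Brezis--Cazenave has a second problem: it requires $u\in L^\infty_{\rm loc}((0,T],L^\infty)$, and you give no argument for this. Membership in $C([0,T],L^a)$ certainly does not imply it in the critical case. That class contains solutions that are not locally bounded for $t>0$; otherwise Brezis--Cazenave would force uniqueness there, which fails for $a=b=n/(n-2)$. So ``smooth for $t>0$'' is unjustified.

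The missing idea is pointwise decay controlled by the $Z$-norm. The paper adds $\sup_{t>0}t^{1/(b-1)}\|u(t)\|_{L^\infty}$ to the norm of $X$. A Kato norm $\sup_t t^{\frac{1}{b-1}-\frac{n}{2q}}\|u(t)\|_{L^q}$ with $q<\infty$ would not do, since it is not controlled by $\|u_0\|_{Z_{b,b'}}$ alone. The argument then runs as follows.
\begin{enumerate}
\item Lemma~\ref{ZtoBes} bounds this decay norm for the linear term. It writes $e^{t\Delta}u_0=\frac{2}{t}\int_0^{t/2}e^{(t-\tau)\Delta}e^{\tau\Delta}u_0\,d\tau$ and applies H\"older against the Gaussian on parabolic cylinders of size $\sqrt t$.
\item Lemma~\ref{Tbound} propagates the decay to the Duhamel term by splitting the time integral into $(0,t/2)$ and $(t/2,t)$.
\item Interpolating $L^a$ between $L^\infty$ and $L^{a/b}$ (or $L^1$ if $a<b$), Lemma~\ref{UnEst} gives
\[
\|T(u)-T(v)\|_{L^\infty_tL^a_x}\lesssim \|u-v\|_X^{\frac{b-1}{b}}\,\|\,|u|+|v|\,\|_X^{\frac{(b-1)^2}{b}}\,\|\,|u|+|v|\,\|_{L^\infty_tL^a_x}.
\]
\item Hence the Picard iterates satisfy $B_{N+1}\le B_N(1+C\kappa^{N(b-1)/b})$, where $B_N=\max_{j\le N}\|u_j\|_{L^\infty_tL^a_x}$. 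The infinite product converges, giving uniform $L^a$ bounds and convergence in $L^\infty_tL^a_x$.
\end{enumerate}
Continuity in time and membership in the Brezis--Cazenave class then follow as in Theorem~\ref{PerRes}. At $t=0$ in the critical case this uses an approximation of $u_0$ in $L^a$ by $C^\infty_c$ functions.

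\textbf{Step 1 is sound up to two repairs.} It is the paper's own mechanism and does give existence by a contraction in $Y_{b,b'}$ alone. First, the trace inequality dual to ${\rm cap}_{2,1,b'}$ is $\iint(\mathbb{I}_2 g)^{b'}|v|^b\lesssim\|g\|_{L^{b'}}^{b'}$, with exponent $b'$, not $b$. Second, duality only relates this inequality to ${\rm Cap}_{\mathbb{I}_2,b'}$. You therefore also need ${\rm cap}_{2,1,b'}\simeq{\rm Cap}_{\mathbb{I}_2,b'}$ on compact sets, which the paper proves by smooth truncation in Theorem~\ref{thm:cap-equiv}.

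\textbf{Step 3 is a genuinely different and more direct route.} The paper passes through the Baras--Pierre criterion (Theorem~\ref{BP}), a trace inequality for $u_0$ on $\{t=0\}$, the comparison \eqref{0-cap}, and the capacitary trace theory for $u_0\otimes\delta_0$. You instead prove a universal bound $\|u\|_{Y_{b,b'}}\le C(n,b)$ for the nonnegative solution and use $u\ge e^{t\Delta}u_0$. This avoids Proposition~\ref{prop:equiv}; the paper's route in return yields all the equivalences of Theorem~\ref{BP-Hung}. Three fixes are needed.
\begin{enumerate}
\item With the test function $\Phi^{b'}$, H\"older leaves $\Phi^{-b'}|\nabla\Phi|^{2b'}$, which no Gagliardo--Nirenberg inequality controls. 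Test with $\Phi^{2b'}$ instead, as the paper does in proving (i)$\Rightarrow$(ii) of Theorem~\ref{BP-Hung}. The leftover weights are then bounded and $\int|\nabla\Phi|^{2b'}dx\lesssim\|\Phi\|_{L^\infty}^{b'}\int|D^2\Phi|^{b'}dx$ applies.
\item Taking $E$ in the open half-space does not keep ${\rm supp}\,\Phi$ away from $t=0$, where you only know $u\in L^b_{\rm loc}(\R^n\times(0,\infty))$. Multiply by $\eta(t)^{2b'}$ with $\eta$ nondecreasing and vanishing near $0$. The extra term $-u\,\partial_t(\eta^{2b'})\Phi^{2b'}\le 0$ has a favorable sign. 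Absorb the now finite quantity $\iint u^b\eta^{2b'}\Phi^{2b'}$, and let $\eta\uparrow 1$.
\item The inequality $u\ge e^{t\Delta}u_0$ must be derived from the notion of solution, for example from the integral representation of nonnegative solutions.
\end{enumerate}
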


\begin{remark}
    Interestingly,  one can show  that $Z_{b, b'}(\R^n)$ is related to a class of Koch-Tataru type spaces in the following  way:
$$T_{b,\epsilon}(\R^n) \hookrightarrow Z_{b, b'}(\R^n) \hookrightarrow T_{b,0}(\R^n), \qquad \epsilon>0,$$
where for $b>1+2/n$ and $\epsilon\geq 0$,
$$T_{b, \epsilon}(\R^n):=\left\{f\in \mathcal{S}'(\R^n): \|f\|_{T_{b,\epsilon}(\R^n)}<+\infty \right\},$$
with
$$\|f\|_{T_{b,\epsilon}(\R^n)}:= \left(\sup_{x\in\R^n,\,  r>0}  r^{2 b' (1+\epsilon)-n-2} \int_0^{r^2} \int_{B_r(x)} |e^{t\Delta} f|^{b(1+\epsilon)} dy dt \right)^{\frac{1}{b(1+\epsilon)}}.$$
For this see Theorem \ref{KT-FP} below. Note that the endpoint space $T_{b,0}(\R^n)$ is an analogue of the Koch-Tataru space (see \cite{KT})  in terms of  scaling and optimal integrability. Indeed, in order to make sense of a solution $u$ to \eqref{Cauchy-T} we need at least that $u\in L^b_{\rm loc}(\R^n\times [0, \infty))$. Moreover, since the equation is scale and translation invariance, we want    that 
$$\sup_{x\in \R^n, r>0} r^{2b' -n-2} \int_0^{r^2} \int_{B_r(x)} |u|^b dy dt <+\infty.  $$
Thus, the space of initial data should  at least  be included in  $T_{b, 0}(\R^n)$. 

\end{remark}
 
Theorem \ref{ForIntro} shows that the space  $Z_{b, b'}(\R^n)$ is sharp for the existence of global in time solutions to the Cauchy problem \eqref{Cauchy-T} at least when dealing with nonnegative initial data and nonnegative solutions. See Theorem \ref{BP-Hung} below for more results in the case of nonnegative initial data. In particular, 
it  provides explicit characterizations of a (nonlinear) parabolic weighted norm inequality intrinsic to the Cauchy problem \eqref{Cauchy-T} discovered earlier in \cite{BP} (see also \cite{KV}).

For initial tempered distributional data, as a consequence of Theorem \ref{BbFbZ} below, one has  
  $$\dot{B}^{-2/b}_{nb(b-1)/2, b}(\R^n)\hookrightarrow \dot{F}^{-2/b}_{nb(b-1)/2, b}(\R^n) \hookrightarrow Z_{b, b'}(\R^n), \quad b>1+ \frac 2 n,$$
 which provides the   answer to  the question of Brandolese and Cortez not only for  the Besov space  $\dot{B}^{-2/b}_{nb(b-1)/2, b}(\R^n)$ but also for the larger  Triebel-Lizorkin space  $\dot{F}^{-2/b}_{nb(b-1)/2, b}(\R^n)$.
  These spaces are not, in general, comparable to the Besov-Morrey space 
   $\dot{B}_{\infty}^{\frac{n}{p}-\frac{2}{b-1}} \mathcal{M}^{q, \frac{n q }{p}}$ for any 
  $b\leq q\leq p$  and $n(b-1)/2<p< n b(b-1)/2$ that was used  by Kozono and Yamayaki in \cite{KY} (see Section \ref{SecFunctSpace} for the definition of Besov-Morrey spaces).
   In  Corollary \ref{KYinZ} below, we show  that in fact these Besov-Morrey  spaces are also included  in $Z_{b,b'}(\R^n)$.
  
  The smallness of the norm of $u_0$ in the Morrey space  $\mathcal{M}^{q, \frac{2 q}{b-1}}(\R^n)$, $1<q\leq n(b-1)/2$, also yields a global in time solution to the Cauchy problem \eqref{Cauchy-T} (see \cite{Sou}). 
More generally, Q.-H. Nguyen \cite{Q-H} obtained a similar existence result  for $u_0$ belonging to the space of signed measures $\mu$ on $\R^n$ such that     
 $$|\mu|(K)\leq A\,  {\rm Cap}_{ {\bf I}_{\frac{2}{b}, b'}}(K)$$
 for all compact sets $K\subset\R^n$.  Here, ${\rm Cap}_{ {\bf I}_{\frac{2}{b}, b'}}(\cdot)$ is a Riesz capacity of order $2/b$ (see Section \ref{SecFunctSpace}). These results are also special cases of our result in which one can take 
 initial data in $\TrMs{-\frac{2}{b}}{\frac{2}{b}}{b'}{b}{b}(\R^n)$, a Triebel-Lizorkin type space based on a Sobolev multipler type space (see Section \ref{SecFunctSpace} and Theorem \ref{BbFbZ} and Corollary \ref{MorreyandCapspace}). 
 
Finally, we remark that, by Theorem \ref{thm:cap-equiv} and Proposition \ref{prop:equiv} below, a tempered distribution $f$ belongs to $Z_{b,b'}(\R^n)$, $b>1+ 2/n$, if and only if there exists a constant $C>0$ such that 
\begin{equation}\label{I2abstract}
\mathbb{I}_{2} [\mathbb{I}_2 [\nu]^{b'}] \lesssim \mathbb{I}_2[\nu] <+\infty \quad {\rm a.e.},
\end{equation}  
 where $\nu(x,t)= \upchi_{(0,\infty)}(t) |e^{t\Delta} f(x)|^b$. Here  $\mathbb{I}_2$ is the  parabolic Riesz potential of order $2$ defined  by 
$$\mathbb{I}_{2}[\nu](x,t)=\int_{\R^{n+1}} \frac{\nu(y,s)}{(|t-s| +|x-y|^2)^{n/2}} dyds,$$ and $\upchi_{I}$ denotes the characteristic function of the set $I$.
 
 Then by the theory developed in \cite{KV} (see Theorem 4.9 and Proposition 2.9 of \cite{KV}), condition \eqref{I2abstract} is also  equivalent to a more abstract condition: {\it There exist a function $v\geq \nu$
 on $\R^{n+1}$ and a constant $B>0$ such that $\mathbb{I}_{2}[v^{b}]\leq B v$.} Thus in the quadratic case $b=2$, the space $Z_{2,2}(\R^n), n\geq 3$,  coincides with the space $\mathcal{V}^{\alpha, \beta}$, with $\alpha=2$ and $\beta=0$, that was employed by P. G. Lemari\'e-Rieusset in \cite{LR18} to deal with the equation $\partial_t u + (-\Delta)^{\alpha/2} u= (-\Delta)^{\beta/2} u^2$ in the case $0<\alpha< n+2\beta$ and $0<\beta<\alpha$.

\section{Capacities and function spaces}\label{SecFunctSpace}

\subsection{ Parabolic capacities and potentials}
In what follows,  for a Borel set $E\subset\R^d, d\in \mathbb N$, we denote by $\mathfrak{M}(E)$ the set of signed Radon measures in $E$ and its positive cone is denoted by $\mathfrak{M}^+(E)$.

 Given  $\mu\in \mathfrak{M}^+(\R^n), n\in \mathbb N$, we define its Riesz potential of order $\alpha\in (0,n)$ by 
 \begin{equation*}
\mathbf{I}_\alpha[\mu](x)=\int_{\R^n} \frac{1}{|x-y|^{n-\alpha}} d\mu(y) ,\quad x\in \R^n.   
 \end{equation*} 
Similarly, for $\nu\in \mathfrak{M}^+(\R^{n+1})$, we define its parabolic Riesz potential of order $\beta \in (0,n+2)$ by 
$$\mathbb{I}_{\beta}[\nu](x,t)=\int_{\R^{n+1}} \frac{1}{(|t-s| +|x-y|^2)^{(n+2-\beta)/2}} d\nu(y,s),$$ 
where $x\in\R^n, \, t\in\R$. 

Denote by $B_r(x)$ the open ball in $\R^n$ centered at $x$ with radius $r>0$ and    
$Q_r(x,t):=B_r(x)\times (t-r^2/2,t+r^2/2)$ the parabolic cylinder.
It is not hard to see that 
 \begin{equation*}
\mathbf{I}_\alpha[\mu](x)= (n-\alpha) \int_0^\infty\frac{\mu(B_r(x))}{r^{n-\alpha}} \frac{dr}{r},\quad x\in \R^n,  
 \end{equation*}
and 
\begin{equation*}
\PR{\beta}[\nu](x,t)\simeq \int_0^\infty  \frac{\nu(Q_r(x,t))}{r^{n+2-\beta}}\frac{dr}{r},\quad (x,t)\in \R^{n+1}.
\end{equation*}
Recall that  the heat kernel  on $\R^{n+1}$ is given by
\begin{equation*}
\Heat(x,t)=  \frac{1}{(4\pi t)^{\frac{n}{2}}} e^{-\frac{|x|^2}{4t}} \Chi{(0,\infty)}{t}. 
\end{equation*}
 We also define the   heat potential of a measure $\nu\in \mathfrak{M}^+(\R^{n+1})$ by
\begin{equation*}
\Heat[\nu](x,t)= [\Heat\ast \nu](x,t)=\int_{\R^{n+1}}\Heat(x-y,t-s) d \nu(y,s)   
\end{equation*}
for any $(x,t)\in \R^{n+1}$.

With these, we can now define, respectively, the Riesz, the parabolic Riesz, and the heat capacities as follows:
$$ \Rcap{\alpha}{p}(K)=\inf\left \{\LpNorm{f}{p}{\R^n}^p:\, f\geq 0,\, \, \mathbf{I}_{\alpha}[f]\geq 1\,\, \mathrm{on}\,\, K \right\}   $$
for any set $K\subset\R^n$, where $\alpha\in(0,n)$ and $1<p<\frac{n}{\alpha}$,
and 
$$ {\rm Cap}_{\mathbb{I}_\beta, p_1 } (E)=\inf \left\{\LpNorm{f}{p_1}{\R^{n+1}}^{p_1}:\, f\geq 0,\, \, \mathbb{I}_{\beta}[f]\geq 1\,\, \mathrm{on}\,\, E \right\},   $$
\begin{align*}
 \mathrm{Cap}_{\mathsf{H},  p_2}  (E)=\inf \left\{\LpNorm{f}{p_2}{\R^{n+1}}^{p_2}:\, f\geq 0,\, \, \Heat[f]\geq 1\,\, \mathrm{on}\,\, E \right \}  
\end{align*}
for any set $E\subset\R^{n+1}$, where $\beta\in (0,n+2)$, $1<p_1<\frac{n+2}{\beta}$, and $1<p_2<\frac{n+2}{2}$.

We next introduce  the following  variational  homogeneous parabolic capacities.  For $p>1$ and for each compact set $E\subset\R^{n+1}$, we define 
$${\rm cap}_{2,1, p}(E) =\inf \left\{ \int_{\R^{n+1}} (|\partial_t \Phi|^{p} + |D^2 \Phi|^{p}) dx dt \right\}, $$
where the infimum is taken over all functions $\Phi\in C_c^\infty(\R^{n+1}), 0\leq \Phi\leq 1$, and  $\Phi \geq 1$   in a neighborhood of $E$ (hence, $\Phi = 1$   in a neighborhood of $E$). Similarly,
 we define
\begin{align*}
{\rm c}_{2,1, p}(E) =\inf \left\{ \int_{\R^{n+1}} (|\partial_t \Phi|^{p} + |D^2 \Phi|^{p}) dx dt \right\}, 
\end{align*}
where the infimum is taken over all  $\Phi\in C_c^\infty(\R^{n+1})$ such that  $\Phi \geq 1$ on $E$.

We will need the following parabolic Hedberg type  lemma.
\begin{lemma}\label{Hed} Let $0<\alpha<\beta<n+2$. Then for any  $\mu\in \mathfrak{M}^+(\R^{n+1})$ we have 
\begin{align*}
\mathbb{I}_{\alpha}[\mu](x,t) \leq c\, \mathbb{M}_{\beta}\mu(x,t)^{\alpha/\beta} \mathbb{M} \mu(x,t)^{1-\alpha/\beta} \leq c\, \mathbb{I}_{\beta}[\mu](x,t)^{\alpha/\beta} \mathbb{M} \mu(x,t)^{1-\alpha/\beta}  
\end{align*}
for all $(x,t)\in \R^{n+1}$, where 
$$\mathbb{M}_{\beta}\mu(x,t)=\sup_{r>0}\frac{\mu(Q_r(x, t))}{r^{n+2-\beta}}, \qquad 0\leq \beta<n+2,$$ 
and  $\mathbb{M}\mu(x,t)=\mathbb{M}_{0}\mu(x,t)$ (the parabolic Hardy-Littlewood maximal function of $\mu$). 
\end{lemma}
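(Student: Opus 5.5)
We follow the classical Hedberg splitting, adapted to parabolic cylinders. Fix $(x,t)$ and write, via the layer-cake representation recalled above,
\begin{equation*}
\mathbb{I}_{\alpha}[\mu](x,t)\simeq \int_0^\infty \frac{\mu(Q_r(x,t))}{r^{n+2-\alpha}}\frac{dr}{r} = \int_0^\delta \cdots + \int_\delta^\infty \cdots
\end{equation*}
for a parameter $\delta>0$ to be chosen.

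On the small scales we use $\mu(Q_r(x,t))\le |Q_r|\,\mathbb{M}\mu(x,t)\simeq r^{n+2}\mathbb{M}\mu(x,t)$. This gives
\begin{equation*}
\int_0^\delta r^{\alpha-1}\,dr\;\mathbb{M}\mu(x,t)\simeq \delta^{\alpha}\,\mathbb{M}\mu(x,t).
\end{equation*}
On the large scales we use $\mu(Q_r(x,t))\le r^{n+2-\beta}\,\mathbb{M}_\beta\mu(x,t)$. This gives
\begin{equation*}
\int_\delta^\infty r^{\alpha-\beta-1}\,dr\;\mathbb{M}_\beta\mu(x,t)\simeq \delta^{\alpha-\beta}\,\mathbb{M}_\beta\mu(x,t).
\end{equation*}
This step is where $\alpha<\beta$ is needed, to make the integral converge at infinity.

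Next we choose $\delta$ to balance the two terms, namely $\delta^{\beta}=\mathbb{M}_\beta\mu(x,t)/\mathbb{M}\mu(x,t)$. The sum then becomes $\mathbb{M}_\beta\mu^{\alpha/\beta}\,\mathbb{M}\mu^{1-\alpha/\beta}$. The degenerate cases need separate treatment:
\begin{itemize}
\item If $\mathbb{M}\mu(x,t)=\infty$ or $\mathbb{M}_\beta\mu(x,t)=\infty$ with the other factor nonzero, the inequality is trivial.
\item If $\mathbb{M}_\beta\mu(x,t)=0$ or $\mathbb{M}\mu(x,t)=0$, then $\mu=0$ near every scale at $(x,t)$, so $\mu\equiv 0$ and both sides vanish.
\item If $\mathbb{M}_\beta\mu(x,t)=\infty$ while $\mathbb{M}\mu(x,t)=0$, the same argument applies: $\mu\equiv 0$.
\end{itemize}
These cases follow by the convention $0\cdot\infty$ or directly.

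For the second inequality it suffices to show $\mathbb{M}_\beta\mu\lesssim \mathbb{I}_\beta[\mu]$ pointwise. For any $r>0$ we have $Q_r(x,t)\subset Q_s(x,t)$ for $s\in[r,2r]$. Hence
\begin{equation*}
\mathbb{I}_\beta[\mu](x,t)\gtrsim \int_r^{2r}\frac{\mu(Q_s)}{s^{n+2-\beta}}\frac{ds}{s}\gtrsim \frac{\mu(Q_r(x,t))}{r^{n+2-\beta}}.
\end{equation*}
Taking the supremum over $r$ gives the claim, and raising it to the power $\alpha/\beta>0$ finishes the proof.

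The only point needing care is the equivalence $\mathbb{I}_\beta[\mu]\simeq\int_0^\infty \mu(Q_r)r^{-(n+2-\beta)}\,dr/r$, which is stated above. It follows from comparing the kernel $(|t-s|+|x-y|^2)^{-(n+2-\beta)/2}$ with $\int \chi_{Q_r(x,t)}(y,s)\,r^{-(n+2-\beta)}\,dr/r$: the point $(y,s)$ lies in $Q_r(x,t)$ exactly when $r\gtrsim \max(|x-y|,|t-s|^{1/2})$. We take this equivalence as given, so no real obstacle remains.
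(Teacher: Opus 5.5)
Your proposal is correct and is essentially the paper's proof. Both split $\int_0^\infty \mu(Q_r(x,t))\,r^{-(n+2-\alpha)}\,dr/r$ at a scale $\delta$, bound the small scales by $\mathbb{M}\mu(x,t)\,\delta^{\alpha}$ and the large scales by $\mathbb{M}_\beta\mu(x,t)\,\delta^{\alpha-\beta}$, and choose $\delta^{\beta}=\mathbb{M}_\beta\mu(x,t)/\mathbb{M}\mu(x,t)$. You additionally spell out the degenerate cases and the pointwise bound $\mathbb{M}_\beta\mu\lesssim\mathbb{I}_\beta[\mu]$ via the annulus $[r,2r]$. The paper leaves both of these implicit: it simply assumes the two maximal functions are finite and says it suffices to prove the first inequality.
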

\begin{proof} It is enough to prove the first inequality.  For any $\delta>0$, we have 
\begin{align*}
\mathbb{I}_{\alpha}[\mu](x,t)& \lesssim  \int_0^{\delta}\frac{\mu(Q_r(x,t))}{r^{n+2-\alpha}} \frac{dr}{r} +  \int_{\delta}^{\infty} \frac{\mu(Q_r(x,t))}{r^{n+2-\alpha}} \frac{dr}{r} \\
& \lesssim \mathbb{M} \mu(x,t) \delta^{\alpha} +  \mathbb{M}_{\beta}[\mu](x,t) \delta^{\alpha-\beta}.
\end{align*}
We may assume that both $\mathbb{M} \mu(x,t)$ and $\mathbb{M}_{\beta}[\mu](x,t)$ are finite. Then choosing 
$$\delta=\left[\mathbb{M}_{\beta}[\mu](x,t)/\mathbb{M} \mu(x,t)\right]^{\frac{1}{\beta}},$$
 we obtain the lemma.
\end{proof}

The following equivalence of parabolic capacities will be important for our development later.

\begin{theorem}\label{thm:cap-equiv} Let $1<p<\frac{n+2}{2}$. There exist positive constants $C_1, C_2, C_3$, and $C_4$ such that 
\begin{align}\label{4cap}
\ {\rm c}_{2,1, p}(E) &\leq C_1\, {\rm cap}_{2,1, p}(E) \leq C_2\, \PRcap{2}{p}(E)\\
& \leq C_3\,   \mathrm{Cap}_{\mathsf{H},  p}(E) \leq C_4 \, {\rm c}_{2,1, p}(E) \nonumber
\end{align}
for all compact sets $E\subset\R^{n+1}$.
\end{theorem}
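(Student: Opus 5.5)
We prove the four inequalities of \eqref{4cap} one at a time, going around the cycle.

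\emph{First inequality} ${\rm c}_{2,1,p}\le C_1{\rm cap}_{2,1,p}$. This is immediate with $C_1=1$. Every admissible $\Phi$ for ${\rm cap}_{2,1,p}(E)$ satisfies $0\le\Phi\le1$ and $\Phi=1$ near $E$, so in particular $\Phi\ge1$ on $E$, and $\Phi$ is therefore admissible for ${\rm c}_{2,1,p}(E)$.

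\emph{Second inequality} ${\rm cap}_{2,1,p}\lesssim\PRcap{2}{p}$. Take $f\ge0$, $f\in L^p(\R^{n+1})$, with $\mathbb{I}_2[f]\ge1$ on $E$. By a standard approximation we may assume $f$ is smooth with compact support and that $\mathbb{I}_2[f]>1$ on a neighborhood of $E$; we use lower semicontinuity of $\mathbb{I}_2[f]$, replace $f$ by $(1+\delta)f$, and then mollify. Next we replace the symmetric kernel by the heat kernel. Since $\Heat(x,t)\lesssim (|t|+|x|^2)^{-n/2}$ we have $\Heat[f]\lesssim \mathbb{I}_2[f]$; for the reverse direction we use the adjoint heat potential $\Heat^*[f]$ (backward in time), and $(|t|+|x|^2)^{-n/2}\lesssim \sum_{\pm}$ of suitable dilates of $\Heat$ and $\Heat^*$. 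This gives $\mathbb{I}_2[f]\lesssim \Heat[\tilde f]+\Heat^*[\tilde f]$ with $\|\tilde f\|_p\lesssim\|f\|_p$, where $\tilde f$ is obtained from $f$ by a parabolic dilation. Set $u=\Heat[\tilde f]+\Heat^*[\tilde f]$. Parabolic Calder\'on--Zygmund theory (maximal $L^p$ regularity for the heat and backward heat operators) gives $\|\partial_t u\|_p+\|D^2u\|_p\lesssim\|f\|_p$. Now let $\Phi=\theta(u)$, where $\theta$ is a smooth function with $\theta(s)=1$ for $s\ge c$, $\theta(0)=0$ and $0\le\theta\le1$, and multiply by a large-scale cutoff $\eta_R$ to obtain compact support. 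The delicate term is $D^2\Phi=\theta'(u)D^2u+\theta''(u)\nabla u\otimes\nabla u$. We control $\|\nabla u\|_{2p}^2$ through $\|u\|_\infty\|D^2u\|_p$, which requires the bound $u\le 1$; to get it, first replace $u$ by $\min(u,1)$ smoothly. The standard remedy, used by Adams--Hedberg in the elliptic case, is a Maz'ya-type truncation: one takes $\theta(s)=s^{\gamma}$-type functions, or uses the fact that for $p<(n+2)/2$ capacities defined by $\mathbb{I}_2$ are comparable with those of smooth truncations, so the quadratic gradient term is absorbed via the Gagliardo--Nirenberg inequality $\|\nabla u\|_{2p}^2\lesssim\|u\|_\infty\|D^2u\|_p$ applied to the truncated function. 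Finally, the cutoff $\eta_R$ contributes errors of size $R^{-2}\|u\|_{L^p(B_{2R})}$, and these vanish as $R\to\infty$ because $u\in L^{p^*}$ with $p^*=\frac{(n+2)p}{n+2-2p}$ by the parabolic Sobolev embedding. The condition $p<(n+2)/2$ is used exactly here.

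\emph{Third inequality} $\PRcap{2}{p}\lesssim\mathrm{Cap}_{\mathsf H,p}$. This follows from the pointwise bound $\Heat(x,t)\le C(|t|+|x|^2)^{-n/2}$: if $\Heat[f]\ge1$ on $E$, then $\mathbb{I}_2[Cf]\ge1$ on $E$.

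\emph{Fourth inequality} $\mathrm{Cap}_{\mathsf H,p}\lesssim{\rm c}_{2,1,p}$. Given $\Phi\in C_c^\infty$ with $\Phi\ge1$ on $E$, we have $\Phi=\Heat[(\partial_t-\Delta)\Phi]$, so $\Phi\le \Heat[g]$ with $g=|\partial_t\Phi-\Delta\Phi|$. Moreover $\|g\|_p^p\lesssim\int(|\partial_t\Phi|^p+|D^2\Phi|^p)$, and hence $\mathrm{Cap}_{\mathsf H,p}(E)\le\|g\|_p^p$.

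The main obstacle is the second inequality, namely building a $[0,1]$-valued test function from a potential while controlling the nonlinear term in $D^2\theta(u)$ and handling the asymmetric (two-sided in time) kernel. The plan there is to imitate the Adams--Hedberg proof of the equivalence of Bessel capacity and the variational capacity in the elliptic case, with parabolic singular integrals replacing the elliptic ones.
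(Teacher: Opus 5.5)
Your arguments for the first, third and fourth inequalities are exactly the paper's. The second inequality, ${\rm cap}_{2,1,p}\lesssim\PRcap{2}{p}$, is where the real work lies, and there your proposal contains a false step and leaves the key estimate unproved.

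\textbf{The false step.} You claim $(|t|+|x|^2)^{-n/2}\lesssim$ sums of parabolic dilates of $\Heat$ and $\Heat^*$. This cannot hold. Since $\lambda^n\Heat(\lambda x,\lambda^2 t)=\Heat(x,t)$, such sums are just multiples of $\Heat$ and $\Heat^*$. These vanish faster than any power of $|t|$ as $t\to0$ at fixed $x\neq0$, whereas $(|t|+|x|^2)^{-n/2}\approx|x|^{-n}$ there. So the pointwise bound $\mathbb{I}_2[f]\lesssim\Heat[\tilde f]+\Heat^*[\tilde f]$ fails. The detour is also unnecessary: $\partial_t\mathbb{I}_2$ and $D_{ij}\mathbb{I}_2$ are themselves parabolic Calder\'on--Zygmund operators, and the paper simply takes $u=\mathbb{I}_2[\varphi_\epsilon*f]$.

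\textbf{The real gap: the term $\theta''(u)\nabla u\otimes\nabla u$ is never controlled.}
\begin{itemize}
\item The Gagliardo--Nirenberg bound $\|\nabla u\|_{2p}^2\lesssim\|u\|_\infty\|D^2u\|_p$ needs $u\in L^\infty$. This fails for potentials of $L^p$ functions with $p<\frac{n+2}{2}$.
\item ``First replacing $u$ by $\min(u,1)$ smoothly'' is circular: the Hessian of the smooth truncation contains exactly the same quadratic term.
\item Appealing to the comparability of $\mathbb{I}_2$-capacities with smooth truncations assumes what is to be proved.
\end{itemize}

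\textbf{The missing idea.} You need a pointwise interpolation that controls $|\nabla u|^2/u$ rather than $|\nabla u|^2$. Take $T$ smooth, $0\le T\le 1$, with $T=0$ on $(-\infty,\frac12]$ and $T=1$ on $[1,\infty)$, so that $s|T''(s)|\le c$. Then combine $|D_i\mathbb{I}_2[g]|\lesssim\mathbb{I}_1[g]$ with Lemma \ref{Hed} (take $\alpha=1$, $\beta=2$), which gives $\mathbb{I}_1[g]^2\lesssim\mathbb{I}_2[g]\,\mathbb{M}g$. Together these yield, pointwise with $g=\varphi_\epsilon*f$,
\[
|T''(u)|\,|\nabla u|^2\lesssim\mathbb{M}g,
\]
and $\mathbb{M}$ is bounded on $L^p$. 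Another way to close this step would be the Maz'ya-type inequality $\int_{\R^n}|\nabla v|^{2p}v^{-p}\,dx\lesssim\int_{\R^n}|D^2v|^p\,dx$ for nonnegative $v$, applied on time slices. It follows by one integration by parts against $v^{1-p}$ and H\"older.

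\textbf{Compact support.} The paper needs no cutoff $\eta_R$. Using $p<\frac{n+2}{2}$ and H\"older, $f$ is replaced by $2f\upchi_{Q_R(0)}$. Then $\mathbb{I}_2[\varphi_\epsilon*f]$ tends to $0$ at infinity, so $T(\mathbb{I}_2[\varphi_\epsilon*f])$ is automatically compactly supported.
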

\begin{proof}
The first inequality in \eqref{4cap} is trivial and the bound $$\PRcap{2}{p}(E) \leq c \,  \mathrm{Cap}_{\mathsf{H},  p}(E) $$ is also obvious  since 
$\Heat(x,t)\leq c \left(|t| +|x|^{2}\right )^{-n/2}$ on $\R^{n+1}\setminus(0,0)$.

To prove the last inequality, let  $\Phi\in C_c^\infty(\R^{n+1})$ be such that   $\Phi\geq 1$  on a compact set  $E\subset\R^{n+1}$. Note that  
$$ \Phi=\Heat*(\partial_t \Phi -\Delta \Phi) \leq \Heat*(|\partial_t \Phi -\Delta \Phi|),$$
and thus 
$$ \mathrm{Cap}_{\mathsf{H},  p}(E) \leq \int_{\R^{n+1}} |\partial_t \Phi -\Delta \Phi|^p dx dt \lesssim \int_{\R^{n+1}} (|\partial_t \Phi|^p + | D^2 \Phi|^p) dx dt.$$
Minimizing the right-hand side above over all such $\Phi$, we obtain  
$$ \mathrm{Cap}_{\mathsf{H},  p}(E) \lesssim \, {\rm c}_{2,1, p}(E).$$

It remains to show the second inequality in \eqref{4cap}.   Since $\PRcap{2}{p}$ is an outer capacity, it suffices to show that 
\begin{equation}\label{m-cap}
{\rm cap}_{2,1, p}(E) \lesssim\, \PRcap{2}{p}(O)
\end{equation}
for any bounded open set $O \supset E$, where $c$ is independent of $E$ and $O$. To this end, we shall use  the smooth truncation method for parabolic potentials 
(see \cite{Ad76} for the elliptic case; see also \cite{Maz}).

 Let  $f\in L^p(\R^{n+1}), f\geq 0$, be such that 
$\mathbb{I}_2[f] \geq 1$ on $O$, where $O$ is a bounded open set containing $E$.  Note that by H\"older's inequality and the fact that $1<p<\frac{n+2}{2}$, we have 
$$\int_{\R^{n+1}\setminus Q_{R}(0)} \frac{f(y,s)}{(|t-s|+|x-y|^2)^{n/2}} dyds\leq \frac12$$
 for all $(x,t)\in O$, provided $R>0$ is sufficiently large. 
  Thus by replacing $f$ with $2f\upchi_{Q_R(0)}$ for a sufficiently large $R$, we may assume that $f$ has compact support.

Let  $\mathbb{B}_r(z)$ denote the  Euclidean ball centered at $z$ with radius $r>0$ in $\R^{n+1}$.
 Let $\varphi$ be a standard mollifier in $\R^{n+1}$, i.e., $\varphi\in C_c^\infty(\mathbb{B}_{1}(0))$, $\varphi$ is nonnegative and radial, and 
$\int_{\R^{n+1}} \varphi dxdt=1$.  For each $\epsilon>0$, let $\varphi_{\epsilon}(x,t)=\frac{1}{\epsilon^{n+1}} \varphi(x/\epsilon,t/\epsilon)$.
Then, provided $\varepsilon$ is small, we have 
$$\varphi_\epsilon*\mathbb{I}_2[f]=\mathbb{I}_2[\varphi_\epsilon*f]\geq 1$$
in $O'$ for some bounded open set $O'$ such that $E\subset O'\subset \overline{O'}\subset O$.

Let $T\in C^\infty(\R, [0, \infty))$ be such that $T(s)=0$ for $s\leq \frac12$ and $T(s)=1$ for $s\geq 1$.   Set 
$$\Phi_\epsilon = T(\mathbb{I}_2[\varphi_\epsilon*f]).$$
Then we have 
$$\partial_t \Phi_\epsilon = T'(\mathbb{I}_2[\varphi_\epsilon*f]) \partial_t\mathbb{I}_2[\varphi_\epsilon*f]$$
and for any $i, j\in \{1,2, \dots, n\}$,
$$D_{ij} \Phi_\epsilon =T''(\mathbb{I}_2[\varphi_\epsilon*f])D_i\mathbb{I}_2[\varphi_\epsilon*f] D_j\mathbb{I}_2[\varphi_\epsilon*f] + T'(\mathbb{I}_2[\varphi_\epsilon*f])D_{ij}\mathbb{I}_2[\varphi_\epsilon*f].$$
Thus it follows from Lemma \ref{Hed} that 
\begin{align*}
|\partial_t \Phi_\epsilon| + |D_{ij} \Phi_\epsilon| &\lesssim |\partial_t\mathbb{I}_2[\varphi_\epsilon*f]|  +   |D_{ij}\mathbb{I}_2[\varphi_\epsilon*f]| + \mathbb{I}_2[\varphi_\epsilon*f]^{-1} \mathbb{I}_1[\varphi_\epsilon*f]^2\\
& \lesssim |\partial_t\mathbb{I}_2[\varphi_\epsilon*f]|  +  |D_{ij}\mathbb{I}_2[\varphi_\epsilon*f]| +   \mathbb{M}(\varphi_\epsilon*f),
\end{align*}
where in the first inequality we used the fact that $s T''(s)\leq c$ for all $s\geq 0$ and $|D_i\mathbb{I}_2[\varphi_\epsilon*f]| \lesssim |\mathbb{I}_1[\varphi_\epsilon*f]|$ for any $i\in\{1,2, \dots, n\}$.
On the other hand, observe that 
 $$\partial_t\mathbb{I}_2[\varphi_\epsilon*f]= K_0 *(\varphi_\epsilon*f)\quad {\rm and} \quad D_{ij}\mathbb{I}_2[\varphi_\epsilon*f] =K_1 *(\varphi_\epsilon*f)$$
are Calder\'on--Zygmund operators acting on $\varphi_\epsilon*f$.  The boundedness of  the operators with kernels $K_0$, $K_1$ and of $\mathbb{M}$ on $L^p(\R^{n+1})$ now yields
\begin{equation*}
\int_{\R^{n+1}} |\partial_t \Phi_\epsilon|^p + |D_{ij} \Phi_\epsilon|^p dxdt \lesssim \int_{\R^{n+1}} (\varphi_\epsilon* f)^p dxdt \lesssim \int_{\R^{n+1}} f^p dxdt
\end{equation*} 

Note that $0\leq \Phi_\epsilon\leq 1$, $\Phi_\epsilon = 1$ in $O'\supset E$, and  $\Phi_\epsilon\in C_c^\infty(\R^{n+1})$ since $f$ has compact support.
We thus have 
\begin{equation*}
{\rm cap}_{2,1, p}(E) \leq  \int_{\R^{n+1}} |\partial_t \Phi_\epsilon|^p + |D^2 \Phi_\epsilon|^p dxdt \lesssim \int_{\R^{n+1}}f^p dxdt.
\end{equation*}
Minimizing the right-hand side over all such $f$, we obtain   \eqref{m-cap} as desired.
\end{proof}

\begin{remark}
The equivalence of $\PRcap{2}{p}$ and $\mathrm{Cap}_{\mathsf{H}, p}$ was obtained earlier by  Q.-H. Nguyen in \textnormal{\cite{Q-H}} (see Proposition 2.8 and the proof of Corollary 2.10 in \textnormal{\cite{Q-H})}.
We mention here that the proof given in \textnormal{\cite{Q-H}} is based on the following pointwise bounds
 \begin{align*}
 \, \int_0^\infty\frac{\nu(B_r(x)\times (t-r^2,t-\frac{r^2}{4}))}{r^{n}}\frac{dr}{r}\lesssim\Heat[\nu](x,t)\lesssim\,  \PR{2}[\nu](x,t)    
 \end{align*}
for any $\nu\in \mathfrak{M}^{+}(\R^{n+1})$ and $(x,t)\in\R^{n+1}$.
\end{remark}

We record the following important characterizations of parabolic trace inequality that can be found in \cite[Theorem 4.36]{Q-H};  see also \cite{MV}  for the elliptic case.
\begin{proposition}\label{prop:equiv}
Let $\nu\in \mathfrak{M}^+(\R^{n+1})$ and $1<p<\frac{n+2}{\beta}$. The following statements are equivalent:  
\begin{itemize}
    \item For every function $f\in L^p(\R^{n+1}), f\geq 0,$
    \begin{equation*}
    \int_{\R^{n+1}} \mathbb{I}_\beta[f]^{p} d\nu \leq C_1 \int_{\R^{n+1}} f^p dx dt 
    \end{equation*}
    \item For every compact set $E\subset \R^{n+1}$,
    \begin{equation*}
    \nu(E)\leq C_2 {\rm Cap}_{\mathbb{I}_\beta, p}(E).    
    \end{equation*}
\item For every compact set $E\subset \R^{n+1}$,
    \begin{equation*}
    \int_E \PR{\beta}[\nu]^{p'}dxdt\leq C_3 {\rm Cap}_{\mathbb{I}_\beta, p}(E), \quad p'=\frac{p}{p-1}.    
    \end{equation*}
\item  $\PR{\beta}[\nu]< +\infty$ a.e. and 
\begin{equation*}
\PR{\beta}[(\PR{\beta}[\nu])^{p'}] \leq   C_4\PR{\beta}[\nu]\quad \mbox{a.e.} 
\end{equation*}
\end{itemize}
Here  $C_1,C_2, C_3$, and $C_4$ are positive constants independent of $\nu$ and $E$. Moreover, 
$$C_1\simeq C_2\simeq (C_3)^{\frac{p-1}{p}} \simeq (C_4)^{p-1}.$$
\end{proposition}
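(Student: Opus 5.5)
The plan is to prove the cycle of implications (i)$\Rightarrow$(ii)$\Rightarrow$(iv)$\Rightarrow$(i), and to treat (iii) separately by showing (ii)$\Leftrightarrow$(iii). This is the parabolic analogue of the Maz'ya--Verbitsky theory, so I would follow their elliptic argument. The only structural inputs it needs are: the kernel of $\PR{\beta}$ is a positive, radially decreasing function of the parabolic quasi-distance $\rho((x,t),(y,s))=|t-s|^{1/2}+|x-y|$, which satisfies a quasi-triangle inequality; and $\R^{n+1}$ with Lebesgue measure is a space of homogeneous type with respect to the cylinders $Q_r$.

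\textbf{(i)$\Rightarrow$(ii).} Fix a compact $E$ and take any admissible $f\ge 0$ with $\PR{\beta}[f]\ge 1$ on $E$. Then
$$\nu(E)\le\int \PR{\beta}[f]^p\,d\nu\le C_1\|f\|_p^p,$$
and taking the infimum over such $f$ gives (ii) with $C_2=C_1$.

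\textbf{(ii)$\Rightarrow$(iii).} Use the capacitary strong-type inequality, obtained as usual from the dual (Wolff) characterization of capacity. Restrict $\nu$ to $E$ and apply (ii) to the superlevel sets $\{\PR{\beta}[\nu_E]>\lambda\}\cap E$. I expect a bound of the form $\int_E\PR{\beta}[\nu_E]^{p'}\,dx\,dt\lesssim C_2^{p'-1}\nu(E)$, and hence $\lesssim C_2^{p'}{\rm Cap}(E)$. Controlling the far part $\PR{\beta}[\nu\upchi_{E^c}]$ on $E$ is harder; there I would follow Maz'ya--Verbitsky via the Wolff-type inequality.

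\textbf{(iii)$\Rightarrow$(iv).} Since the kernel is a decreasing function of $\rho$, it suffices to work with dyadic cylinders. By the quasi-triangle inequality, $\PR{\beta}[\nu]$ is essentially constant on cylinders away from where most of $\nu$ sits. I would show $\PR{\beta}[(\PR{\beta}[\nu])^{p'}](z)\lesssim \PR{\beta}[\nu](z)$ by splitting $\PR{\beta}[\nu]^{p'}$ over the cylinders $Q_r(z)$. On each cylinder, (iii) with $E=\overline{Q_r}$ together with ${\rm Cap}(Q_r)\simeq r^{n+2-\beta p}$ gives $\int_{Q_r}\PR{\beta}[\nu]^{p'}\lesssim r^{n+2-\beta p}$ times the local mass ratio. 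Summing these in $dr/r$ reproduces $\PR{\beta}[\nu](z)$. Finiteness of $\PR{\beta}[\nu]$ a.e.\ also comes from (iii) on cylinders.

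\textbf{(iv)$\Rightarrow$(i).} This is the main obstacle. I would use the Verbitsky/Schur-test argument. Set $w=\PR{\beta}[\nu]^{p'}$ (or $w=\PR{\beta}[\nu]$). Condition (iv) gives the weighted pointwise bound $\PR{\beta}[w]\lesssim \PR{\beta}[\nu]$, and Schur's lemma for the positive operator $f\mapsto\PR{\beta}[f]$ from $L^p(dx\,dt)$ to $L^p(d\nu)$, with test function $h=\PR{\beta}[\nu]^{p'-1}$, should then yield (i). Concretely, Hölder's inequality gives
$$\PR{\beta}[f]^p\le \PR{\beta}[f^p h^{-p/p'}\cdot\,]\,\PR{\beta}[h]^{p-1}.$$
Integrating against $\nu$ and using Fubini, the right-hand side is bounded by $\int f^p h^{1-p}\,\PR{\beta}[\nu]\,dx\,dt$. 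With the exponents arranged correctly, this is $\lesssim \|f\|_p^p$, with $C_1\lesssim C_4^{p-1}$.

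The delicate point is tracking the constants so that $C_1\simeq C_2\simeq C_3^{(p-1)/p}\simeq C_4^{p-1}$. If the direct Schur test fails, the fallback is a dyadic discretization of $\PR{\beta}$ over parabolic dyadic cubes, as in Maz'ya--Verbitsky or as in \cite{Q-H}, where the pointwise condition becomes a Carleson-type condition.
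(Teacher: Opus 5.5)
The paper does not prove this proposition; it quotes it from \cite[Theorem 4.36]{Q-H}, the parabolic counterpart of the Maz'ya--Verbitsky theorem \cite{MV}. Your step (i)$\Rightarrow$(ii) is correct with $C_2=C_1$. However, none of the substantive implications is established, and the one you single out, (iv)$\Rightarrow$(i), fails as sketched.

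With $h=\mathbb{I}_\beta[\nu]^{p'-1}$, your H\"older--Fubini step requires $\mathbb{I}_\beta\big[(\mathbb{I}_\beta[h])^{p-1}\nu\big]\lesssim \mathbb{I}_\beta[\nu]$. But (iv) only controls $\mathbb{I}_\beta$ applied to $\mathbb{I}_\beta[\nu]^{p'}$. Concretely, let $\rho(x,t)=|x|+|t|^{1/2}$ and $d\nu=\rho^{-\beta p}\,dx\,dt$. By parabolic homogeneity $\mathbb{I}_\beta[\nu]\simeq\rho^{-\beta(p-1)}$, so $\mathbb{I}_\beta[\nu]^{p'}\simeq\rho^{-\beta p}$ and (iv) holds; (i) is the parabolic Hardy--Stein--Weiss inequality. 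Yet $h\simeq\rho^{-\beta}$ and $\mathbb{I}_\beta[\rho^{-\beta}]\equiv+\infty$ (logarithmic divergence at infinity), so your bound is vacuous.

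Suppose instead you take the weights that (iv) does control: $\psi=\mathbb{I}_\beta[\nu]$ on the Lebesgue side and $\phi=\mathbb{I}_\beta[\nu]^{1/p'}$ on the $\nu$ side. The first Schur condition is then exactly (iv), but the second becomes $\mathbb{I}_\beta\big[\mathbb{I}_\beta[\nu]^{p-1}\nu\big]\lesssim\mathbb{I}_\beta[\nu]^{p}$. This is not implied by (iv). For the same $\nu$ it is identically $+\infty$ whenever $\beta(p^2-p+1)\ge n+2$, because $\mathbb{I}_\beta[\nu]^{p-1}\nu\simeq\rho^{-\beta(p^2-p+1)}$ is then not locally integrable at the origin. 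This range is nonempty inside $\beta p<n+2$, e.g.\ $n=1$, $\beta=2$, $p$ close to $3/2$. So a Schur test with weights controlled by (iv) does not close. This implication needs genuinely nonlinear machinery: capacitary measures and Wolff potentials as in \cite{MV}, or the Kalton--Verbitsky theory for quasi-metric kernels \cite{KV}. Your fallback only names these.

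The other two links are also missing. In (iii)$\Rightarrow$(iv), hypothesis (iii) on cylinders gives only the Morrey-type bound $\int_{Q_r(z)}\mathbb{I}_\beta[\nu]^{p'}\lesssim C_3\,r^{n+2-\beta p}$. Inserting this into $\mathbb{I}_\beta\big[\mathbb{I}_\beta[\nu]^{p'}\big](z)\simeq\int_0^\infty r^{-(n+2-\beta)}\big(\int_{Q_r(z)}\mathbb{I}_\beta[\nu]^{p'}\big)\frac{dr}{r}$ yields $\int_0^\infty r^{-\beta(p-1)}\frac{dr}{r}=+\infty$.

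What you call the local mass ratio is the estimate $\int_{\R^{n+1}}\mathbb{I}_\beta[\nu\upchi_{Q}]^{p'}\lesssim C_1^{p'-1}\nu(Q)$ for $Q=Q_{cr}(z)$. That is the dual form of (i), not something (iii) on cylinders provides. Combined with the tail bound from $\nu(Q_s)\lesssim s^{n+2-\beta p}$ and an integration by parts in $r$, it proves (i)$\Rightarrow$(iv), so your cycle has to be reorganized.

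Likewise, in (ii)$\Rightarrow$(iii) the term $\mathbb{I}_\beta[\nu\upchi_{E^c}]$ on $E$ is the whole difficulty, and you leave it open. For a general compact $E$ nothing separates $E$ from ${\rm supp}\,\nu\setminus E$, so there is no genuine far part.

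Once (i)$\Leftrightarrow$(iv) is known, (iv)$\Rightarrow$(iii) is short. The measure $\omega=\mathbb{I}_\beta[\nu]^{p'}dx\,dt$ satisfies $\mathbb{I}_\beta\big[\mathbb{I}_\beta[\omega]^{p'}\big]\le C_4^{p'}\mathbb{I}_\beta[\omega]$. So $\omega$ satisfies (iv), hence (ii), and (ii) for $\omega$ is exactly (iii) with $C_3\simeq C_4^{p}$. A route back from (iii) still has to be supplied.
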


\subsection{Triebel-Lizorkin and Besov type spaces}
Let $\upchi \in C^{\infty}_{c}(\R^n)$, $0\leq \upchi\leq 1$,  be such that $ \upchi(\xi)=1$ for $|\xi|\leq 1$ and $\upchi(\xi)=0$ for  $|\xi|\geq 2$. For $j\in \mathbb{Z}$, define 
\begin{equation}\label{varphij}
\varphi_{j}(\xi)=\varphi(2^{-j}\xi),  {\rm ~where~} 
\varphi(\xi)=\upchi(\xi)-\upchi(2\xi), 
 \end{equation}
 so that
\begin{align*}
\sum_{j\in \mathbb Z}\varphi_j(\xi)=1,\quad \forall \,\xi \neq 0.
\end{align*}

The homogeneous Littlewood-Paley projection  is the Fourier multiplier operator
\begin{equation*}
\Dyadic f   =\varphi_j(D)f=\mathcal{F}^{-1}(\varphi(2^{-j}\cdot)\mathcal{F}f)  =2^{n j}f*\mathcal{F}^{-1}(\varphi) (2^j \cdot) 
\end{equation*}
defined for  $f\in \mathcal{S}'(\R^n)$,
 where $\mathcal{F}$ and $\mathcal{F}^{-1}$ denote the Fourier and inverse Fourier transforms, respectively,  with $$\mathcal{F}h(\xi):=\int_{\R^n} h(x)e^{- 2\pi i x\cdot\xi} dx, \qquad \forall h\in \mathcal{S}(\R^n). $$ 
 
 Note that $\Dyadic f =\Dyadic (f+P)$ 
 for any polynomial $P$ in $\R^n$ and in general we have 
$$f= \sum_{j\in \mathbb{Z}} \Dyadic f \quad {\rm in~} \mathcal{S}'(\R^n)/\mathcal{P},$$
where $\mathcal{S}'(\R^n)/\mathcal{P}$ is the space of tempered distribitions modulo polynomials in $\R^n$ (see , e.g., \cite[Proposition 3.1]{LR}). On the other hand,  
 for any $N\in \mathbb{Z}$ and $f\in \mathcal{S}'(\R^n)$, we have 
$$f=\mathcal{F}^{-1}(\upchi(2^{-N}\cdot)\mathcal{F}f) + \sum_{j=N}^\infty \Dyadic f \quad {\rm in~} \mathcal{S}'(\R^n).$$
 Moreover, if $\lim_{N\rightarrow - \infty}\mathcal{F}^{-1}(\upchi(2^{-N}\cdot)\mathcal{F}f)=0$ in $\mathcal{S}'(\R^n)$, then it holds  (see, e.g.,  \cite[Theorem 3.1]{LR})
 that 
$$f= \sum_{j\in \mathbb{Z}} \Dyadic f \quad {\rm in~} \mathcal{S}'(\R^n).$$

\begin{definition}
Let $s\in \R$ and $p, q \in (0,\infty]$. Define
\begin{align*}
 \|f\|_{\dot{F}^s_{p, q}(\R^n)}:= \big\|\big(\sum_{j\in \mathbb{Z}}\big[2^{js}\Dyadic f\big]^q\big)^{1/q}\big\|_{L^p},\quad q\in (0, \infty], \, p\in (0,\infty)\footnotemark[1],  \\
 \|f\|_{\dot{B}^s_{p, q}(\R^n)}:= \big(\sum_{j\in \mathbb{Z}}\big[2^{js}\|\Dyadic f\|_{L^p}\big]^q\big)^{1/q},\quad q\in (0, \infty], \, p\in (0,\infty], 
\end{align*}    
\footnotetext[1]{{We omit the limiting case $p=\infty$ which requires a different definition.}}where in the case $q=\infty$, the $\ell^q(\mathbb Z)$ norms  above must be replaced by the $\ell^\infty(\mathbb Z)$ norms.
 The homogeneous  Triebel-Lizorkin space $\dot{F}^{s}_{p,q}(\R^n)$  $($resp. Besov space $\dot{B}^{s}_{p,q}(\R^n)$$)$  is defined as the space of tempered distributions  modulo polynomials $f\in \Qs{}{}$  such that $\|f\|_{\dot{F}^s_{p,q}}<+\infty$ $($resp.  $\|f\|_{\dot{B}^s_{p,q}}<+\infty$$)$.
\end{definition}
It is easy to see from the definition that 
\begin{equation}\label{eq:Trie-Besov-emb}
\dot{B}^{s}_{p,\min(p,q)}(\R^n) \hookrightarrow \dot{F}^{s}_{p,q}(\R^n)  \hookrightarrow \dot{B}^{s}_{p,\max(p,q)}(\R^n).   
\end{equation}

More generally, we introduce the following definition.
\begin{definition}
Let $s\in \R$ and $ q \in (0,\infty]$. For any quasi-Banach function space $X$ on  $\R^n$ with quasi-normed $\|\cdot\|_{X}$, we  define
\begin{align*}
 \|f\|_{\dot{F}^s_{q}X(\R^n)}:= \big\|\big\{\sum_{j\in \mathbb{Z}}\big(2^{js}\Dyadic f\big)^q\big \}^{1/q}\big\|_{X},\quad q\in (0, \infty],  \\
 \|f\|_{\dot{B}^s_{q}X(\R^n)}:= \big\{\sum_{j\in \mathbb{Z}}\big (2^{js}\|\Dyadic f\|_{X}\big)^q\big\}^{1/q},\quad q\in (0, \infty]. 
\end{align*}    
 With this, we set
$$\dot{F}^s_{q}X(\R^n):=\{f\in \Qs{}{}:   \|f\|_{\dot{F}^s_{q}X(\R^n)}<+\infty\}$$
and 
$$\dot{B}^s_{q}X(\R^n):=\{f\in \Qs{}{}:   \|f\|_{\dot{B}^s_{q}X(\R^n)}<+\infty\}.$$
\end{definition}

In applications, one of our choices will be the weighted Lebesgue space $X=L^s(w)$, $s\in (0,\infty)$, where $w$ is an $A_\infty$ weight in $\R^n$. Recall that 
a nonnegative and locally integrable function $w$ is an  $A_\infty$ weight if $w$ is an $A_p$ weight for some $p > 1$, which means that 
$$[w]_{A_p}:= \left(\sup_{{\rm balls} \, B\subset\R^n} \fint_{B} w dy\right) \left(\sup_{B} \fint_{B} w^{\frac{1}{p-1}} dy\right)^{p-1} <+\infty $$
for some $p>1$. 
Moreover, it is well-known that the $A_p$ class, for any $p>1$, contains the $A_1$ class  which consists of nonnegative functions $w\in L^1_{\rm loc}(\R^n)$ such that 
$$ \sup_{r>0}\frac{1}{|B_r(x)|}\int_{B_r(x)} w(y)dy \leq C \, w(x)$$ 
for a.e. $x\in\R^n$. The best constant $C$ in the above inequality, denoted by $[w]_{A_1}$, is called the $A_1$ constant of $w$.

Another example is the Sobolev multiplier type space $X=\Ms{\alpha}{r}{p}(\R^n)$ that we now define.
\begin{definition}
 Let $p > 0$, $\alpha >0$,  and $r>1$ with $\alpha r <n$. The Sobolev multiplier type space $\Ms{\alpha}{r}{p}(\R^n)$ is 
the space of functions $f\in L^p_{\rm loc}(\mathbb{R}^n)$
such that the trace inequality 
\begin{align}\label{eq:trace-ineq}
	\left(\int_{\mathbb{R}^n} {\bf I}_\alpha[h]^r |f|^p dx\right)^{\frac{1}{p}} \leq C \left(\int_{\R^n} h^r dx\right)^{\frac1p} 
\end{align} 
holds for all $h\in L^r(\mathbb{R}^n), h\geq 0$. We equip $\Ms{\alpha}{r}{p}(\R^n)$ with the quasi-norm defined as the least possible constant $C$ in \eqref{eq:trace-ineq}.   
\end{definition}

In practice, it is not easy to work with \eqref{eq:trace-ineq}. The following characterizations  for the quasi-norm of $\Ms{\alpha}{r}{p}(\R^n)$ will be useful.
\begin{proposition}\label{W-char}
Let $p > 0$, $\alpha >0$,  and $r>1$ with $\alpha r <n$. Then have have 

\begin{equation}\label{TraceMAD} 
\|f\|_{\Ms{\alpha}{r}{p}(\R^n)} \simeq \sup_{K}\left(\frac{\int_{K}|f(x)|^{p}dx}{\Rcap{\alpha}{r}(K)}\right)^{1/p}, 
\end{equation}
where the supremum is taken over all compact sets $K\subset\R^n$;  and also,
\begin{equation}\label{OPW}
\|f\|_{\Ms{\alpha}{r}{p}(\R^n)} \simeq \sup_{w} \left(\int_{\R^n} |f|^{p} w dx\right)^{\frac1p}, 
\end{equation}
where the supremum is taken over  all $A_1$ weights $w$ in $\R^n$ such that  $w$ is quasi-continuous with respect to ${\rm Cap}_{{\bf I}_{\alpha}, r}$,  
$$\int_{0}^\infty {\rm Cap}_{{\bf I}_\alpha, r} (\{ w>t\}) dt \leq 1, $$
and $[w]_{A_1}\leq \overline{{\bf c}}(n,\alpha, r)$ for some constant $\overline{{\bf c}}(n,\alpha, r)>0$.
\end{proposition}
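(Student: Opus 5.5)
The first characterization \eqref{TraceMAD} is the elliptic analogue of the equivalence of the first two items in Proposition \ref{prop:equiv}, applied to the measure $d\mu=|f|^p\,dx$. We have $\|f\|^p_{\Ms{\alpha}{r}{p}}=\sup_h \int {\bf I}_\alpha[h]^r\,d\mu\big/\int h^r$, which is the best constant $C_1$ in the elliptic trace inequality with exponent $r$ and measure $\mu$. By the Maz'ya--Adams--Verbitsky theorem (the elliptic counterpart of Proposition \ref{prop:equiv}, cf. \cite{MV}), this constant is comparable to $\sup_K \mu(K)/\Rcap{\alpha}{r}(K)$, which gives \eqref{TraceMAD} after taking $p$-th roots.

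\emph{Lower bound in \eqref{TraceMAD}.} This direction is elementary. For a compact set $K$ and any admissible $h\ge0$ with ${\bf I}_\alpha[h]\ge1$ on $K$, we have $\mu(K)\le\int {\bf I}_\alpha[h]^r d\mu\le C^p\int h^r$. Taking the infimum over such $h$ gives $\mu(K)\le C^p\,\Rcap{\alpha}{r}(K)$.

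\emph{Upper bound in \eqref{TraceMAD}.} This is the capacitary strong type inequality. We write
$$\int {\bf I}_\alpha[h]^r d\mu = r\int_0^\infty \lambda^{r-1}\mu(\{{\bf I}_\alpha h>\lambda\})\,d\lambda \le A\, r\int_0^\infty\lambda^{r-1}\Rcap{\alpha}{r}(\{{\bf I}_\alpha h>\lambda\})\,d\lambda,$$
where the sets may be approximated by compact ones using inner regularity. We then invoke Adams' capacitary strong type estimate $\int_0^\infty \Rcap{\alpha}{r}(\{{\bf I}_\alpha h>\lambda\})\,d\lambda^r\lesssim \int h^r$. The main obstacle is that this estimate is deep; we will simply cite it (Adams--Hedberg / Maz'ya).

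\emph{Second characterization \eqref{OPW}.} For the lower bound "$\gtrsim$", take a weight $w$ as described. Writing $\int|f|^pw=\int_0^\infty \mu(\{w>t\})\,dt$, and using quasi-continuity so that each $\{w>t\}$ is capacitable up to sets of small capacity, we obtain $\int|f|^pw\le \sup_K\frac{\mu(K)}{\Rcap{\alpha}{r}(K)}\int_0^\infty\Rcap{\alpha}{r}(\{w>t\})\,dt\le \sup_K\frac{\mu(K)}{\Rcap{\alpha}{r}(K)}$. By \eqref{TraceMAD}, this gives "$\gtrsim$".

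For the upper bound "$\lesssim$", fix a compact set $K$ and let $V=V_K$ be its capacitary potential: $V={\bf I}_\alpha[(\mathbf{I}_\alpha\mu_K)^{r'-1}]$, where $\mu_K$ is the capacitary measure. Then $V\ge1$ quasi-everywhere on $K$, and $\int_0^\infty\Rcap{\alpha}{r}(\{V>t\})\,dt\lesssim \Rcap{\alpha}{r}(K)$ by the same strong type inequality. To obtain an $A_1$ weight we take $w=c\,V^{\delta}/\Rcap{\alpha}{r}(K)$ with a fixed $0<\delta<1$ small, since powers $({\bf I}_\alpha\nu)^\delta$ with $\delta<n/(n-\alpha)$ are $A_1$ with a constant depending only on $n,\alpha,\delta$. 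Potentials are quasi-continuous. Moreover $\{w>t\}=\{V>(t\,\Rcap{\alpha}{r}(K)/c)^{1/\delta}\}$, so the capacity integral is bounded by a constant times $\int_0^\infty \Rcap{\alpha}{r}(\{V>s\})\,s^{\delta-1}ds/\Rcap{\alpha}{r}(K)$. This is finite and $\lesssim1$: for $s\le1$ we use $\Rcap{\alpha}{r}(\{V>s\})\lesssim s^{-(r-1)}\Rcap{\alpha}{r}(K)$ (weak type), and for $s>1$ we use the maximum principle $V\lesssim1$.

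With this $w$ we get $\int|f|^pw\ge c\,\mu(K)/\Rcap{\alpha}{r}(K)$, and taking the supremum over $K$ together with \eqref{TraceMAD} concludes.

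The main delicate point is the choice of $\delta$ so that both the $A_1$ property and the convergence of the capacity integral near $s=0$ hold. The $s\le1$ bound uses $\mu_K$-decay, requiring $\delta-1-(r-1)>-1$, i.e. $\delta>r-1$, which may fail. If it does, I would instead truncate $w=\min(V,1)^\delta$ and use the nonlinear estimate $\Rcap{\alpha}{r}(\{V>s\})\lesssim s^{-1}\Rcap{\alpha}{r}(K)$ from the energy identity, adjusting $\delta$ accordingly. Citing Verbitsky's characterization in place of the $s\le1$ step is the fallback.
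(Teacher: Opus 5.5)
Your argument for \eqref{TraceMAD} is the classical Maz'ya--Adams one; the paper simply cites \cite{MS2, AH} for it. Your proof of $\sup_w\int|f|^p w\,dx\lesssim\|f\|^p_{\Ms{\alpha}{r}{p}}$ is also correct. Quasi-continuity is not even needed there: outer regularity of $\Rcap{\alpha}{r}$ and inner regularity of $\mu=|f|^p\,dx$ on open sets give $\mu(E)\le A\,\Rcap{\alpha}{r}(E)$ for every set $E$.

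\textbf{The gap is in the other half of \eqref{OPW}:} for a given compact $K$ you must construct an admissible weight that is nearly extremal. The paper does not prove this; it imports it from \cite[Theorem 9.2]{OP1} and \cite[Theorem 6.7]{OP2}. You claim that $\int_0^\infty\Rcap{\alpha}{r}(\{V>t\})\,dt\lesssim\Rcap{\alpha}{r}(K)$ follows ``by the same strong type inequality''. That inequality only controls $\int_0^\infty\Rcap{\alpha}{r}(\{V>t\})\,d(t^r)$. Since $V$ is bounded, this is the weaker quantity near $t=0$, which is exactly where the difficulty lies. For $r\ge2$ the claim is in fact false. Take $K=\overline{B_1(0)}$. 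Then $({\bf I}_\alpha\mu_K)^{r'-1}(y)\gtrsim|y|^{-(n-\alpha)/(r-1)}$ for large $|y|$, hence $V(x)\gtrsim|x|^{-(n-\alpha r)/(r-1)}$, and so $\Rcap{\alpha}{r}(\{V>s\})\gtrsim s^{1-r}$ as $s\to0$. Therefore $\int_0^1\Rcap{\alpha}{r}(\{V>s\})\,s^{\delta-1}\,ds=\infty$ for every $\delta\le r-1$. When $r\ge2$, no $\delta\le1$ works, so $w=cV^\delta/\Rcap{\alpha}{r}(K)$ is not admissible. Taking $\delta$ ``small'' only makes this worse.

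\textbf{The repairs you sketch do not close the gap.}
\begin{itemize}
\item Choosing $\delta>r-1\ge1$ loses the elementary $A_1$ bound. The Riesz-potential lemma needs $\delta<n/(n-\alpha)$, but $r-1<n/(n-\alpha)$ can fail for admissible $r<n/\alpha$ (e.g.\ $n=3$, $\alpha=\frac12$, $r=4$).
\item Your tail bound $\Rcap{\alpha}{r}(\{V>s\})\lesssim s^{1-r}\Rcap{\alpha}{r}(K)$ is not the weak-type estimate. The definition of capacity, with $\|({\bf I}_\alpha\mu_K)^{r'-1}\|_{L^r}^r=\Rcap{\alpha}{r}(K)$, gives only $s^{-r}$.
\item That tail bound is also false when $r<2-\alpha/n$. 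There $V(x)\gtrsim|x|^{\alpha-n}$ at infinity already gives $\Rcap{\alpha}{r}(\{V>s\})\gtrsim s^{-(n-\alpha r)/(n-\alpha)}$, a strictly larger power.
\item Truncating to $\min(V,1)$ changes only the large values of $V$, not the small-$s$ tail.
\item An $s^{-1}$ bound would still force $\delta>1$. Moreover, the energy argument yields it only for $r=2$, and it is false for $r>2$ by the example above.
\end{itemize}

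The underlying problem is that the nonlinear potential decays too slowly at infinity. What is missing is an $A_1$ majorant of $\upchi_K$, with uniformly bounded $A_1$ constant and quasi-continuous, whose Choquet integral is $\lesssim\Rcap{\alpha}{r}(K)$. This needs a genuinely new ingredient of the kind developed in \cite{OP1, OP2}. For instance, assume the Hardy--Littlewood maximal operator $M$ is bounded on the Choquet space $L^\delta(\Rcap{\alpha}{r})$ for $\delta>(n-\alpha r)/n$. Choose $\phi$ continuous with compact support, $\upchi_K\le\phi\le\upchi_O$ and $\Rcap{\alpha}{r}(O)\le 2\Rcap{\alpha}{r}(K)$, and take $w=c\,(M\phi)^\delta/\Rcap{\alpha}{r}(K)$ with $(n-\alpha r)/n<\delta<1$. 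Then:
\begin{itemize}
\item $w$ is continuous;
\item $w\ge c/\Rcap{\alpha}{r}(K)$ on $K$;
\item $[w]_{A_1}\lesssim(1-\delta)^{-1}$ by Coifman--Rochberg;
\item $M\phi$ decays like $|x|^{-n}$, fast enough for the Choquet integral to converge.
\end{itemize}
As written, your proof of the ``$\lesssim$'' half of \eqref{OPW} is incomplete.
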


Recall  that a function $v:\R^n\rightarrow [-\infty, \infty]$ is said to be  quasi-continuous with respect to ${\rm Cap}_{{\bf I}_{\alpha}, r}$
if  for any  $\epsilon>0$ there exists an open set $O$ such that ${\rm Cap}_{{\bf I}_{\alpha}, r}(O)<\epsilon$
and the restriction of $v$ to  $\R^n\setminus O$  is a continuous function.

For the equivalence in \eqref{TraceMAD}, see  \cite{MS2, AH}. On the other hand, the equivalence \eqref{OPW} was obtained in \cite[Theorem 9.2]{OP1}. 
The constant $\overline{{\bf c}}$ can also be taken to be $\overline{{\bf c}}=[|\cdot|^{-n+\alpha}]_{A_1}$, which is independent of $r$ (see \cite[Theorem 6.7]{OP2}). Both \cite{OP1} and \cite{OP2} considered only the case $p>1$, but the proofs there also apply to all exponents $p>0$.

\begin{remark} Let $p > 0$, $\alpha >0$,  and $r>1$ with $\alpha r <n$.  It follows from the Sobolev embedding theorem that 
\begin{equation*}
|K|^{1-\alpha r/n} \lesssim \Rcap{\alpha}{r}(K),
\end{equation*}  
for any measurable set $K\subset\R^n$, and thus we have the continuous embedding 
\begin{equation}\label{WeakEmb}
L^{\frac{n p}{\alpha r},\infty}(\R^n)\hookrightarrow \Ms{\alpha}{r}{p}(\R^n),
\end{equation}
where $L^{s, \infty}(\R^n), s>0$, denotes the weak $L^s(\R^n)$ space.
More generally, for any $0<\epsilon< \frac{n}{\alpha r}-1,$
\begin{equation}\label{MorreyEmb}
\mathcal{M}^{p(1+\epsilon), \alpha r(1+\epsilon)}(\R^n)\hookrightarrow \Ms{\alpha}{r}{p}(\R^n) \hookrightarrow \mathcal{M}^{p, \alpha r}(\R^n),
\end{equation}
where $\mathcal{M}^{p, \lambda}(\R^n)$, $p>0, \lambda\in (0,n)$, denotes the Morrey space defined by
$$\mathcal{M}^{p, \lambda}(\R^n)=\left\{f\in L^p_{\rm loc}(\R^n): \left\|f\right\|_{\mathcal{M}^{p, \lambda}(\R^n)}<+\infty \right\},$$
with
$$\left\|f\right\|_{\mathcal{M}^{p, \lambda}(\R^n)}:= \sup_{B_R} \left(R^{\lambda -n}\int_{B_R} |f|^p dy\right)^{\frac1p}.$$
The second embedding in \eqref{MorreyEmb} follows from the fact that $\Rcap{\alpha}{r}(B_R)\simeq R^{n-\alpha r}$, whereas the first embedding is an analogue of a result due to Fefferman and Phong; see \textnormal{\cite{Fef}} and \textnormal{\cite{SW}}.
\end{remark}

By  \eqref{eq:Trie-Besov-emb}, \eqref{WeakEmb}, and \eqref{MorreyEmb}, we can easily deduce that, for  $0<\epsilon<\frac{n}{\alpha r} -1,$
\begin{equation*}
\dot{B}^{s}_{q,\min(\frac{np}{\alpha r},q)}(\R^n)  \hookrightarrow  \dot{F}^s_{q}L^{\frac{np}{\alpha r}, \infty}(\R^n) \hookrightarrow  \dot{F}_{q}^{s}\mathcal{M}^{p(1+\epsilon), \alpha r(1+\epsilon)}(\R^n), 
\end{equation*}
and 
\begin{equation*}
 \dot{F}_{q}^{s}\mathcal{M}^{p(1+\epsilon), \alpha r(1+\epsilon)}(\R^n) \hookrightarrow  \TrMs{s}{\alpha}{r}{p}{q}(\R^n)
 \hookrightarrow \dot{F}_{q}^{s} \mathcal{M}^{p, \alpha r}(\R^n)
\end{equation*}
for any $s\in \R$ and $0<q\leq \infty$.

On the other hand,    $\TrMs{s}{\alpha}{r}{p}{q}(\R^n)$  is  also included in some endpoint  Besov space, which is convenient for different purposes.
\begin{lemma}\label{lem:embedding-endpoint-besov}
Let  $(s,\alpha)\in \R\times \R_+$ and $(p,q,r)\in (0,\infty)\times (0,\infty]\times (1,\infty)$ be such that $\alpha r< n$.   
 Then we have the continuous embedding 
\begin{equation*}
  \TrMs{s}{\alpha}{r}{p}{q} (\R^n)\hookrightarrow \dot{B}^{s-\frac{\alpha r}{p}}_{\infty,\infty}(\R^n).  
\end{equation*}
\end{lemma}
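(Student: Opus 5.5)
We need to show $\sup_j 2^{j(s-\alpha r/p)}\|\Dyadic f\|_{L^\infty} \lesssim \|f\|_{\TrMs{s}{\alpha}{r}{p}{q}}$. Fix $j\in\mathbb Z$. Since the $\ell^q$ sum dominates each term, we have the pointwise bound $2^{js}|\Dyadic f|\le \big(\sum_k (2^{ks}|\dot\Delta_k f|)^q\big)^{1/q}=:G$. By definition, $\|G\|_{\Ms{\alpha}{r}{p}}=\|f\|_{\TrMs{s}{\alpha}{r}{p}{q}}$. Moreover, \eqref{MorreyEmb} (its second embedding, i.e. testing \eqref{TraceMAD} on balls together with $\Rcap{\alpha}{r}(B_R)\simeq R^{n-\alpha r}$) gives, for every ball,
\begin{equation*}
2^{js}\Big(\int_{B_R(x)}|\Dyadic f|^p\,dy\Big)^{1/p}\lesssim R^{\frac{n-\alpha r}{p}}\,\|f\|_{\TrMs{s}{\alpha}{r}{p}{q}}.
\end{equation*}

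Next we need a Bernstein/Plancherel--Pólya type estimate converting this local $L^p$ control into a sup bound for a frequency-localized function. Because $\Dyadic f$ has Fourier support in $\{|\xi|\le 2^{j+1}\}$, the standard Peetre maximal function estimate (valid for all $0<p<\infty$) yields
\begin{equation*}
|\Dyadic f(x)|\lesssim \Big(2^{jn}\int_{\R^n}\frac{|\Dyadic f(y)|^p}{(1+2^j|x-y|)^{N}}\,dy\Big)^{1/p}
\end{equation*}
for any large $N$. We will decompose $\R^n$ into the ball $B_{2^{-j}}(x)$ and the dyadic annuli $B_{2^{-j+m+1}}(x)\setminus B_{2^{-j+m}}(x)$, $m\ge0$, and apply the Morrey-type bound on each piece. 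The annulus at level $m$ contributes
\begin{equation*}
2^{jn}2^{-mN}(2^{-j+m})^{n-\alpha r}2^{-jsp}\|f\|^p = 2^{j\alpha r-jsp}\,2^{m(n-\alpha r-N)}\|f\|^p.
\end{equation*}
Choosing $N>n$ makes the series over $m$ summable. Taking $p$-th roots then gives $|\Dyadic f(x)|\lesssim 2^{j(\alpha r/p - s)}\|f\|_{\TrMs{s}{\alpha}{r}{p}{q}}$, uniformly in $x$ and $j$, which is exactly the claimed embedding.

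The main obstacle is justifying this sub-mean-value inequality for $p\le 1$, together with the requirement that $f$ belongs to $\Qs$. I would take the inequality from the Peetre maximal function bound $\sup_y |\Dyadic f(x-y)|/(1+2^j|y|)^{n/p'}\lesssim \big(M(|\Dyadic f|^{t})(x)\big)^{1/t}$ for any $t>0$. It is cleaner to use its integral form, obtained by applying the reproducing formula $\Dyadic f=\Dyadic f*\check\psi_j$ to the scaled function and interpolating $|g|\le \|g\|_\infty^{1-p}|g|^p$. This requires the a priori finiteness of the Peetre maximal function, which holds because $\Dyadic f$ is a tempered smooth function of at most polynomial growth. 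Finally, the embedding is well defined modulo polynomials, since $\Dyadic$ annihilates polynomials.
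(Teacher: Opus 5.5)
Your argument is correct and follows the same skeleton as the paper's proof. The paper also gets Morrey decay on balls, via $\dot F^s_\infty \mathcal M^{p,\alpha r}\hookrightarrow \dot B^s_\infty\mathcal M^{p,\alpha r}$. It then reduces to $0<p<1$ by H\"older, writes $\Dyadic f = K_j*\Dyadic f$ with $K_j=\mathcal F^{-1}(\psi_j)$, interpolates $|\Dyadic f(y)|\le \|\Dyadic f\|_{L^\infty}^{1-p}|\Dyadic f(y)|^p$, and sums over dyadic shells around $x$.

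The real difference is that you use the decay $|K_j(z)|\lesssim_N 2^{jn}(1+2^j|z|)^{-N}$ and sum over the outward shells $B_{2^{-j+m+1}}(x)\setminus B_{2^{-j+m}}(x)$, $m\ge 0$. This is not cosmetic. The paper bounds the kernel only by $\|K_j\|_{L^\infty}\lesssim 2^{jn}$ and sums over $B_{2^{-i}}(x)\setminus B_{2^{-i-1}}(x)$ for $i\ge j-1$. Together with $B_{2^{-j}}(x)$, these pieces cover only $B_{2^{-j+1}}(x)$. The exterior needs $i\le j-2$, and there the bound $2^{jn}2^{-i(n-\alpha r)}$ is not summable. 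Your factor $2^{m(n-\alpha r-N)}$ is exactly what repairs this.

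Conversely, once decay is used, the paper's global interpolation is a little simpler than a Peetre maximal function. It gives $\|\Dyadic f\|_{L^\infty}^p\le \sup_x\int|\Dyadic f(y)|^p|K_j(x-y)|\,dy$ directly. Finiteness of $\|\Dyadic f\|_{L^\infty}$ is then secured by approximating with $h(t\cdot)\Dyadic f\in\mathcal S(\R^n)$.

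You should add a sentence on a priori finiteness for $p<1$. Your weighted inequality comes from absorbing $g^*_\lambda(x)=\sup_y|\Dyadic f(y)|(1+2^j|x-y|)^{-\lambda}$. For a given $N$ this requires $g^*_\lambda(x)<\infty$ with $\lambda\ge N/p$, and the constant depends on $\lambda$. Polynomial growth of $\Dyadic f$ gives finiteness only once $\lambda$ exceeds its growth order, which depends on $f$ and $j$. So the first run of your argument produces an $f$-dependent constant.

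That first run does show $\Dyadic f\in L^\infty$, so $g^*_\lambda(x)\le \|\Dyadic f\|_{L^\infty}<\infty$ for every $\lambda$. Rerunning with a fixed $N$ (e.g.\ $N=n+1$, $\lambda=N/p$) then gives a uniform constant. Alternatively, use the paper's approximation. Also, the exponent in the Peetre bound you quote should be $n/t$, not $n/p'$.
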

\begin{proof} 
Since 
$$\TrMs{s}{\alpha}{r}{p}{q} (\R^n)\hookrightarrow \dot{F}_{\infty}^{s} \mathcal{M}^{p, \alpha r}(\R^n) \hookrightarrow \dot{B}_{\infty}^{s} \mathcal{M}^{p, \alpha r}(\R^n),$$
it suffices to show that 
\begin{equation}\label{MtoB}
\dot{B}_{\infty}^{s} \mathcal{M}^{p, \alpha r}(\R^n) \hookrightarrow \dot{B}^{s-\frac{\alpha r}{p}}_{\infty,\infty}(\R^n).
\end{equation}


Let $f\in \dot{B}_{\infty}^{s} \mathcal{M}^{p, \alpha r}(\R^n)$ and,  by H\"older's inequality, we may assume  that $0<p<1$.
 For each $j\in \mathbb Z$, let $\psi_j \in C^\infty_c(\R^n)$, $0\leq \psi_j\leq 1$, be such that $\psi_j=1$ for $ |\xi|\leq 2^{j+1}$ and $\psi_j=0$ for $ |\xi|\geq 2^{j+2}$. With $\varphi_j$ as in \eqref{varphij},  as   ${\rm supp} (\varphi_j)\subset B_{2^{j+1}}(0)$, we have $\varphi_j=\psi_j \varphi_j$ and hence
\begin{equation}\label{repvar}
  \varphi_j(D)f(x) = \mathcal{F}^{-1}(\psi_j \varphi_j\mathcal{F}f)(x)=K_j\ast (\varphi_j(D)f)(x),
  \end{equation}
where  $K_j=\mathcal{F}^{-1}(\psi_j).$ Note that 
\begin{equation}\label{Kinfty}
\|K_j\|_{L^{\infty}} \lesssim \, 2^{j n}.
\end{equation}

Now it follows from  \eqref{repvar} that 
\begin{align}\label{I+II}
\big|\varphi_j(D)f(x)\big|&\leq \int_{B_{2^{-j}}(x)}|K_j(x-y)|| \varphi_j(D)f(y)|dy \\
&\quad +\sum_{i=j-1}^{\infty} \int_{B_{2^{-i}}(x)\setminus B_{2^{-i-1}}(x)}|K_j(x-y)|| \varphi_j(D)f(y)|dy \nonumber\\
&=:I+II. \nonumber
 \end{align}

On the other hand, by \eqref{Kinfty}, we have
\begin{align*}
I&\lesssim  2^{-jsp}\|K_j\|_{L^{\infty}}  \|\varphi_j(D)f\|_{L^\infty}^{1-p}   \int_{B_{2^{-j}}(x)}\big(2^{js}|\varphi_j(D)f(y)|\big)^pdy  \\
&\lesssim  2^{-jsp}2^{jn}  \|\varphi_j(D)f\|_{L^\infty}^{1-p}  \|f\|^p_{\dot{B}_{\infty}^{s} \mathcal{M}^{p, \alpha r}(\R^n)}  2^{-j(n-\alpha r)} \\
&\lesssim 2^{-(sp-\alpha r)j} \|\varphi_j(D)f\|_{L^\infty}^{1-p}    \|f\|^p_{\dot{B}_{\infty}^{s} \mathcal{M}^{p, \alpha r}(\R^n)}.
\end{align*}

Similarly, for each integer  $i\geq j-1$, we estimate
\begin{align*}
& \int_{B_{2^{-i}}(x)\setminus B_{2^{-i-1}}(x)}|K_j(x-y)|| \varphi_j(D)f(y)|dy\\
&\lesssim 2^{-jsp}\|K_j\|_{L^{\infty}}  \|\varphi_j(D)f\|_{L^\infty}^{1-p}   \int_{B_{2^{-i}}(x)}\big(2^{js}|\varphi_j(D)f(y)|\big)^pdy  \\
&\lesssim 2^{-jsp}2^{jn}  \|\varphi_j(D)f\|_{L^\infty}^{1-p}  \|f\|^p_{\dot{B}_{\infty}^{s} \mathcal{M}^{p, \alpha r}(\R^n)}  2^{-i(n-\alpha r)}.
\end{align*}
 
 Thus, it follows that 
\begin{align*}
II&\lesssim 2^{-sjp}2^{j n} \|\varphi_j(D)f\|_{L^\infty}^{1-p}  \|f\|^p_{\dot{B}_{\infty}^{s} \mathcal{M}^{p, \alpha r}(\R^n)}  \sum_{i=j-1}^\infty 2^{-i(n-\alpha r)}  \\
&\lesssim 2^{-(s p- \alpha r)j}  \|\varphi_j(D)f\|_{L^\infty}^{1-p} \|f\|^p_{\dot{B}_{\infty}^{s} \mathcal{M}^{p, \alpha r}(\R^n)}.
\end{align*}

Assume for the monent that  $\|\varphi_j(D)f\|_{L^\infty}<+\infty$. Then   by \eqref{I+II} and the estimates for $I$ and $II$, we obtain
$$\|\varphi_j(D)f\|_{L^\infty} \lesssim 2^{-(s - \alpha r/p)j}   \|f\|_{\dot{B}_{\infty}^{s} \mathcal{M}^{p, \alpha r}(\R^n)}$$ 
for all $j\in \mathbb Z$, which yields \eqref{MtoB} as desired.

To remove the assumption  $\|\varphi_j(D)f\|_{L^\infty}<+\infty$, let $\rho \in C_c^\infty(B_1(0))$ be such that $\int_{\R^n} \rho dx=1$. Set $h(x)=\mathcal{F}^{-1}(\rho)(x)$ and note that $h(0)=1$. As $\varphi_j(D)f$ is of polynomial growth, we see that 
$ h(t\cdot) \varphi_j(D)f \in \mathcal{S}(\R^n)\subset L^\infty(\R^n)$ for any $t>0$. Moreover,
$${\rm supp}(\mathcal{F}\left(h(t\cdot) \varphi_j(D)f\right))\subset B_{t+2^{j+1}}(0).$$
Thus, by modifying the argument above, we find, for any $t>0$ and $j\in \mathbb Z$,
$$\| h(t\cdot) \varphi_j(D)f\|_{L^\infty} \lesssim (t+ 2^{j})^{n/p}  2^{-j n/p} 2^{-(s - \alpha r/p)j}   \|f\|_{\dot{B}_{\infty}^{s} \mathcal{M}^{p, \alpha r}(\R^n)}.$$ 
Then letting $t\rightarrow 0$, we arrive at \eqref{MtoB}.
\end{proof}

We now recall the notion vanishing at infinity for tempered distributions in $\R^n$  (see \cite[Definition 4.3]{Bou}).
\begin{definition} We say that a distribution $f\in \mathcal{S}'(\R^n)$ vanishes at infinity if 
$$\lim_{\lambda \rightarrow 0}\langle f, \lambda^n \varphi(\lambda \cdot)\rangle=0, \qquad \forall \,\varphi\in \mathcal{S}(\R^n).$$ 
The set of such distributions is denoted by $\mathcal{S}'_0(\R^n)$.
\end{definition}

By definition, $\dot{B}^\beta_{\infty, \infty}(\R^n)$, $\beta\in \R$, is a subspace of $\mathcal{S}'/\mathcal{P}(\R^n)$. However, in the case $\beta<0$, it can be regarded as a subspace of $\mathcal{S}'$. 
More precisely, we mention here the following result, which is a realization of $\dot{B}^\beta_{\infty, \infty}(\R^n)$, when $\beta< 0$, in $\mathcal{S}'$ (see \cite[Proposition 4.6]{Bou}).

\begin{proposition}\label{BoudReal}
Let $\beta<0$.  For any $f\in \dot{B}^\beta_{\infty, \infty}(\R^n)$,  the series  $\sum_{j\in \mathbb{Z}} \Dyadic f$ converges in $\mathcal{S}'(\R^n)$. The map
\begin{align*}
\Phi: \dot{B}^\beta_{\infty, \infty}(\R^n) \rightarrow   \mathcal{S}'(\R^n) {\rm ~given~by~} \Phi([f]):= \sum_{j\in \mathbb{Z}} \Dyadic f
\end{align*}
is a realization of $\dot{B}^\beta_{\infty, \infty}(\R^n)$ in $\mathcal{S}'$ in the sense that $\Phi$ is linear, continuous, and 
$\Phi([f])\in [f]$, the equivalent class of $f$, for all $[f]\in \dot{B}^\beta_{\infty, \infty}(\R^n)$. Moreover, the image $\Phi(\dot{B}^\beta_{\infty, \infty}(\R^n))$ is characterized as the set of all $g\in \mathcal{S}'$ such that $[g]\in \dot{B}^\beta_{\infty, \infty}(\R^n)$ and  $g$ vanishes  at infinity.
\end{proposition}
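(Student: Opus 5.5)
The plan is to follow the standard Littlewood–Paley argument. For $\beta<0$, convergence of $\sum_{j\in\mathbb Z}\Dyadic f$ in $\mathcal{S}'$ splits into a high-frequency part ($j\ge 0$) and a low-frequency part ($j<0$). After that I would check linearity and continuity of $\Phi$, then the two inclusions in the characterization of the image.

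\emph{Convergence and continuity.} Fix $\phi\in\mathcal S(\R^n)$. Write $\langle \Dyadic f,\phi\rangle=\langle \Dyadic f,\widetilde{\Delta}_j\phi\rangle$, where $\widetilde{\Delta}_j$ is a fattened projection with symbol $\widetilde\varphi_j=\varphi_{j-1}+\varphi_j+\varphi_{j+1}$. This gives
\[
|\langle \Dyadic f,\phi\rangle|\le \|\Dyadic f\|_{L^\infty}\|\widetilde{\Delta}_j\phi\|_{L^1}\le 2^{-j\beta}\|f\|_{\dot B^\beta_{\infty,\infty}}\|\widetilde{\Delta}_j\phi\|_{L^1}.
\]
For $j\to-\infty$ we have $\|\widetilde\Delta_j\phi\|_{L^1}\lesssim \|\phi\|_{L^1}$, uniformly in $j$, by the scaling of the kernel. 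Since $\beta<0$, the factor $2^{-j\beta}=2^{|j||\beta|}$ blows up, so this bound alone is not enough. I would instead use the sharper bound $\|\widetilde\Delta_j\phi\|_{L^1}\lesssim 2^{jn}\|\phi\|_{L^1}$ for $j<0$, obtained via $\|\Dyadic f\|_{L^\infty}\|\widetilde\Delta_j \phi\|_{L^1}$ with $\|\widetilde\Delta_j\phi\|_{L^1}\le \|\widetilde\varphi_j\widehat\phi\|_{L^1}$ in suitable form. More simply, pair in $L^\infty\times L^1$ in the frequency picture: $\|\widetilde\Delta_j\phi\|_{L^1}\lesssim$ a Schwartz seminorm times $2^{jn}$ when $j<0$, because the kernel sits at scale $2^{-j}$ and $\widehat\phi$ is bounded on $|\xi|\sim 2^j$. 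The low-frequency terms are then bounded by $2^{j(n-\beta)}$, which is summable since $n-\beta>0$. For $j\ge0$, the factor $2^{-j\beta}$ grows, but $\|\widetilde\Delta_j\phi\|_{L^1}\lesssim_N 2^{-jN}$ by the decay of $\widehat\phi$ at high frequencies, so these terms are summable as well. Both tails are controlled by $\|f\|_{\dot B^\beta_{\infty,\infty}}$ times a Schwartz seminorm of $\phi$. This gives convergence and continuity. Linearity is clear, and $\Phi([f])\in[f]$ follows from the identity $f=\sum_j\Dyadic f$ in $\mathcal S'/\mathcal P$.

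\emph{Image.} First, $\Phi([f])$ vanishes at infinity. Test against $\lambda^n\phi(\lambda\cdot)$, whose Fourier transform is $\widehat\phi(\cdot/\lambda)$, concentrated at frequencies of order $\lambda$. Redoing the estimates above with the dilated test function, the sum is dominated by $\lambda^{-\beta}$ (up to constants), which tends to $0$ as $\lambda\to0$ because $\beta<0$. Conversely, let $g\in\mathcal S'$ vanish at infinity with $[g]\in\dot B^\beta_{\infty,\infty}$. Then $g-\Phi([g])$ is a polynomial $P$ (the two lie in the same class), and $P$ also vanishes at infinity. A nonzero polynomial does not: pairing with $\lambda^n\phi(\lambda\cdot)$ gives $\sum_\alpha c_\alpha\lambda^{-|\alpha|}\int x^\alpha\phi$, and this fails to tend to $0$ for a suitable $\phi$. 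Hence $P=0$ and $g=\Phi([g])$.

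The main obstacle is the low-frequency summation in the convergence step. Getting the factor $2^{jn}$ needs care: one should estimate $\langle\Dyadic f,\phi\rangle=\langle \Dyadic f, \widetilde\Delta_j\phi\rangle$ and use that $\widetilde\Delta_j\phi$ has $L^1$ norm $\lesssim 2^{jn}\|\widehat\phi\|_{L^\infty}$ up to decay factors. Making this precise (equivalently, $\|\widetilde\Delta_j\phi\|_{L^1}\lesssim \min(1,2^{jn})$ times seminorms) is where the hypothesis $\beta<0$ and the polynomial ambiguity interact. This is also exactly what \cite[Proposition 4.6]{Bou} handles.
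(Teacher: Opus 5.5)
The low-frequency step, which you single out as the main obstacle, is wrong as written. From $\|\dot\Delta_j f\|_{L^\infty}\le 2^{-j\beta}\|f\|_{\dot B^\beta_{\infty,\infty}}$ and $\beta<0$, the factor for $j<0$ is $2^{-j\beta}=2^{-|j|\,|\beta|}$. This decays geometrically as $j\to-\infty$; it blows up only as $j\to+\infty$. You have the sign reversed.

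The estimate you invoke to compensate is also false. You claim $\|\widetilde\Delta_j\phi\|_{L^1}\lesssim 2^{jn}$ times a Schwartz seminorm of $\phi$ for $j<0$. But $\widetilde\Delta_j\phi=2^{jn}K(2^j\cdot)*\phi$ with $K=\mathcal F^{-1}(\varphi_{-1}+\varphi_0+\varphi_1)$. By continuity of translations in $L^1$, $\|\widetilde\Delta_j\phi\|_{L^1}\to|\int\phi|\,\|K\|_{L^1}$ as $j\to-\infty$, which is nonzero whenever $\int\phi\neq0$. Your heuristic (``$\widehat\phi$ is bounded on $|\xi|\sim 2^j$'') really gives $\|\widetilde\Delta_j\phi\|_{L^\infty}\lesssim 2^{jn}\|\widehat\phi\|_{L^\infty}$. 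That bound would have to be paired with $\|\dot\Delta_j f\|_{L^1}$, which the hypothesis does not control. So your bound $2^{j(n-\beta)}$ for the low-frequency terms is not justified.

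The repair is to correct the sign. Pairing directly gives, for $j<0$,
$|\langle\dot\Delta_j f,\phi\rangle|\le\|\dot\Delta_j f\|_{L^\infty}\|\phi\|_{L^1}\le 2^{-|j|\,|\beta|}\|f\|_{\dot B^\beta_{\infty,\infty}}\|\phi\|_{L^1}$,
which is summable precisely because $\beta<0$. This is the only place where $\beta<0$ is needed for convergence. The tail $\sum_{j\ge0}\dot\Delta_j f$ converges in $\mathcal S'$ for every $f\in\mathcal S'$, and your correct bound $\|\widetilde\Delta_j\phi\|_{L^1}\lesssim_N 2^{-jN}$ for $j\ge0$ only makes that part quantitative. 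So the low frequencies are the easy part, not the obstacle.

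Once this is fixed, the rest of your plan goes through.
\begin{itemize}
\item Continuity and $\Phi([f])\in[f]$ follow as you say.
\item For vanishing at infinity, split at $2^j\approx\lambda$. For $2^j\le\lambda$, use $\|\lambda^n\phi(\lambda\cdot)\|_{L^1}=\|\phi\|_{L^1}$. For $2^j>\lambda$, use $\|\widetilde\Delta_j(\lambda^n\phi(\lambda\cdot))\|_{L^1}\lesssim_N(2^j/\lambda)^{-N}$ with $N>|\beta|$. Together these give $|\langle\Phi([f]),\lambda^n\phi(\lambda\cdot)\rangle|\lesssim\lambda^{|\beta|}\|f\|_{\dot B^\beta_{\infty,\infty}}$.
\item Your argument that a polynomial vanishing at infinity must be zero is correct.
\end{itemize}
With the sign corrected, this is the standard proof. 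The paper itself gives no proof and simply quotes the result from \cite[Proposition 4.6]{Bou}.
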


We remark that for $g\in \Phi(\dot{B}^\beta_{\infty, \infty}(\R^n))$, one has $g=\sum_{j\in \mathbb{Z}} \Dyadic f$ in $\mathcal{S}'(\R^n)$ for some $f\in \mathcal{S}'(\R^n)$. But since 
$\sum_{j\in \mathbb{Z}} \Dyadic f=f+P$ for a polynomial $P$, it follows that we have the representation $g=\sum_{j\in \mathbb{Z}} \Dyadic g$ in $\mathcal{S}'(\R^n)$.

Since $\TrMs{s}{\alpha}{r}{p}{q}(\R^n)\hookrightarrow \dot{B}^{s-\frac{\alpha r}{p}}_{\infty, \infty}(\R^n)$ by Lemma \ref{lem:embedding-endpoint-besov}, we obtain the following realization of 
$\TrMs{s}{\alpha}{r}{p}{q}(\R^n)$ in $\mathcal{S}'(\R^n)$.
\begin{corollary}  Let  $(s,\alpha)\in \R\times \R_+$ and $(p,q,r)\in (0,\infty)\times (0,\infty]\times (1,\infty)$ be such that $\alpha r< n$ and $s<\frac{\alpha r}{p}$.   Then the map 
\begin{align*}
\Phi: \TrMs{s}{\alpha}{r}{p}{q}(\R^n) \rightarrow   \mathcal{S}'(\R^n) {\rm ~given~by~} \Phi([f]):= \sum_{j\in \mathbb{Z}} \Dyadic f
\end{align*}
is a realization of $\TrMs{s}{\alpha}{r}{p}{q}(\R^n)$ in $\mathcal{S}'$. Moreover,  the image $\Phi(\TrMs{s}{\alpha}{r}{p}{q}(\R^n))$ is characterized as the set of all $g\in \mathcal{S}'$ such that $[g]\in \TrMs{s}{\alpha}{r}{p}{q}(\R^n)$ and  $g$ vanishes  at infinity. 
\end{corollary}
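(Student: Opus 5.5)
The corollary should follow by composing the embedding of Lemma \ref{lem:embedding-endpoint-besov} with the realization of Proposition \ref{BoudReal}. Set $\beta:=s-\frac{\alpha r}{p}$. The hypothesis $s<\frac{\alpha r}{p}$ gives $\beta<0$, and Lemma \ref{lem:embedding-endpoint-besov} gives a continuous inclusion $\iota:\TrMs{s}{\alpha}{r}{p}{q}(\R^n)\hookrightarrow \dot{B}^{\beta}_{\infty,\infty}(\R^n)$. Both spaces are subspaces of $\Qs{}{}$, so $\iota$ is literally the identity on equivalence classes.

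Next apply Proposition \ref{BoudReal} with this $\beta$. Denote its map by $\Phi_B$ and take $\Phi:=\Phi_B\circ\iota$. Then $\Phi([f])=\sum_{j}\Dyadic f$, and this series converges in $\mathcal{S}'(\R^n)$. The required properties transfer as follows:
\begin{itemize}
\item Linearity is inherited from $\Phi_B$.
\item Continuity follows because a composition of continuous maps is continuous.
\item The membership $\Phi([f])\in[f]$ holds because $\Phi_B([f])\in[f]$ for every $[f]\in \dot{B}^{\beta}_{\infty,\infty}$.
\end{itemize}
So $\Phi$ is a realization in the same sense as in Proposition \ref{BoudReal}.

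It remains to characterize the image. Suppose $g=\Phi([f])$. Then $[g]=[f]\in \TrMs{s}{\alpha}{r}{p}{q}$, and $g\in\Phi_B(\dot{B}^\beta_{\infty,\infty})$, so $g$ vanishes at infinity by Proposition \ref{BoudReal}. Conversely, suppose $g\in\mathcal{S}'$ vanishes at infinity and $[g]\in \TrMs{s}{\alpha}{r}{p}{q}$. Then $[g]\in\dot{B}^\beta_{\infty,\infty}$ via $\iota$, so the characterization in Proposition \ref{BoudReal} gives $g=\Phi_B([g])=\Phi([g])$, which lies in the image of $\Phi$.

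There is no real obstacle once the lemma and Proposition \ref{BoudReal} are available. The one point to check is that $\Phi_B$ restricted to the subspace gives the same element as the series formula. This is immediate, because both are defined by $\sum_j\Dyadic f$, and $\Dyadic$ does not depend on the choice of representative of the class modulo polynomials. The remark following Proposition \ref{BoudReal} also gives $g=\sum_j\Dyadic g$ for such $g$, which confirms the identification.
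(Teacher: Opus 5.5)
Your proposal is correct and is exactly the paper's argument: the paper obtains the corollary in one line by combining the embedding $\TrMs{s}{\alpha}{r}{p}{q}(\R^n)\hookrightarrow \dot{B}^{s-\frac{\alpha r}{p}}_{\infty,\infty}(\R^n)$ of Lemma \ref{lem:embedding-endpoint-besov}, where $s-\frac{\alpha r}{p}<0$, with the realization of Proposition \ref{BoudReal}. Your checks of linearity, continuity, $\Phi([f])\in[f]$, and the image characterization spell out details the paper leaves implicit. In the converse, note that $g=\Phi_B([h])\in[h]$ forces $[h]=[g]$.
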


\begin{remark} Let $\beta<0$ and suppose that $E$ is a subspace of  $\dot{B}^{\beta}_{\infty, \infty}(\R^n)$. By Proposition  \ref{BoudReal},
in what follows and when appropriate, we will identify $E$ with $\Phi(E)$ and treats $E$ as a subspace 
of $\mathcal{S}'_0(\R^n)$.  For example, we identify $\dot{B}^{\beta}_{\infty, \infty}(\R^n)$ with $\Phi(\dot{B}^{\beta}_{\infty, \infty}(\R^n))$  and $\TrMs{s}{\alpha}{r}{p}{q}(\R^n)$ with 
$\Phi(\TrMs{s}{\alpha}{r}{p}{q}(\R^n))$ when $s<\frac{\alpha r}{p}$.
\end{remark}

We will need the following characterizations of weighted homogeneous Triebel-Lizorkin spaces with negative order when the weights involved  are in the $A_\infty$ class.

\begin{theorem}\label{Bui}  Let $s<0$, $(p, q)\in (0,\infty)\times (0, \infty]$, and  $w\in A_\infty$. Assume that $\nu$ is a function in $\mathcal{S}(\R^n)$ that satisfies the Tauberian condition, i.e.,   
\begin{equation}\label{Tau}
\forall \,\xi \not= 0 \quad {\rm there ~exists } \quad  t>0 \quad {\rm such~that} \quad \mathcal{F}(\nu)(t\xi)\not = 0.
\end{equation}
Then we have 
\begin{equation}\label{UB-97}
\left\| f \right\|_{\dot{F}_q^s L^p(w)} \leq C \left\| \left( \int_0^\infty | t^{-s} \langle f, \nu_t(x-\cdot) \rangle|^q \frac{dt}{t} \right)^{\frac1q}\right\|_{L^p(w)},
\end{equation}
where $\nu_t(\cdot)= t^{-n} \nu(\cdot/t)$, and there exists a polynomial $P$ such that 
\begin{equation}\label{LB-96}
 \left\| \left( \int_0^\infty | t^{-s} \langle f -P, \nu_t(x-\cdot) \rangle|^q \frac{dt}{t} \right)^{\frac1q}\right\|_{L^p(w)} \leq C\, \left\| f \right\|_{\dot{F}_q^s L^p(w)}.
\end{equation}
 With an appropriate modification, inequalities \eqref{UB-97} and \eqref{LB-96} also hold for $q=\infty$. 
\end{theorem}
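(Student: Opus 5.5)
This is a weighted version of the continuous (Tauberian) characterization of homogeneous Triebel--Lizorkin spaces. The plan is to follow the Bui / Bui--Paluszyński--Taibleson approach: compare the discrete Littlewood--Paley pieces $\Dyadic f$ with the continuous averages $\langle f,\nu_t(x-\cdot)\rangle$ through Peetre-type maximal functions, and then use the Fefferman--Stein vector-valued maximal inequality in $L^p(w)$.

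\textbf{Reductions.} Since $w\in A_\infty$, we have $w\in A_{p_0}$ for some $p_0$. Choose $0<r<\min(p,q)$ with $p/r>p_0$ and $q/r>1$. Then the Hardy--Littlewood maximal operator $M$ is bounded on $L^{p/r}(w)$, and $M$ is vector-valued bounded on $L^{p/r}(\ell^{q/r},w)$ by the weighted Fefferman--Stein inequality (Andersen--John). Define the Peetre maximal functions
$$\varphi_j^{*}f(x)=\sup_y \frac{|\Dyadic f(y)|}{(1+2^j|x-y|)^{a}},\qquad \nu_t^{*}f(x)=\sup_y\frac{|\langle f,\nu_t(y-\cdot)\rangle|}{(1+|x-y|/t)^{a}},$$
with $a>n/r$.

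\textbf{Proof of \eqref{UB-97}.} The Tauberian condition \eqref{Tau}, together with compactness of the annulus $\{1/2\le|\xi|\le 2\}$ and continuity of $\mathcal{F}\nu$, gives finitely many dilations $t_1,\dots,t_N$ for which $\sum_k|\mathcal{F}\nu(t_k\xi)|^2>0$ on the annulus. Following Bui--Paluszyński--Taibleson, build a Calderón reproducing formula
$$\varphi(2^{-j}\xi)=\int_1^2\sum_k \lambda_k(2^{-j}\xi,\tau)\,\mathcal{F}\nu(t_k\tau 2^{-j}\xi)\,\frac{d\tau}{\tau},$$
with Schwartz multipliers $\lambda_k$. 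This yields the pointwise bound
$$|\Dyadic f(x)|\lesssim \sum_k\int_1^2 \nu^*_{t_k\tau2^{-j}}f(x)\,\frac{d\tau}{\tau}.$$
Next, the Peetre maximal function at scale $t$ is controlled by an $r$-average of $|\langle f,\nu_{t'}\rangle|$ over nearby scales $t'\sim t$ and over $M$. Here one uses the Strömberg--Torchinsky/Rychkov sub-mean-value argument, which relies only on $f\in\mathcal S'$ and the decay of the convolution kernels, and produces a sum $\sum_{l\ge0}2^{-l(a r-n)}\times$(local $r$-averages). Summing over scales with $2^{js}$, which is equivalent to $t^{-s}$, absorbing the dilation shifts since $s$ is fixed, and applying the weighted Fefferman--Stein inequality on $L^{p/r}(\ell^{q/r},w)$ gives \eqref{UB-97}. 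Polynomials are invisible here because $\Dyadic P=0$.

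\textbf{Proof of \eqref{LB-96}.} Write $f=\sum_j\Dyadic f+P$ in $\mathcal S'$. This is possible because, for $s<0$, the weighted space $\dot{F}^s_qL^p(w)$ embeds into some $\dot B^{\beta}_{\infty,\infty}$ with $\beta<0$, using the doubling of $w$ in the same way as in Lemma \ref{lem:embedding-endpoint-besov}; Proposition \ref{BoudReal} then gives a representative vanishing at infinity. For $t\sim 2^{-k}$ we get
$$\langle f-P,\nu_t(x-\cdot)\rangle=\sum_j \nu_t*\Dyadic f(x).$$
\begin{itemize}
\item For $j\ge k$, use the moment-free decay $|\nu_t*\Dyadic f|\lesssim \varphi^*_jf\,(2^{k-j})^{a-n}$-type bounds.
\item For $j<k$, use smoothness of $\Dyadic f$: $|\nu_t*\Dyadic f|\lesssim \varphi_j^*f(x)$ with no gain.
\end{itemize}
The negative order $s<0$ is exactly what makes the low-frequency sum $\sum_{j<k}2^{(k-j)s}$ converge, so no vanishing moments of $\nu$ are needed. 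Then $t^{-s}|\cdots|\lesssim\sum_j 2^{-|j-k|\delta}2^{js}\varphi_j^*f(x)$ for some $\delta>0$. A discrete Young inequality in $\ell^q$ reduces matters to $\|\{2^{js}\varphi_j^*f\}\|_{L^p(\ell^q,w)}$, and this is bounded by $\|f\|_{\dot F^s_qL^p(w)}$ via $\varphi^*_jf\lesssim (M|\Dyadic f|^r)^{1/r}$ and the weighted Fefferman--Stein inequality.

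\textbf{Main obstacle.} The hardest step is the Peetre maximal comparison in \eqref{UB-97}. Without vanishing moments or compact Fourier support for $\nu$, one must rely on Rychkov's bootstrap argument to bound the maximal functions by $r$-means, including an a priori finiteness issue that is handled by the Rychkov/Ullrich trick of first working with $f$ in a weighted $L^\infty$ class. The case $q=\infty$ follows in the same way, with suprema replacing integrals in $t$.
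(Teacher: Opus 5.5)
Note first that the paper gives no proof of this theorem. It quotes the results of Bui and of Bui--Paluszy\'nski--Taibleson. Your outline is the method of those papers: Peetre maximal functions, a Rychkov-type sub-mean-value bootstrap, the weighted Fefferman--Stein inequality, and $s<0$ replacing moment conditions on $\nu$ at coarse scales. Two steps, however, do not work as you wrote them.

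\textbf{The realization step in \eqref{LB-96}.} You claim that $\dot{F}^s_qL^p(w)\hookrightarrow\dot{B}^{\beta}_{\infty,\infty}$ for some $\beta<0$ ``as in Lemma~\ref{lem:embedding-endpoint-besov}'', and then invoke Proposition~\ref{BoudReal}. This is false for general $w\in A_\infty$. The Morrey argument relies on translation-invariant lower bounds for the size of balls, whereas an $A_\infty$ weight can be small far out.

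For instance, $w(x)=(1+|x|)^{-\gamma}$ with $0<\gamma<n$ is an $A_1$ weight. Take $\phi$ with $\mathcal{F}\phi\geq 0$ nontrivial and supported in $\{0.9\le|\xi|\le 1.1\}$, let $|x_k|=2^{2^k}$, and set $f=\sum_k(1+|x_k|)^{\gamma/p}2^{-k}\phi(\cdot-x_k)$. Then $\|f\|_{\dot{F}^s_qL^p(w)}\lesssim\|f\|_{L^p(w)}<\infty$ for every $s$. But $f=\dot{\Delta}_{-1}f+\dot{\Delta}_0f+\dot{\Delta}_1f$ is unbounded, since $f(x_k)\approx(1+|x_k|)^{\gamma/p}2^{-k}\phi(0)$. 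Hence $f\notin\dot{B}^{\beta}_{\infty,\infty}$ for any $\beta$.

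What you actually need is that $\sum_j\dot{\Delta}_jf$ converges in $\mathcal{S}'$. That is still true, but it must be proved directly:
\begin{itemize}
\item For $g=\dot{\Delta}_jf$, Peetre's inequality and $w\in A_{p/r}$ give $|g(x)|\lesssim\|g\|_{L^p(w)}\,w(B_{2^{-j}}(x))^{-1/p}$.
\item The $A_{p_0}$ condition gives $w(B_R(x))\gtrsim(1+|x|)^{-np_0}w(B_1(0))$ for $R\ge 1$.
\item Hence $|\dot{\Delta}_jf(x)|\lesssim 2^{-js}(1+|x|)^{np_0/p}\|f\|_{\dot{F}^s_qL^p(w)}$ for $j\le 0$.
\item Because $s<0$, the low-frequency series converges absolutely in $\mathcal{S}'$; the high-frequency part always does.
\end{itemize}
With this replacement the rest of your proof of \eqref{LB-96} goes through, with $P=f-\sum_j\dot{\Delta}_jf$. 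This $P$ is $0$ under \eqref{Canon}, as the paper remarks.

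\textbf{Scales in \eqref{UB-97}.} Rychkov's bootstrap does not bound $\nu^*_tf$ by an $r$-average over a continuum of scales $t'\sim t$. What it yields is the same-scale term $\int t^{-n}|\nu_t*f(y)|^r(1+|x-y|/t)^{-ar}\,dy$, plus all finer scales $t'<t$ weighted by $(t'/t)^{Nr}$. Because the same-scale term is not averaged, your bound $|\dot{\Delta}_jf|\lesssim\sum_k\int_1^2\nu^*_{t_k\tau 2^{-j}}f\,d\tau/\tau$ cannot be summed in $L^q(dt/t)$ when $q<1$. The power $q$ cannot be moved inside the $\tau$-integral.

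The fix is as follows.
\begin{itemize}
\item Choose the $t_k$ so that $\sum_k|\mathcal{F}\nu(t_k\eta)|^2>0$ on $\{1/2\le|\eta|\le 4\}$. Then the reproducing identity holds for each fixed $\tau\in[1,2]$, so $|\dot{\Delta}_jf|^q\lesssim\sum_k\int_1^2(\nu^*_{t_k\tau 2^{-j}}f)^q\,d\tau/\tau$.
\item This reduces \eqref{UB-97} to the continuous Peetre-maximal norm.
\item Apply the fixed-$t$ bootstrap inequality and raise it to the power $q/r>1$.
\item Integrate in $dt/t$ using its dilation invariance, taking $N$ large enough to absorb the polynomial losses in $t/t'$.
\item Conclude with the weighted Fefferman--Stein inequality on $L^{p/r}(w;L^{q/r}(dt/t))$.
\end{itemize}
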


Inequalities  \eqref{UB-97} and  \eqref{LB-96} were obtained in \cite[Theorem 2]{B-97} and \cite[Theorem 3.1]{B-96}, respectively (see also \cite{B-84} and \cite{B-94}).
We notice  from the proof of \cite[Theorem 3.1]{B-96}  that 
 one can take $P=0$ when $f$ has the representation 
 \begin{equation}\label{Canon}
f=\sum_{j\in\mathbb{Z}} \Dyadic f \quad {\rm in~} \mathcal{S}'(\R^n).
\end{equation}

We now use   Theorem \ref{Bui} and Proposition \ref{W-char}  to obtain  useful characterizations of $\TrMs{s}{\alpha}{r}{p}{q}(\R^n)$, $s<0$, by smooth cut-off functions or by temperatures.

\begin{theorem}\label{temChar}
Let $s<0$, $p\in (0,\infty)$, $q\in (0,\infty]$, $r\in (1,\infty)$ and $\alpha>0$ such that $\alpha r<n$. Let $\nu\in C_c^\infty(\R^n)$ be such that  $\upchi_{B_{1/2}(0)} \leq \nu\leq \upchi_{B_1(0)}$, and  set 
$\nu_t(\cdot)= t^{-n} \nu(\cdot/t), t>0$. 
Then  we have 
\begin{equation}\label{UB-nu}
\left\| f \right\|_{\TrMs{s}{\alpha}{r}{p}{q}(\R^n)} \lesssim \left\| \left( \int_0^\infty | t^{-s} \langle f, \nu_t(x-\cdot) \rangle|^q \frac{dt}{t} \right)^{\frac1q}\right\|_{\Ms{\alpha}{r}{p}(\R^n)},
\end{equation}
\begin{equation}\label{UP-M}
\left\| f \right\|_{\TrMs{s}{\alpha}{r}{p}{q}(\R^n)} \lesssim \left\| \left( \int_0^\infty | t^{-\frac{s}{2}} e^{t\Delta} f(x)|^q \frac{dt}{t} \right)^{\frac1q}\right\|_{\Ms{\alpha}{r}{p}(\R^n)},
\end{equation}
and there exists a polynomial $P$ such that 
\begin{equation}\label{LB-nu}
 \left\| \left( \int_0^\infty | t^{-s} \langle f - P, \nu_t(x-\cdot) \rangle|^q \frac{dt}{t} \right)^{\frac1q}\right\|_{\Ms{\alpha}{r}{p}(\R^n)}\lesssim \left\| f \right\|_{\TrMs{s}{\alpha}{r}{p}{q}(\R^n)},
 \end{equation}
\begin{equation}\label{LB-M}
 \left\| \left( \int_0^\infty | t^{-\frac{s}{2}} e^{t\Delta}(f-P) |^q \frac{dt}{t} \right)^{\frac1q}\right\|_{\Ms{\alpha}{r}{p}(\R^n)} \lesssim\, \left\| f \right\|_{\TrMs{s}{\alpha}{r}{p}{q}(\R^n)}.
\end{equation}
 With an appropriate modification, inequalities \eqref{UB-nu}--\eqref{LB-M} also hold for $q=\infty$. Moreover, we can take $P=0$ in \eqref{LB-nu} and \eqref{LB-M} when $f$ has the representation \eqref{Canon}.
\end{theorem}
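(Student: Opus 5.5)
The plan is to transfer the weighted statements of Theorem \ref{Bui} to the Sobolev multiplier setting through the weight characterization \eqref{OPW} of Proposition \ref{W-char}. Put $G(x)=\big\{\sum_j (2^{js}|\Dyadic f(x)|)^q\big\}^{1/q}$ and let $S_\nu f(x)$, $S_H f(x)$ denote the two continuous square functions appearing in \eqref{UB-nu} and \eqref{UP-M}. Since $\|\,|g|\,\|_{\Ms{\alpha}{r}{p}}^p=\|\,|g|^p\,\|_{\Ms{\alpha}{r}{1}}$, \eqref{OPW} gives
\begin{equation*}
\|G\|_{\Ms{\alpha}{r}{p}}\simeq \sup_{w}\|G\|_{L^p(w)}=\sup_w \|f\|_{\dot F^s_qL^p(w)},
\end{equation*}
where $w$ ranges over the admissible class $\mathcal{W}$ of $A_1$ weights described there, with $[w]_{A_1}\le \overline{\bf c}$. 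The key point is that $\mathcal W$ is a \emph{uniform} subfamily of $A_1\subset A_\infty$, so the constants in Theorem \ref{Bui} (which depend only on $n,s,p,q$ and the $A_\infty$ character of $w$) are uniform over $\mathcal W$. I would verify this by inspecting how the constants in \cite{B-97, B-96} depend on $w$: they come from weighted Fefferman--Stein vector-valued maximal inequalities, applied in $L^{p/\theta}(w)$ with $\theta$ small so that $w\in A_{p/\theta}$, and these are controlled by $[w]_{A_1}$.

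\textbf{Cut-off kernel $\nu$.} The function $\nu\in C_c^\infty$ with $\upchi_{B_{1/2}}\le\nu$ satisfies the Tauberian condition \eqref{Tau}. Indeed, $\mathcal F\nu(0)=\int\nu>0$, and by continuity $\mathcal F\nu(t\xi)\ne0$ for $t$ small. Applying \eqref{UB-97} with each $w\in\mathcal W$ gives $\|f\|_{\dot F^s_qL^p(w)}\le C\|S_\nu f\|_{L^p(w)}\le C\|S_\nu f\|_{\Ms{\alpha}{r}{p}}$, using \eqref{OPW} in the easy direction. Taking the supremum over $w$ yields \eqref{UB-nu}. For \eqref{LB-nu}, I would first reduce to $f$ having the representation \eqref{Canon}. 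The finiteness of the right-hand side together with Lemma \ref{lem:embedding-endpoint-besov} places $[f]$ in $\dot B^{s-\alpha r/p}_{\infty,\infty}$, and Proposition \ref{BoudReal} then produces $P$ with $f-P=\sum_j\Dyadic f$ in $\mathcal S'$. Crucially, this $P$ is independent of $w$. By the remark after Theorem \ref{Bui}, the polynomial in \eqref{LB-96} may then be taken to be $0$ for $f-P$, uniformly in $w$. So $\|S_\nu(f-P)\|_{L^p(w)}\lesssim\|f\|_{\dot F^s_qL^p(w)}\le\|G\|_{L^p(w)}\lesssim\|G\|_{\Ms{\alpha}{r}{p}}$. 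Taking the supremum and applying \eqref{OPW} once more gives \eqref{LB-nu}.

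\textbf{Heat kernel.} For \eqref{UP-M} and \eqref{LB-M}, use the Gaussian $\nu=\Heat(\cdot,1)\in\mathcal S$, which is Tauberian because its Fourier transform never vanishes. Then $\langle f,\nu_t(x-\cdot)\rangle=e^{t^2\Delta}f(x)$, and the substitution $t\mapsto t^{1/2}$ turns $\int_0^\infty|t^{-s}e^{t^2\Delta}f|^q\,dt/t$ into $2^{-1}\int_0^\infty|t^{-s/2}e^{t\Delta}f|^q\,dt/t$. The same weighted-to-multiplier argument, with the same $P$, then gives both estimates. The case $q=\infty$ is handled identically, using the $\sup_t$ versions of Theorem \ref{Bui}.

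\textbf{Main obstacle.} The hard step is ensuring uniformity in $w$. Concretely, one must check that Bui's constants depend only on an upper bound for $[w]_{A_1}$ and are not merely finite for each $A_\infty$ weight, and that the polynomial $P$ does not depend on $w$. The latter is settled by the canonical realization described above. For the former, I would track the constants through the weighted Peetre maximal function and Fefferman--Stein arguments, noting that $[w]_{A_{p/\theta}}\le[w]_{A_1}$.
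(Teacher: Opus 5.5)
Your proposal is correct and follows essentially the same route as the paper. The paper likewise obtains \eqref{UB-nu} and \eqref{LB-nu} by applying Theorem \ref{Bui} with the bump $\nu$ to the admissible $A_1$ weights of \eqref{OPW} and taking the supremum over $w$, and obtains \eqref{UP-M} and \eqref{LB-M} the same way with the Gaussian $(4\pi)^{-n/2}e^{-|\cdot|^2/4}$ and the change of variables $\tau=t^2$. You also make two points explicit that the paper's two-line proof leaves implicit: the uniformity of Bui's constants over the class $[w]_{A_1}\le\overline{\bf c}$, and the choice of a single $w$-independent polynomial $P$ via Lemma \ref{lem:embedding-endpoint-besov} and Proposition \ref{BoudReal}; your handling of both is sound.
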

\begin{proof} Inequalities \eqref{UB-nu} and \eqref{LB-nu} follow from  Proposition \ref{W-char} and Theorem \ref{Bui} with $\nu\in C_c^\infty(\R^n)$  such that  $\upchi_{B_{1/2}(0)} \leq \nu\leq \upchi_{B_1(0)}$. Note that the Tauberian condition \eqref{Tau} holds in this case.
Similarly, inequalities \eqref{UP-M} and \eqref{LB-M} follow from   Proposition \ref{W-char} and Theorem  \ref{Bui}, with $\nu=(4\pi)^{-n/2} e^{-|\cdot|^2/4}$, combined with an appropriate change of variables. 
\end{proof}

\section{Semilinear parabolic equations with nonnegative measures as initial data}

We now recall  the following fundamental result on  the existence of nonnegative global in time solutions to the Cauchy problem
\begin{align}
 \begin{cases}\label{ABC}
 \partial_tu-\Delta u=|u|^{b-1}u \quad {\rm ~in~} \R^n\times (0,\infty),\\
 u(\cdot,0)= u_0 \quad {\rm ~in~} \R^n,
 \end{cases}   
\end{align}
 with  nonnegative  measure initial data  that was obtained by Baras and Pierre  in  \cite[Theorem 2.1]{BP} (see also \cite[Theorem 2.12]{KV}).
\begin{theorem}\label{BP} Let $b>1$. Suppose that $u_0$ is nonnegative measure on $\R^n$. Then equation 
\eqref{ABC} has a nonnegative solution $u\in L^b_{\rm loc}(\R^n\times (0,\infty))$ if and only if 
\begin{equation*}
\int_{0}^\infty\int_{\R^n} e^{t\Delta} u_0(x) h(x,t) dxdt\leq  \frac{b-1}{b^{b'}} \int_{0}^\infty\int_{\R^n} \frac{h(x,t)^{b'}}{ [\widecheck{\Heat}*h(x,t)]^{b'-1}} dxdt 
\end{equation*}
holds for all nonnegative $h\in L^\infty_{c}(\R^n\times (0,\infty))$ (the space of bounded and compactly supported functions on $\R^n\times (0,\infty)$). Here, 
$$\widecheck{\Heat}(x,t)=\Heat(x,-t)=  \frac{\upchi_{(-\infty,0)}(t)}{(-4\pi t)^{n/2}} e^{\frac{|x|^2}{4t}},\qquad (x,t)\in\R^{n+1},$$
is the adjoint of the heat kernel $\Heat(x,t)$. 
\end{theorem}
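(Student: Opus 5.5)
The plan is to treat the two implications separately: necessity is a pointwise Young-type optimisation inside a duality identity, and sufficiency is a monotone iteration whose uniform bound comes from convex duality.

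\emph{Necessity.} Suppose $u\geq 0$, $u\in L^b_{\rm loc}(\R^n\times(0,\infty))$, solves \eqref{ABC}.
\begin{enumerate}
\item I would first show that $u$ satisfies the Duhamel identity
\begin{equation*}
u=e^{t\Delta}u_0+\Heat*(u^b\upchi_{(0,\infty)}(t)) \quad \text{a.e.}
\end{equation*}
At least the inequality $u\geq e^{t\Delta}u_0+\Heat*(u^b)$ is needed, and nonnegativity makes all terms well defined in $[0,\infty]$. I would obtain it by testing the equation against $\widecheck{\Heat}$-type backward solutions, cut off in space and time. Then I would let the cutoffs go, using $u\geq0$, monotone convergence, and the weak attainment of the initial measure $u_0$.
\item Fix a nonnegative $h\in L^\infty_c(\R^n\times(0,\infty))$ and set $w=\widecheck{\Heat}*h$. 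Multiplying the identity by $h$ and using Fubini gives
\begin{equation*}
\iint e^{t\Delta}u_0\,h=\iint (u h-u^b w).
\end{equation*}
\item Pointwise I would bound $sh-s^bw\leq \sup_{s\geq0}(sh-s^bw)$. The maximiser is $s=(h/(bw))^{1/(b-1)}$, and the maximum equals $\frac{b-1}{b^{b'}}h^{b'}w^{1-b'}$. Integrating gives exactly the stated inequality.
\end{enumerate}
Only the inequality direction of Duhamel is used: with $u\geq e^{t\Delta}u_0+\Heat*(u^b)$ one gets $\iint e^{t\Delta}u_0\,h\leq \iint(uh-u^bw)$. One must also check that $\iint uh$ is finite, which holds since $h$ has compact support and $u\in L^b_{\rm loc}$.

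\emph{Sufficiency.} I would run the monotone scheme
\begin{equation*}
u_1=e^{t\Delta}u_0,\qquad u_{k+1}=e^{t\Delta}u_0+\Heat*(u_k^b).
\end{equation*}
The iterates are nondecreasing, so the monotone limit solves \eqref{ABC} as soon as it lies in $L^b_{\rm loc}$. The task is therefore a uniform local bound. For that I would use convex duality. The map $h\mapsto \int h^{b'}(\widecheck{\Heat}*h)^{1-b'}$ is convex, being a perspective function composed with a linear map. Hence the hypothesis says that the linear functional $h\mapsto\iint e^{t\Delta}u_0\,h$ is dominated by a convex, positively homogeneous functional $\frac{b-1}{b^{b'}}\Lambda(h)$. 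I would then try either of two routes.
\begin{itemize}
\item Hahn--Banach/minimax route. From this domination, produce a function $v\geq e^{t\Delta}u_0$ with $\Heat*(v^b)\leq v-e^{t\Delta}u_0$, i.e.\ a supersolution, possibly after slightly shrinking $u_0$ to $(1-\delta)u_0$ and restricting to compact subsets of $\R^n\times(0,\infty)$. The natural candidate is the Euler--Lagrange equation of the minimisation of $\Lambda(h)-\iint e^{t\Delta}u_0\,h$ over $h$ supported in a large compact set.
\item Induction route. Show directly by induction that $\iint u_kh\leq C(h)$ for each test $h$, by rerunning the identity of step 2 above for the iterates.
\end{itemize}
Once a supersolution $v$ exists, comparison gives $u_k\leq v$ by induction, and an exhaustion argument over compacts removes the truncation.

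I expect the main obstacle to be this sufficiency step: converting the dual inequality, with its sharp constant $\frac{b-1}{b^{b'}}$ and no loss, into an actual supersolution. The sharp constant suggests the extremal $h$ is exactly $b\,w\,v^{b-1}$ for the solution itself, which is what the minimisation must recover. I would first try the compact truncation plus the existence of minimisers of the strictly convex functional. The justification of the Duhamel representation for merely $L^b_{\rm loc}$ distributional solutions in the necessity part is the secondary technical point.
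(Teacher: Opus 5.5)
The paper does not prove this statement; it quotes it from Baras--Pierre (see also Kalton--Verbitsky). Your necessity half is the standard argument and is fine. From $u\ge e^{t\Delta}u_0+\Heat*(u^b\upchi_{(0,\infty)})$ and $\iint uh<\infty$ you get $\iint u^b\,\widecheck{\Heat}*h\le\iint uh<\infty$, so the subtraction is legitimate. The pointwise maximisation of $sh-s^bw$ then gives exactly $\frac{b-1}{b^{b'}}h^{b'}w^{1-b'}$.

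\textbf{The gap: sufficiency.} Sufficiency is the real content of the theorem, and neither of your routes closes it.
\begin{itemize}
\item \emph{Induction route.} For the iterates the identity reads $\iint u_{k+1}h=\iint e^{t\Delta}u_0\,h+\iint u_k^b\,\widecheck{\Heat}*h$. The hypothesis controls only the first term. The second involves $u_k^b$ against a weight that is not compactly supported, so bounds of the form $\iint u_kh'\le C(h')$ do not propagate. Moreover, any argument that loses a constant factor only proves existence under the hypothesis with a constant smaller than $\frac{b-1}{b^{b'}}$, not the stated equivalence.
\item \emph{Euler--Lagrange route.} Any functional $c\Lambda(h)-\iint e^{t\Delta}u_0\,h$ is positively $1$-homogeneous, hence not strictly convex. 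Its infimum over $h\ge0$ is either $-\infty$ or $0$, attained at $h=0$; under the hypothesis it is $0$, so the Euler--Lagrange equation says nothing. Normalising to $\{\iint e^{t\Delta}u_0h=1\}$ would need a minimiser with full support, a compactness problem you do not address.
\item \emph{Hahn--Banach as stated.} Applying it to ``a linear functional dominated by a sublinear one'' gives nothing new, since $h\mapsto\iint e^{t\Delta}u_0\,h$ is already defined everywhere.
\end{itemize}

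\textbf{The missing idea.} Separate the datum from the set of data that admit supersolutions. Let $\mathcal{B}$ be the set of $g\in L^1_{\rm loc}(\R^n\times(0,\infty))$ for which some $0\le v\in L^1_{\rm loc}$ satisfies $v\ge\Heat*(v^b\upchi_{(0,\infty)})+g$.
\begin{itemize}
\item $\mathcal{B}$ is convex: a convex combination of the $v$'s works, by convexity of $s\mapsto s^b$ and positivity of $\Heat$.
\item $\mathcal{B}$ is downward closed.
\item $\mathcal{B}$ is closed in $L^1_{\rm loc}$. Test with $h\ge0$ and use H\"older, $\iint vh\le(\iint v^b\,\widecheck{\Heat}*h)^{1/b}(\iint h^{b'}(\widecheck{\Heat}*h)^{1-b'})^{1/b'}$, inside $\iint v^b\,\widecheck{\Heat}*h\le\iint vh-\iint gh$. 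For $h$ chosen so that the last integral is finite, this gives local $L^b$ bounds that are uniform along a sequence $g_j\to g$. Weak lower semicontinuity of $v\mapsto\iint v^b\,\widecheck{\Heat}*\phi$, $\phi\ge0$, then passes to the limit.
\end{itemize}
If $e^{t\Delta}u_0\notin\mathcal{B}$, separation in $L^1_{\rm loc}$ yields $h\in L^\infty_c$, nonnegative by downward closedness, with
\[
\iint e^{t\Delta}u_0\,h>\sup_{g\in\mathcal{B}}\iint gh\ge\sup_{v\ge0}\iint\big(vh-v^b\,\widecheck{\Heat}*h\big)=\frac{b-1}{b^{b'}}\iint\frac{h^{b'}}{(\widecheck{\Heat}*h)^{b'-1}}.
\]
Here one takes $g=v-\Heat*(v^b\upchi_{(0,\infty)})$ for bounded, compactly supported $v\ge 0$ and reuses your Young computation. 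This contradicts the hypothesis, so a supersolution exists with no loss in the constant. Your monotone iteration is then dominated by it, and its limit is a solution in $L^b_{\rm loc}$.
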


We will need the following lemma but
 only in the case $p>1$.
\begin{lemma}\label{Fchar} Let $\alpha, s \in (0,n)$, $p> 0$ and $1<r<\frac{n}{\alpha}$. Suppose that $\mu$ is a nonnegative locally finite measure in $\R^n$.  Then we have 
\begin{equation}\label{FandR}
 c_1 \|{\bf I}_s[\mu]\|_{\Ms{\alpha}{r}{p}(\R^n)} \leq \|\mu\|_{\TrMs{-s}{\alpha}{r}{p}{q}(\R^n)} \leq c_2 \|{\bf I}_s[\mu]\|_{\Ms{\alpha}{r}{p}(\R^n)}
\end{equation}
for all $0<q\leq \infty$, where in the first inequality we assume that $\mu$ has the representation
$$\mu=\sum_{j\in\mathbb{Z}} \Dyadic \mu \quad {\rm in~} \mathcal{S}'(\R^n).$$
\end{lemma}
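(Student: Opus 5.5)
We will use the characterizations of Theorem \ref{temChar} with a nonnegative cut-off kernel $\nu$, $\upchi_{B_{1/2}(0)}\le \nu\le \upchi_{B_1(0)}$, since positivity of $\mu$ makes the averages $\langle \mu,\nu_t(x-\cdot)\rangle$ comparable to ball averages. The key pointwise fact is that for a nonnegative measure and $s\in(0,n)$,
\begin{equation*}
\int_0^\infty t^{s}\langle \mu,\nu_t(x-\cdot)\rangle \frac{dt}{t}\simeq \int_0^\infty \frac{\mu(B_t(x))}{t^{n-s}}\frac{dt}{t}\simeq {\bf I}_s[\mu](x).
\end{equation*}
The upper bound holds because $\nu_t\le t^{-n}\upchi_{B_t}$. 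The lower bound holds because $\nu_t\ge t^{-n}\upchi_{B_{t/2}}$, followed by the change of variables $t\mapsto 2t$. The integrands $g(t):=t^{s-n}\mu(B_t(x))$ are also \emph{almost monotone} in the dyadic sense: $\mu(B_t)$ is increasing and $t^{s-n}$ is decreasing. Hence on $[2^k,2^{k+1}]$ the function $g$ is comparable to $2^{k(s-n)}\mu(B_{2^k}(x))$ from above and to $2^{k(s-n)}\mu(B_{2^{k-1}}(x))$ from below.

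\textbf{Step 1: independence of $q$.} Let $a_k(x)=2^{-k(n-s)}\mu(B_{2^k}(x))$. We show that for every $0<q\le\infty$,
\begin{equation*}
\Big(\int_0^\infty (t^{s}\langle\mu,\nu_t(x-\cdot)\rangle)^q \tfrac{dt}{t}\Big)^{1/q}\simeq \|(a_k)\|_{\ell^q}\simeq \|(a_k)\|_{\ell^1}\simeq {\bf I}_s[\mu](x),
\end{equation*}
with constants independent of $x$ and $\mu$. The nontrivial inequality is $\|a\|_{\ell^1}\lesssim\|a\|_{\ell^\infty}$, which fails in general, so we cannot argue termwise. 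This step is the main obstacle, and we expect it to be false pointwise: for $\mu=\delta_0$ the sequence $a_k$ is geometric only for $2^k>|x|$, so there $\|a\|_{\ell^\infty}\simeq\|a\|_{\ell^1}$, but for a general $\mu$ the $\ell^\infty$ and $\ell^1$ norms can differ. We therefore do not prove it pointwise. Instead we compare $\ell^\infty$ with $\ell^1$ in the $\Ms{\alpha}{r}{p}$ quasi-norm, through the weighted characterization \eqref{OPW}. The sequence sits between $\ell^q$ norms, so it suffices to prove $\|\,{\bf I}_s[\mu]\|_{\Ms{\alpha}{r}{p}}\lesssim \|\sup_t t^{s-n}\mu(B_t(\cdot))\|_{\Ms{\alpha}{r}{p}}=\|\mathbf{M}_s\mu\|_{\Ms{\alpha}{r}{p}}$, where $\mathbf{M}_s$ is the fractional maximal function. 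For $p>1$ (the only case needed) we use \eqref{OPW}. For each admissible $A_1$ weight $w$ we have $w\in A_\infty$, and the Muckenhoupt--Wheeden inequality $\|{\bf I}_s\mu\|_{L^p(w)}\lesssim \|\mathbf{M}_s\mu\|_{L^p(w)}$ holds with a constant depending only on $[w]_{A_1}\le \overline{{\bf c}}$. Taking the supremum over $w$ gives the claim. The reverse inequality $\mathbf{M}_s\mu\lesssim {\bf I}_s[\mu]$ is pointwise and trivial.

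\textbf{Step 2: the two bounds in \eqref{FandR}.} For the right inequality in \eqref{FandR}, apply \eqref{UB-nu}. Since $\mu\ge0$, the quantity inside the $\Ms{\alpha}{r}{p}$ norm is at most a constant times the $\ell^q$ norm of $(a_k)$. For $q\ge1$ this is at most $\|a\|_{\ell^1}\simeq{\bf I}_s[\mu]$ pointwise. For $q<1$ it is bounded by the $\ell^q$ sum, which we handle as follows. The terms $a_k$ of the $\ell^q$ sum are each pointwise at most $\mathbf{M}_s\mu$, but to sum them we reuse the Muckenhoupt--Wheeden good-$\lambda$ argument in $L^p(w)$ for $A_1$ weights, via \eqref{OPW}. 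Alternatively, we use the Hedberg-type Lemma \ref{Hed} (its elliptic analogue) to compare ${\bf I}_{s'}$ for nearby $s'$. For the left inequality, use \eqref{LB-nu} with $P=0$, which is allowed by the assumed representation of $\mu$. Restricting the $t$-integral and using $\nu\ge\upchi_{B_{1/2}}$ gives the pointwise bound $\mathbf{M}_s\mu(x)\lesssim$ the inner functional for every $q$. Step 1 then upgrades $\|\mathbf{M}_s\mu\|_{\Ms{\alpha}{r}{p}}$ to $\|{\bf I}_s[\mu]\|_{\Ms{\alpha}{r}{p}}$.
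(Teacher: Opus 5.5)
Your lower bound is essentially the paper's argument. You use the nonnegative cut-off kernel, take $P=0$ from the representation, bound $\mathbf{M}_s\mu$ by the $t$-functional of Theorem \ref{temChar}, and then transfer the Muckenhoupt--Wheeden inequality $\|{\bf I}_s[\mu]\|_{L^p(w)}\lesssim\|\mathbf{M}_s\mu\|_{L^p(w)}$ to $\Ms{\alpha}{r}{p}$ via \eqref{OPW}. Your upper bound for $q\ge 1$ is correct and more direct than the paper's, since $\|(a_k)\|_{\ell^q}\le\|(a_k)\|_{\ell^1}\simeq{\bf I}_s[\mu]$ pointwise. A small point: nothing forces $p>1$. \eqref{OPW} and the weighted inequalities hold for all $0<p<\infty$, and the lemma is stated for $p>0$.

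The gap is the upper bound for $0<q<1$, which is the only nontrivial part of the second inequality. There you need
\[
\Big\|\Big(\sum_{k} a_k^q\Big)^{1/q}\Big\|_{L^p(w)}\lesssim \|\mathbf{M}_s\mu\|_{L^p(w)},
\]
with constants depending only on the $A_\infty$ character of $w$, hence uniform over the weights in \eqref{OPW}. This is the case $0<\rho<1$ of \eqref{MWB}. The paper does not prove it; it imports it from Jawerth--P\'erez--Welland and Honz\'{\i}k--Jaye.

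Saying you ``reuse the Muckenhoupt--Wheeden good-$\lambda$ argument'' does not supply this. That argument needs the linearity of ${\bf I}_s$, to split $\mu$ into local and far parts. It also needs the weak type $(1,\frac{n}{n-s})$ of ${\bf I}_s$ on the local part. For $q<1$, set
\[
F_q(\nu)(x):=\Big(\sum_k\big(2^{-k(n-s)}\nu(B_{2^k}(x))\big)^q\Big)^{1/q}.
\]
This functional dominates ${\bf I}_s[\nu]$ rather than being dominated by it, so neither ingredient is available off the shelf.

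The argument can be repaired, but you must actually carry out three steps:
\begin{itemize}
\item[(a)] Replace linearity by $q$-subadditivity, $F_q(\mu_1+\mu_2)^q\le F_q(\mu_1)^q+F_q(\mu_2)^q$.
\item[(b)] Bound the far part at $x$ in a Whitney cube $Q$ by a constant times $F_q(\mu)(x_Q)$, where $x_Q$ lies outside $\{F_q(\mu)>\lambda\}$, and adjust the good-$\lambda$ threshold accordingly.
\item[(c)] Prove the weak type $(1,\frac{n}{n-s})$ of $F_q$ on finite measures. One way is the Hedberg-type splitting $F_q(\nu)\lesssim(\mathbf{M}_0\nu)^{1-s/n}\,\nu(\R^n)^{s/n}$, where $\mathbf{M}_0\nu:=\sup_{t>0}t^{-n}\nu(B_t(\cdot))$, combined with the weak $(1,1)$ bound for $\mathbf{M}_0$.
\end{itemize}

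Your stated alternative, comparing with ${\bf I}_{s'}$ for nearby $s'$ via Lemma \ref{Hed}, does not close. It bounds $F_q$ by products of operators of orders $s\pm\epsilon$. Their $L^p(w)$ or $\Ms{\alpha}{r}{p}$ norms have the wrong homogeneity to be controlled by $\|\mathbf{M}_s\mu\|$ or $\|{\bf I}_s[\mu]\|$ in the same space. So either cite the $0<\rho<1$ weighted inequality, as the paper does, or carry out (a)--(c).
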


\begin{proof}
We first recall the weighted bound
\begin{equation}\label{MWB}
\left\|\left(\int_{0}^\infty \Big(\frac{\mu(B_t(\cdot))}{t^{n-s}}\Big)^\rho \frac{dt}{t} \right)^{\frac1\rho} \right\|_{L^p(w)}  \lesssim \left\| \sup_{t>0} \frac{\mu(B_t( \cdot))}{t^{n-s}}  \right\|_{L^p(w)},
\end{equation}
which holds for all $0<p, \rho<\infty$ and weights $w\in A_\infty$. The case $\rho\geq 1$ follows from the result of Muckenhoupt and Wheeden \cite{MWh} and the case $0<\rho<1$ can be found in \cite{JPW} and \cite{HJ}.

 Let $\nu\in C_c^\infty(\R^n)$ be such that  $\upchi_{B_{1/2}(0)} \leq \nu\leq \upchi_{B_1(0)}$, and set $\nu_t(\cdot)= t^{-n} \nu(\cdot/t)$.
To   prove the first inequality in \eqref{FandR}, it is enough to consider the  case $q=\infty$.  By Theorem \ref{Bui} with   $f=\mu \geq 0$  and the above choice of $\nu$,   we find
\begin{align}\label{Lweight}
\| \mu \|_{\dot{F}_\infty^{-s} L^p(w)} \geq  C_1 \left\| \sup_{t>0} \frac{\mu(B_t(\cdot))}{t^{n-s}}  \right\|_{L^p(w)} \geq C_2 \left\| {\bf I}_s[\mu]  \right\|_{L^p(w)},
\end{align}
where $w\in A_\infty$ and we used  \eqref{MWB} with $\rho=1$ in the last inequality. As $A_1\subset A_\infty$, by combining Proposition \ref{W-char} with \eqref{Lweight} we obtain the first bound in \eqref{FandR}. 

Conversely, in order to prove the second inequality in \eqref{FandR}, we may assume that $q\not=\infty$. For $w\in A_\infty$, by  Theorem \ref{Bui} with $f=\mu \geq 0$, we have 
\begin{equation}
\label{Uweight}
\begin{aligned}
\| \mu \|_{\dot{F}_q^{-s}L^p(w)} &\lesssim\left\|\left(\int_{0}^\infty \Big(\frac{\mu(B_t(\cdot))}{t^{n-s}}\Big)^q \frac{dt}{t} \right)^{\frac1q} \right\|_{L^p(w)}\\
&\lesssim \left\| \sup_{t>0} \frac{\mu(B_t( \cdot))}{t^{n-s}}  \right\|_{L^p(w)} \\
& \lesssim \left\| {\bf I}_s[\mu]  \right\|_{L^p(w)},
\end{aligned}
\end{equation}
where we used \eqref{MWB} with $\rho=q$ in the penultimate inequality. Finally, combining \eqref{Uweight}
and Proposition \ref{W-char}, we obtain the second bound in \eqref{FandR} as desired.
\end{proof}

Our main result in this section is the following.

\begin{theorem}\label{BP-Hung} Let $b>1+ \frac2n$.  Suppose that $u_0$ is nonnegative locally finite measure on $\R^n$.  The following statements are equivalent.

\rm{(i)} The equation
\begin{align}\label{eq:HE-kappa}
 \begin{cases}
 \partial_tu-\Delta u=|u|^{b-1}u \quad {\rm ~in~} \R^n\times (0,\infty),\\
 u(\cdot,0)=\kappa \, u_0 \quad {\rm ~in~} \R^n,
 \end{cases}   
\end{align}
has a nonnegative solution $u\in L^b_{\rm loc}(\R^n\times (0,\infty))$ for some $\kappa>0$.

\rm{(ii)} There exists a constant $c_1>0$ such that 
\begin{equation*}
u_0(K) \leq  c_1\, {\rm Cap}_{{\bf I}_{\frac{2}{b}}, b'}(K)
\end{equation*}
holds for all  compact sets $K\subset\R^n$.

\rm{(iii)} There exists a constant $c_2>0$ such that 
\begin{equation*}
(u_0\otimes\delta_0)(E) \leq  c_2\, {\rm cap}_{2, 1, b'}(E)
\end{equation*}
holds  for all compact sets $E\subset\R^{n+1}$, where $\delta_0$ is the Dirac measure supported at $0$ in $\R$.  
 
\rm{(iv)} There exists a constant $c_3>0$ such that 
\begin{equation*}
\int_{E}\mathbb{I}_2(u_0\otimes\delta_0)^b dxdt \leq  c_3\, {\rm cap}_{2, 1, b'}(E)
\end{equation*}
holds  for all compact sets $E\subset\R^{n+1}$.

\rm{(v)} There exists a constant $c_4>0$ such that 
\begin{equation*}
\int_{E} \left[e^{t\Delta} u_0(x)\right]^b   dx dt \leq  c_4\, {\rm cap}_{2, 1, b'}(E)
\end{equation*}
holds  for all compact sets $E\subset\R^{n+1}$.

{\rm (vi)} There exists a constant $c_5>0$ such that 
\begin{equation}\label{I2b}
\int_{K} \left({\bf I}_{\frac{2}{b}}[u_0]\right)^b dx \leq  c_5\, {\rm Cap}_{{\bf I}_{\frac{2}{b}}, b'}(K)
\end{equation}
holds  for all compact sets $K\subset\R^{n}$.

{\rm (vii)} $u_0$ admits the representation 
\begin{equation}\label{pre-u0}
u_0=\sum_{j\in\mathbb{Z}} \Dyadic u_0 \quad {\rm in~} \mathcal{S}'(\R^n)
\end{equation}
and $u_0\in \TrMs{-\frac{2}{b}}{\frac{2}{b}}{b'}{b}{q}(\R^n)$ for any $0<q\leq \infty$. 
\end{theorem}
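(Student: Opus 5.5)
We will prove the chain of equivalences by organizing the statements around two "hubs": the parabolic trace condition for the measure $\nu:=(u_0\otimes\delta_0)$ relative to ${\rm Cap}_{\mathbb{I}_2,b'}$, and the elliptic trace condition for $u_0$ relative to ${\rm Cap}_{{\bf I}_{2/b},b'}$. Throughout, Theorem \ref{thm:cap-equiv} (with $p=b'$; note $b>1+2/n$ is exactly $b'<(n+2)/2$) lets us replace ${\rm cap}_{2,1,b'}$ by $\PRcap{2}{b'}$ or $\mathrm{Cap}_{\Heat,b'}$ freely on compact sets.

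\textbf{Step 1: (iii)$\Leftrightarrow$(iv).} Apply Proposition \ref{prop:equiv} with $\beta=2$, $p=b'$, $p'=b$ to $\nu=u_0\otimes\delta_0$. The second and third bullets are precisely (iii) and (iv), since $\mathbb{I}_2[\nu]^{p'}=\mathbb{I}_2(u_0\otimes\delta_0)^b$.

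\textbf{Step 2: (iv)$\Leftrightarrow$(v).} On $t>0$ we have $e^{t\Delta}u_0(x)=\Heat[\nu](x,t)\lesssim \mathbb{I}_2[\nu](x,t)$, so (iv)$\Rightarrow$(v) at once, after noting that the integral in (v) only involves $t>0$. For the converse we need $\mathbb{I}_2[\nu]^b$ on all of $\R^{n+1}$ to be controlled. Here $\mathbb{I}_2[\nu](x,t)\simeq\int_0^\infty u_0(B_r(x))r^{-n}\mathbf 1_{r^2\gtrsim|t|}\,dr/r$, which is symmetric in $t$ and comparable to the averaged heat potential $\int_0^\infty u_0(B_r(x))r^{-n}\mathbf 1_{r^2\geq |t|}dr/r$. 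Using the lower bound for the heat kernel (the Nguyen pointwise bound in the remark after Theorem \ref{thm:cap-equiv}), this is dominated by a time-average of $e^{\tau\Delta}u_0(x)$ over $\tau\in(|t|,4|t|)$, or more simply by $\Heat[\nu]$ at a shifted time. Reflection $t\mapsto -t$ preserves capacity, and a dilation in time preserves capacity up to constants, so (v) transfers to (iv). Alternatively, and this is the route we will take if the pointwise comparison is awkward, we go through the trace inequality form: (v) plus Proposition \ref{prop:equiv} type duality gives $\nu(E)\lesssim{\rm cap}(E)$ directly. Testing (v) on $E$ and its time-translate $E+(0,\delta)$ yields (iii) via $\Heat[\nu]\gtrsim$ cylinder averages.

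\textbf{Step 3: (ii)$\Leftrightarrow$(iii).} We compare ${\rm Cap}_{{\bf I}_{2/b},b'}(K)$ with ${\rm cap}_{2,1,b'}(K\times\{0\})$; it suffices to show these are comparable, since $\nu$ lives on $\{t=0\}$ and we may restrict $E$ to $E\cap\{t=0\}$. This is a trace statement: the restriction to $t=0$ of $\mathbb{I}_2[f]$ with $f\in L^{b'}(\R^{n+1})$ is ${\bf I}_{2/b}$ of an $L^{b'}(\R^n)$ function. Indeed $2-2/b'=2/b$, and integrating the parabolic kernel in $s$ gives $\int (|s|+|x-y|^2)^{-n/2}g(y,s)\,ds$; Hölder in $s$ yields $\lesssim{\bf I}_{2/b}[\|g(y,\cdot)\|_{b'}]$-type bounds. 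Conversely, given $h\ge0$ on $\R^n$ we set $f(y,s)=h(y)\cdot(\text{cutoff in }s\text{ at scale }\sim$ local$)$. This is the step I expect to be the main obstacle, because a pure product extension does not reproduce the capacity scaling. I would instead prove the measure-level statement through (vi) and Proposition \ref{prop:equiv}'s fourth bullet, namely the Wolff-type condition $\mathbb{I}_2[\mathbb{I}_2[\nu]^b]\lesssim\mathbb{I}_2[\nu]$, and compute both sides explicitly in terms of $u_0(B_r(x))$. These reduce to the elliptic Wolff condition ${\bf I}_{2/b}[({\bf I}_{2/b}u_0)^b]\lesssim {\bf I}_{2/b}u_0$ by integrating out $t$ and using the scaling $b'\cdot 2/b$ versus $2$. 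The elliptic version of Proposition \ref{prop:equiv} (cf. \cite{MV}) then gives (ii)$\Leftrightarrow$(vi).

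\textbf{Step 4: the remaining links.} (i)$\Leftrightarrow$(v) follows from Theorem \ref{BP}. By the Kalton–Verbitsky theory (\cite[Theorem 2.12]{KV}), the Baras–Pierre inequality for $\kappa u_0$ with some $\kappa$ is equivalent to the trace inequality for $\nu$ with respect to $\mathrm{Cap}_{\Heat,b'}$, hence to (iii) by Theorem \ref{thm:cap-equiv}. Finally, (vi)$\Leftrightarrow$(vii) is Lemma \ref{Fchar} with $s=\alpha=2/b$, $r=b'$, $p=b$, combined with \eqref{TraceMAD}. The representation \eqref{pre-u0} follows from (ii): (ii) gives $u_0(B_R)\lesssim R^{n-2/(b-1)}$, so $u_0\in\dot B^{-2/(b-1)}_{\infty,\infty}$ and $u_0$ vanishes at infinity, and we can apply Proposition \ref{BoudReal}.
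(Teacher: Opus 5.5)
Several of your links are sound and agree with the paper:
\begin{itemize}
\item Step 1, i.e.\ (iii)$\Leftrightarrow$(iv) via Proposition~\ref{prop:equiv} and Theorem~\ref{thm:cap-equiv}.
\item (iv)$\Rightarrow$(v), from $e^{t\Delta}u_0=\Heat[\nu]\lesssim\mathbb{I}_2[\nu]$.
\item The H\"older-in-$s$ bound ${\rm Cap}_{\mathbf{I}_{2/b},b'}(K)\lesssim \PRcap{2}{b'}(K\times\{0\})$, which gives (ii)$\Rightarrow$(iii).
\item (ii)$\Leftrightarrow$(vi) via \cite{MV}.
\item (vi)$\Leftrightarrow$(vii) via Lemma~\ref{Fchar}. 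Your derivation of vanishing at infinity from $u_0(B_R(x))\lesssim R^{n-2/(b-1)}$ is a valid substitute for the paper's use of ${\bf I}_{2/b}[u_0](x_0)<\infty$.
\end{itemize}
But this only establishes (ii)$\Rightarrow$(iii)$\Leftrightarrow$(iv)$\Rightarrow$(v) and (ii)$\Leftrightarrow$(vi)$\Leftrightarrow$(vii).

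\textbf{Missing: (i)$\Rightarrow$(ii).} This necessity direction is the part of the theorem the paper proves by hand. Theorem~\ref{BP} (the result attributed to \cite{BP} and \cite[Theorem 2.12]{KV}) only turns (i) into the dual inequality with $h^{b'}/(\widecheck{\Heat}*h)^{b'-1}$ on the right. It says nothing about (v) or about capacities. So ``(i)$\Leftrightarrow$(v) follows from Theorem~\ref{BP}'' is unfounded, and the claimed equivalence of that inequality with a ${\rm Cap}_{\mathsf{H},b'}$ trace inequality is exactly what has to be proved.

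The missing idea is to test with $h=|(-\partial_t-\Delta)(\Phi^{2b'}\varphi_k^{2b'})|\upchi_{[0,\infty)}(t)$, so that $v=\widecheck{\Heat}*h\geq \Phi^{2b'}\varphi_k^{2b'}$ for $t\ge 0$. The exponent $2b'$ is chosen so that, after dividing by $v^{b'-1}\geq(\Phi\varphi_k)^{2b'(b'-1)}$, only $\Phi^{b'}|\partial_t\Phi+\Delta\Phi|^{b'}$ and $|\nabla\Phi|^{2b'}$ remain, plus cutoff errors that vanish as $k\to\infty$. The Gagliardo--Nirenberg bound $\int_{\R^n}|\nabla\Phi|^{2b'}dx\lesssim\|\Phi\|_{L^\infty}^{b'}\int_{\R^n}|D^2\Phi|^{b'}dx$ then gives $\int\Phi(\cdot,0)^{2b'}du_0\lesssim\iint(|\partial_t\Phi|^{b'}+|D^2\Phi|^{b'})$, i.e.\ $u_0(K)\lesssim{\rm cap}_{2,1,b'}(K\times\{0\})$.

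Getting from there to (ii), and likewise your (iii)$\Rightarrow$(ii), needs the hard half ${\rm cap}_{2,1,b'}(K\times\{0\})\lesssim{\rm Cap}_{\mathbf{I}_{2/b},b'}(K)$ of \eqref{0-cap}. The paper takes this from \cite[Corollary 2.20]{Q-H}. Your Step 3 names it as the obstacle, but the proposed reduction by ``integrating out $t$'' in the pointwise condition of Proposition~\ref{prop:equiv} is not an argument: both sides depend on $t$, and integrating in $t$ discards the pointwise information.

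\textbf{No valid route back from (v).} The pointwise comparison in Step 2 is false. Take $u_0=\delta_{y_0}$ and $|t|\ll|x-y_0|^2=R^2$. Then $\mathbb{I}_2[\nu](x,t)\simeq R^{-n}$, while $e^{\tau\Delta}u_0(x)\le(4\pi|t|)^{-n/2}e^{-R^2/(16|t|)}$ for $\tau\in(|t|,4|t|)$. The correct relation is $\mathbb{I}_2[\nu](x,t)\simeq\int_{|t|}^\infty e^{\tau\Delta}u_0(x)\,\frac{d\tau}{\tau}$, an average over all later times; no single time shift works when $u_0$ has mass at many distances. Nothing in your sketch shows that the capacitary bound (v) survives this Hardy-type averaging, and the ``time-translate'' alternative is not carried out.

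The paper never proves (v)$\Rightarrow$(iv). It closes the cycle with (v)$\Rightarrow$(i), citing \cite[Corollary 2.42]{Q-H}, and then uses (i)$\Rightarrow$(ii) as above. Alternatively, one could apply Theorem~\ref{thm:maintheorem} to $\kappa u_0$ with $\kappa$ small, noting that the iteration \eqref{solapp} preserves nonnegativity; this also requires checking that (v) forces $u_0\in\mathcal{S}'(\R^n)$.
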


\begin{proof}  The equivalence  ${\rm (ii)}\Leftrightarrow {\rm (iii)}$ follows from Theorem \ref{thm:cap-equiv} and the fact that 
\begin{equation}\label{0-cap}
{\rm cap}_{\Heat, b'}(K\times\{0\})\ \simeq {\rm Cap}_{\mathbf{I}_{\frac{2}{b}}, b'}(K)
\end{equation}
for all compact sets $K\subset\R^n$ (see \cite[Corollary 2.20]{Q-H}). That ${\rm (iii)} \Leftrightarrow {\rm (iv)}$ follows from Theorem \ref{thm:cap-equiv} and \cite[Theorem 2.36]{Q-H}. 
The implication ${\rm (iv)}\Rightarrow {\rm (v)}$ is obvious and by Theorem \ref{thm:cap-equiv} the implication 
${\rm (v)}\Rightarrow {\rm (i)}$ is a consequence of  \cite[Corollary 2.42]{Q-H}.  Moreover, the equivalence  $\rm{(ii)}\Leftrightarrow (vi)$ was originally obtained in 
\cite[Theorem 2.1]{MV}.
 Thus, it remains to show ${\rm (i)}\Rightarrow {\rm (ii)}$ and ${\rm (vi)} \Leftrightarrow {\rm (vii)}$.

$\bullet$ {\bf Proof of  ${\rm (i)}\Rightarrow {\rm (ii)}$:} Suppose that \eqref{eq:HE-kappa}, with some $\kappa>0$,  has a nonnegative solution $u\in L^q_{\rm loc}(\R^n\times(0,\infty))$.
Then by Theorem \ref{BP} we have 
\begin{equation}\label{BP-cond}
\int_{0}^\infty\int_{\R^n} e^{t\Delta} u_0(x) h(x,t) dxdt\leq  \frac{b-1}{\kappa \, b^{b'}} \int_{0}^\infty\int_{\R^n} \frac{h(x,t)^{b'}}{ [\widecheck{\Heat}*h(x,t)]^{b'-1}} dxdt 
\end{equation}
 for all nonnegative $h\in L^\infty_{c}(\R^n\times (0,\infty))$. 
Observe then that \eqref{BP-cond} also holds for all 
$h\in L^{b'}_{c}(\R^n\times [0,\infty)), h\geq 0,$ by Lebesgue Dominated Convergence Theorem and Fatou's lemma.
For $h\in L^{b'}_{c}(\R^n\times [0,\infty)), h\geq 0$, let $$v(x,t)=\widecheck{\Heat}*h(x,t). $$ 
As $-\partial_t v -\Delta v=h$, by Theorem \ref{BP} and Fubini's theorem, we have 
\begin{align}\label{abstest}
 \int_{\R^n} v(\cdot,0) d u_0 & = \int_{0}^\infty\int_{\R^n} e^{t\Delta}u_0(x) h(x,t) dxdt \\
  & \leq \frac{b-1}{\kappa b^{b'}} \int_{0}^\infty\int_{\R^n} \frac{(-\partial_t v -\Delta v)^{b'}}{v^{b'-1}} dx dt. \nonumber
\end{align}
We now claim  that we can deduce from \eqref{abstest} the following bound
\begin{align}\label{trace-qprime}
 \int_{\R^n}  \Phi(\cdot, 0)^{2b'} d u_0  \leq c(n, b, \kappa)  \int_{0}^\infty\int_{\R^n} |\partial_t \Phi|^{b'} + |D^2 \Phi|^{b'} dx dt
\end{align}
for all $\Phi\in C_c^\infty(\R^{n+1})$ and $0\leq \Phi\leq 1$.   

To this end, let $\varphi_k=\varphi(x/k,t/k), k=1, 2, \dots,$ where $\varphi$ is a cut-off function in $C_c^\infty(\mathbb{B}_2(0))$, $0\leq \phi \leq 1$, and $\varphi=1$ on $\mathbb{B}_1(0)$.
Note that 
 $$ |\partial_t \varphi_k | + |\nabla \varphi_k | + k|\Delta \varphi_k | \leq \frac{C}{k}.  $$
  The applying \eqref{abstest} with $v=\widecheck{\Heat}*h \geq \Phi^{2q'} \varphi_k^{2 q'}$ in $\R^n\times [0,\infty)$, where 
$h= |(-\partial_t -\Delta)(\Phi^{2q'} \varphi_k^{2 q'})|\upchi_{[0,\infty)}(t)$, and noting that
\begin{align*}
(-\partial_t -\Delta) v&=h\leq |(\partial_t +\Delta)(\Phi^{2q'}) \varphi_k^{2 q'}| \\
&\qquad + |(\partial_t +\Delta)(\varphi_k^{2 q'}) \Phi^{2q'}+  2 |\nabla (\Phi^{2q'}) \cdot \nabla (\varphi_k^{2 q'})|,
\end{align*}
 we find
\begin{align*}
 \int_{\R^n} \left [(\Phi \varphi_k)(\cdot,0)\right ]^{2b'} d u_0 
 & \lesssim \int_{0}^\infty\int_{\R^n} \left|(\partial_t  +\Delta)(\Phi^{2b'})\right|^{b'} \varphi_k^{2 b'}  \Phi^{2b'(1-b')} dx dt \\
 &\quad \quad +  \int_{0}^\infty\int_{\R^n} \left|(\partial_t  +\Delta)(\varphi_k^{2 b'}) \right|^{b'} \varphi_k^{2 b'(1-b')}  \Phi^{2b'} dx dt \\
 &\quad \quad +  \int_{0}^\infty\int_{\R^n} \left|\nabla (\varphi_k^{2 b'}) \right|^{b'} \varphi_k^{2b'(1-b')}  \Phi^{b'}|\nabla \Phi|^{b'} dx dt. 
\end{align*}
Here the constant $c$ depends only on $b$ and $\kappa$.
Thus, sending $k\rightarrow\infty$, obtain
\begin{align*}
& \int_{\R^n}  \Phi(\cdot, 0)^{2b'} d u_0 
  \lesssim \int_{0}^\infty\int_{\R^n} \left|(\partial_t  +\Delta)(\Phi^{2b'})\right|^{b'}   \Phi^{2b'(1-b')} dx dt \\
 & \qquad \lesssim \int_{0}^\infty\int_{\R^n}|\partial_t \Phi +\Delta \Phi|^{b'} \Phi^{b'} dx dt +   \int_{0}^\infty\int_{\R^n}   |\nabla \Phi|^{2q'} dx dt,
\end{align*}
 where we used the equality 
  $$(\partial_t +\Delta)(\Phi^{2b'})=  2b' \Phi^{2b'-1} (\partial_t \Phi  + \Delta \Phi)  + 2b'(2b'-1) \Phi^{2b'-2} |\nabla \Phi|^2.$$
At this point, applying Gagliardo--Nirenberg interpolation inequality (see \cite{N, FFRS}) in the form 
$$\int_{\R^n} |\nabla \Phi|^{2q'} dx \lesssim \left\| \Phi \right\|_{L^\infty(\R^n)}^{b'} \int_{\R^n} |D^2 \Phi|^{b'} dx, \qquad \forall t\in \R,   $$ 
and the assumption $0\leq \Phi\leq 1$, we obtain the claim  \eqref{trace-qprime}.

Inequality \eqref{trace-qprime} now implies that 
$$u_0(K) \lesssim\, {\rm cap}_{2,1, b'}(K\times \{0\})$$
for any compact set $K\subset\R^n$. Thus, in view of Theorem \ref{thm:cap-equiv} and \eqref{0-cap}, we obtain $\rm{(ii)}$.

$\bullet$ {\bf Proof of ${\rm (vi)} \Leftrightarrow {\rm (vii)}$:} The implication ${\rm (vii)} \Rightarrow {\rm (vi)}$ follows from Lemma \ref{Fchar}.
Conversely, by Lemma \ref{Fchar}, we see that 
 ${\rm (vi)}$ implies that $$u_0\in \TrMs{-\frac{2}{b}}{\frac{2}{b}}{b'}{b}{q}(\R^n)\subset B^{-\frac{2}{b-1}}_{\infty, \infty}(\R^n)$$ for any $0<q\leq \infty$.
 Moreover, the bound \eqref{I2b} implies that ${\bf I}_{\frac2b}[u_0]<+\infty$  almost everywhere. In particular,  
 $$\int_{\R^n} \frac{1}{|x_0-y|^{n-\frac2b}} d u_0(y)<+\infty$$ 
 for some $x_0\in\R^n$. Thus, $u_0$ vanishes at infinity, \textcolor{red}{i.e.,}   $u_0\in \mathcal{S}_0'(\R^n)$. Then by \cite[Proposition 4.6]{Bou}, $u_0$ has the representation \eqref{pre-u0}.
\end{proof}

\section{Semilinear parabolic equations with distributional initial data}

Let $1<r< (n+2)/2$ and $p>0$. Inspired by Theorem \ref{BP-Hung}, we define a space 
$$Y_{p, r}=\{u\in L^p_{\rm loc}(\R^{n+1}_{+}): \|u\|_{Y_{p, r}}<+\infty \},$$
where 
\begin{align*}
\|u\|_{Y_{p, r}}:= \sup_{E\subset \R^{n+1}_+} \left(\frac{ \iint_{E}|u(x,t)|^pdx dt}{{\rm cap}_{2,1, r}(E)}  \right)^{1/p},
\end{align*}
with the supremum being taken over all compact subsets $E$ of $\R^{n+1}_{+}=\R^n\times (0, \infty)$. 
Also, we define the space 
$$Z_{p, r}(\R^n):=\left\{f\in \mathcal{S}'(\R^n): \|f\|_{Z_{p,r}(\R^n)}:= \| e^{t \Delta} f\|_{Y_{p, r}}<+\infty \right\}.$$

For $b>1+\frac 2 n$, $Z_{b, b'}(\R^n)$ will be our space of initial data for the semilinear parabolic equation \eqref{ABC}.
To show its significance, we will now demonstrate that it  contains all of the spaces of initial data that were known  earlier in the literature for the existence of global in time solutions.
 First,  note  that by Theorem \ref{BP-Hung},  for $b>1+\frac 2 n$, we have 
\begin{align*}
Z_{b, b'}(\R^n)\cap \mathfrak{M}^+(\R^n)=\TrMs{-\frac{2}{b}}{\frac{2}{b}}{b'}{b}{q}(\R^n)\cap \mathfrak{M}^+(\R^n)
\end{align*}
for all $0<q\leq \infty$.  

For general signed tempered distributions, we have the following embedding results.
\begin{theorem}\label{BbFbZ}
Let $b>1+ \frac 2 n$. We have
\begin{equation}\label{abcran} 
\dot{B}^{-\frac 2 b}_{b}\mathsf{M}^{\frac{2}{b}, b'}_{b} \hookrightarrow \TrMs{-\frac{2}{b}}{\frac{2}{b}}{b'}{b}{b}(\R^n)  \hookrightarrow  Z_{b, b'}(\R^n).
\end{equation}
Similarly, for $0<\epsilon< n(b-1)/2-1$,
\begin{align*}
 \dot{B}^{-\frac 2 b}_{b}\mathcal{M}^{b(1+\epsilon), \frac{2(1+\epsilon)}{b-1}}  \hookrightarrow \dot{F}^{-\frac 2 b}_{b}\mathcal{M}^{b(1+\epsilon), 
\frac{2(1+\epsilon)}{b-1}}   \hookrightarrow  Z_{b, b'}(\R^n).
\end{align*} 
\end{theorem}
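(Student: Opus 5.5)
The first embedding in \eqref{abcran} should be elementary. The quasi-norm of $\Ms{\frac 2b}{b'}{b}$ is a power of an $L^b$-type trace constant: by \eqref{TraceMAD}, $\|g\|^b_{\Ms{\frac2b}{b'}{b}}\simeq \sup_K \int_K|g|^b/\Rcap{\frac2b}{b'}(K)$, so $\|\,|g|^{1/b}\,\|$ behaves like a lattice norm with $b$-th powers. For $F=(\sum_j (2^{-2j/b}|\Dyadic f|)^b)^{1/b}$ we get
$$\int_K F^b\,dx=\sum_j \int_K (2^{-2j/b}|\Dyadic f|)^b\,dx\le \Rcap{\frac2b}{b'}(K)\sum_j 2^{-2j}\|\Dyadic f\|^b_{\Ms{\frac2b}{b'}{b}}.$$
Taking the supremum over $K$ gives $\|f\|_{\TrMs{-\frac2b}{\frac2b}{b'}{b}{b}}\lesssim\|f\|_{\dot B^{-\frac2b}_b\Ms{\frac2b}{b'}{b}}$. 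The same computation, using $\mathcal M^{b(1+\epsilon),\cdot}$ with $b$-th powers and Minkowski's inequality in $L^{1+\epsilon}$ on balls, handles the first Morrey embedding. The last embedding of the second chain then follows from the first chain together with \eqref{MorreyEmb}, applied with $\alpha=2/b$, $r=b'$, $p=b$, for which $\alpha r=2/(b-1)$.

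The main point is $\TrMs{-\frac2b}{\frac2b}{b'}{b}{b}\hookrightarrow Z_{b,b'}$. By \eqref{LB-M} with $q=b$, $s=-2/b$ and $P=0$ (using the realization of the Corollary), the function $G(x):=(\int_0^\infty |e^{t\Delta}f(x)|^b\,dt)^{1/b}$ satisfies $\|G\|_{\Ms{\frac2b}{b'}{b}}\lesssim\|f\|_{\TrMs{-\frac2b}{\frac2b}{b'}{b}{b}}$. Here $t^{-s/2}=t^{1/b}$ combines with $dt/t$ to give $dt$. By \eqref{TraceMAD}, this bound is the statement that the measure $\mu:=G^b\,dx$ on $\R^n$ satisfies $\mu(K)\lesssim\|f\|^b\,\Rcap{\frac2b}{b'}(K)$. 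Set $\nu=|e^{t\Delta}f|^b\,dxdt$ on $\R^{n+1}_+$. For a compact $E\subset\R^{n+1}_+$ we must show $\nu(E)\lesssim \|f\|^b\,{\rm cap}_{2,1,b'}(E)$. By Proposition \ref{prop:equiv}, with $\beta=2$ and $p=b'$, and by Theorem \ref{thm:cap-equiv}, it suffices to prove the trace inequality
$$\iint \mathbb I_2[h]^{b'}\,d\nu\lesssim \|f\|^b\int h^{b'}\qquad\text{for all } h\ge0.$$
Note that $1<b'<(n+2)/2$ holds precisely because $b>1+2/n$.

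To prove the trace inequality, bound $\mathbb I_2[h](x,t)\le \sup_{s>0}\mathbb I_2[h](x,s)=:Mh(x)$ and write $\iint \mathbb I_2[h]^{b'}d\nu\le\int_{\R^n} (Mh)^{b'}G^b\,dx$. It then remains to control $\sup_s \mathbb I_2[h](\cdot,s)$ by ${\bf I}_{2/b}$ of some $g$ with $\|g\|_{L^{b'}(\R^n)}\lesssim\|h\|_{L^{b'}(\R^{n+1})}$, after which the trace inequality for $\mu$ (equivalent to \eqref{TraceMAD}) concludes. The natural choice is $g(x)=(\int|h(x,\tau)|^{b'}d\tau)^{1/b'}$. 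For fixed $s$, Hölder's inequality in the time variable gives
$$\int \frac{h(y,\tau)\,d\tau}{(|s-\tau|+|x-y|^2)^{n/2}}\le g(y)\Big(\int\frac{d\tau}{(|\tau|+|x-y|^2)^{nb/2}}\Big)^{1/b}\simeq g(y)\,|x-y|^{-n+2/b}.$$
The time integral converges since $nb/2>1$, and this yields exactly $\mathbb I_2[h](x,s)\lesssim{\bf I}_{2/b}[g](x)$ uniformly in $s$.

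I expect the main obstacle to be the bookkeeping in this Hölder step, together with justifying the passage between trace inequalities and capacity conditions for nonsmooth $\nu$. Proposition \ref{prop:equiv} and Theorem \ref{thm:cap-equiv} (compact $E$, with $E$ contained in $\R^{n+1}_+$) should cover the latter. The other point needing care is the identification of $f$ with its realization, so that \eqref{LB-M} holds with $P=0$.
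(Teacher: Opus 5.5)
Your proposal is correct. For the main embedding $\TrMs{-\frac2b}{\frac2b}{b'}{b}{b}(\R^n)\hookrightarrow Z_{b,b'}(\R^n)$ it takes a different route from the paper.

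Both arguments start from \eqref{LB-M} to get $\|G\|_{\Ms{\frac2b}{b'}{b}}\lesssim\|f\|_{\TrMs{-\frac2b}{\frac2b}{b'}{b}{b}}$. The paper then argues at the level of sets. It bounds $\iint_E|e^{t\Delta}f|^b$ by $\int_K G^b$, where $K$ is the projection of $E$ onto $\{t=0\}$. It then passes from $\Rcap{\frac2b}{b'}(K)$ to ${\rm cap}_{2,1,b'}(E)$ using the cited equivalence \eqref{0-cap} for $K\times\{0\}$ together with Lemma \ref{LipandCap} applied to the projection. That lemma's proof relies on the dual definition of capacity and a parabolic Wolff inequality.

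You instead verify the trace inequality for $\nu$ directly. Your H\"older-in-time bound $\sup_s\PR{2}[h](x,s)\lesssim{\bf I}_{2/b}[g](x)$, with $\|g\|_{L^{b'}(\R^n)}=\|h\|_{L^{b'}(\R^{n+1})}$, is, read at the level of capacities, a one-line proof of $\Rcap{\frac2b}{b'}(K)\lesssim\PRcap{2}{b'}(E)$. This is exactly the combination of one direction of \eqref{0-cap} with Lemma \ref{LipandCap} that the paper uses.

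What your route buys is self-containedness. You need only the trivial implication (trace inequality $\Rightarrow$ capacity bound) in Proposition \ref{prop:equiv}, plus Theorem \ref{thm:cap-equiv}. So the difficulty you anticipate about nonsmooth $\nu$ does not actually arise. You also avoid Lemma \ref{LipandCap}, whose argument needs the map to be Lipschitz with respect to the parabolic metric (as the projection is). The paper's route is shorter on the page, and the two-sided equivalence \eqref{0-cap} is needed anyway in Theorem \ref{BP-Hung}.

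Your treatment of the Besov-to-Triebel--Lizorkin and Morrey embeddings, which the paper leaves implicit, is also fine. Directly from the trace-inequality definition of $\Ms{\frac2b}{b'}{b}$ one even gets $\|(\sum_j|g_j|^b)^{1/b}\|^b_{\Ms{\frac2b}{b'}{b}}\le\sum_j\|g_j\|^b_{\Ms{\frac2b}{b'}{b}}$ with constant one.
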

\begin{proof} By \eqref{MorreyEmb}, it suffices to prove only the second embedding in \eqref{abcran}. Let $f\in \TrMs{-\frac{2}{b}}{\frac{2}{b}}{b'}{b}{b}(\R^n)$. For any compact set $E\subset \R^n\times [0, \infty)$, we have 
\begin{align*}
\iint_{E} |e^{t\Delta} f(x)|^b dx dt \leq  \int_K \int_0^\infty |e^{t\Delta} f(x)|^b dt dx, 
\end{align*}
where $K\subset\R^n$ is such that $K\times\{0\}=\{(x,0): \exists\, t\geq 0 {\rm ~such ~ that ~} (x,t)\in E\}$ is the projection on $E$ onto the hyperplane $\{t=0\}$.
Thus, by Theorem \ref{temChar}, it follows that 
\begin{align*}
\iint_{E} |e^{t\Delta} f(x)|^b dx dt & \lesssim \|f\|^b_{\TrMs{-\frac{2}{b}}{\frac{2}{b}}{b'}{b}{b}(\R^n)} {\rm Cap}_{{\bf I}_{\frac{2}{b}}, b'}(K)\\
& \lesssim \|f\|^b_{\TrMs{-\frac{2}{b}}{\frac{2}{b}}{b'}{b}{b}(\R^n)} {\rm cap}_{2,1, b'}(K\times\{0\}) \\
& \lesssim \|f\|^b_{\TrMs{-\frac{2}{b}}{\frac{2}{b}}{b'}{b}{b}(\R^n)}   {\rm cap}_{2,1, b'}(E),
\end{align*}
where we used \eqref{0-cap} in the second inequality. In the last inequality, we used the fact that  ${\rm cap}_{ 2,1, b'}(K\times\{0\}) \lesssim {\rm cap}_{ 2,1, b'}(E)$, which follows from the  Lemma \ref{LipandCap} below.
This completes the proof of the theorem.
\end{proof}

\begin{lemma}\label{LipandCap}
Let $\beta\in (0, n+2)$ and $1<p<\frac{n+2}{\beta}$. Assume that $E\subset\R^{n+1}$ is a compact set and $\Phi: E\rightarrow \R^{n+1}$ is a Lipschitz map with Lipschitz constant $L$, i.e., 
$$|\Phi(z_1)-\Phi(z_2)|\leq L |z_1-z_2|$$
for all $z_1, z_2\in E$. Then we have
\begin{equation*}
{\rm Cap}_{\mathbb{I}_\beta, p}(\Phi(E))\leq A L^{n+2-\beta p}  {\rm Cap}_{\mathbb{I}_\beta, p}(E).
\end{equation*}
\end{lemma}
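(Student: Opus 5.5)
The plan is to use a dual (measure-theoretic) description of ${\rm Cap}_{\mathbb{I}_\beta,p}$ together with parabolic Wolff potentials, and to lift measures from $\Phi(E)$ back to $E$. I will read the Lipschitz condition with respect to the parabolic distance $\delta((x,t),(y,s))=|x-y|+|t-s|^{1/2}$. This is the reading needed in the application, where $\Phi(x,t)=(x,0)$ has constant $1$. It is also the reading under which the exponent $n+2-\beta p$ (the parabolic homogeneous dimension $N=n+2$ minus $\beta p$) comes out. Define the parabolic Wolff potential
$$\mathbb{W}^{\mu}_{\beta,p}(z)=\int_0^\infty\Big(\frac{\mu(Q_r(z))}{r^{n+2-\beta p}}\Big)^{p'-1}\frac{dr}{r}.$$
The first step is to record the parabolic Hedberg--Wolff principle. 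It gives $\|\mathbb{I}_\beta\mu\|_{L^{p'}}^{p'}\simeq\int\mathbb{W}^{\mu}_{\beta,p}\,d\mu$. It also gives the dual formula
$${\rm Cap}_{\mathbb{I}_\beta,p}(K)\simeq\sup\Big\{\frac{\mu(K)^{p}}{\big(\int \mathbb{W}^{\mu}_{\beta,p}\,d\mu\big)^{p-1}}:\ \mu\in\mathfrak{M}^+(K)\Big\}$$
for compact $K$. I intend to quote this from the literature: Hedberg--Wolff in spaces of homogeneous type, or \cite{Q-H} for the parabolic Riesz kernel. The proof of the Wolff inequality is dyadic and uses only the doubling of cylinders, so it transfers verbatim.

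The second step fixes $\mu\in\mathfrak{M}^+(\Phi(E))$ and lifts it. Since $E$ is compact and $\Phi$ is continuous, there is $\sigma\in\mathfrak{M}^+(E)$ with $\Phi_\#\sigma=\mu$. One way to get it is a Borel measurable section of $\Phi:E\to\Phi(E)$, which exists by a standard measurable selection for continuous maps on compact metric spaces. Another is Hahn--Banach applied to $C(\Phi(E))\hookrightarrow C(E)$, $g\mapsto g\circ\Phi$. In particular $\sigma(E)=\mu(\Phi(E))$.

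The key comparison comes from the Lipschitz bound $\Phi(Q_r(z)\cap E)\subset Q_{cLr}(\Phi(z))$. It gives
$$\sigma(Q_r(z))\le\sigma\big(\Phi^{-1}(Q_{cLr}(\Phi z))\big)=\mu(Q_{cLr}(\Phi z)).$$
Substituting $\rho=cLr$ in the Wolff integral yields $\mathbb{W}^\sigma_{\beta,p}(z)\le (cL)^{(n+2-\beta p)(p'-1)}\mathbb{W}^\mu_{\beta,p}(\Phi z)$ for $z\in E$. Integrating against $\sigma$ and using $\Phi_\#\sigma=\mu$, I get
$$\int\mathbb{W}^\sigma\,d\sigma\le (cL)^{(n+2-\beta p)(p'-1)}\int\mathbb{W}^\mu\,d\mu.$$

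The third step concludes. Raising the energy bound to the power $p-1$ uses $(p'-1)(p-1)=1$ and turns the factor into $(cL)^{n+2-\beta p}$. Hence
$$\frac{\mu(\Phi(E))^p}{(\int\mathbb{W}^\mu d\mu)^{p-1}}\le (cL)^{n+2-\beta p}\frac{\sigma(E)^p}{(\int\mathbb{W}^\sigma d\sigma)^{p-1}}\lesssim L^{n+2-\beta p}\,{\rm Cap}_{\mathbb{I}_\beta,p}(E).$$
Taking the supremum over $\mu$ gives the lemma, with $A=A(n,\beta,p)$.

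The main obstacle is the first step: justifying the dual capacity formula with Wolff energies in the parabolic setting, with constants depending only on $n,\beta,p$. If a direct citation is unavailable, I would derive it in three pieces. Start from the general duality ${\rm Cap}(K)^{1/p}=\sup\{\mu(K):\|\mathbb{I}_\beta\mu\|_{L^{p'}}\le1\}$ (minimax, as in Adams--Hedberg). Then prove the parabolic Wolff inequality by decomposing $\mathbb{I}_\beta\mu$ over dyadic parabolic cubes. Finally handle the lower bound with a parabolic good-$\lambda$ argument. Once that is in place, the lifting and comparison in steps two and three are elementary.
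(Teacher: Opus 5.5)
Your proposal is correct and is essentially the paper's proof. The paper also combines the dual characterization ${\rm Cap}_{\mathbb{I}_\beta,p}(K)^{1/p}=\sup\{\mu(K):\mu\in\mathfrak{M}^+(K),\ \|\mathbb{I}_\beta[\mu]\|_{L^{p'}}\le 1\}$ with the parabolic Wolff inequality, and then runs the Adams--Hedberg argument for Lipschitz images (their Theorem 5.2.1), which is exactly your lifting $\Phi_\#\sigma=\mu$ followed by $\sigma(Q_r(z))\le\mu(Q_{cLr}(\Phi(z)))$ and the rescaling of the Wolff potential.

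Reading the Lipschitz hypothesis in the parabolic metric is not just convenient but necessary, and the cited argument implicitly uses it. With the Euclidean norm the statement fails: for $1<n+2-\beta p<2$, the $1$-Lipschitz map $(x,t)\mapsto(0,\dots,0,x_1)$ sends a spatial segment (capacity zero) onto a time segment (positive capacity). The projection $(x,t)\mapsto(x,0)$ used in Theorem~\ref{BbFbZ} is $1$-Lipschitz in both senses, so the application is unaffected.
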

\begin{proof} The proof is based on the following dual definition of capacity:
$${\rm Cap}_{\mathbb{I}_\beta, p}(K)^{\frac 1 p} =\sup\{\mu(K): \mu \in \mathfrak{M}^+(K), \|\mathbb{I}_\beta[\mu]\|_{L^{p'}(\R^{n+1} )}\leq 1 \}$$
for any compact set $K\subset\R^{n+1}$, see \cite[Theorem 2.5.1]{AH}. We also need   the parabolic version of Wolff's inequality:
$$\|\mathbb{I}_\beta[\mu]\|_{L^{p'}(\R^{n+1})}^{p'} \simeq \int_{\R^{n+1}} \int_0^\infty \left( \frac{\mu(Q_r(x,t))}{r^{n+2-\beta p}} \right)^{\frac{1}{p-1}} \frac{dr}{r} d\mu(x,t),$$
see \cite[Theorem 2.7]{CO}. With these, the proof goes as in the proof  of \cite[Theorem 5.2.1]{AH}  that was presented for Bessel capacities (see also \cite[Corollary 5.2.3]{AH}.
\end{proof}

Note that Theorem \ref{BbFbZ} implies that 
$$\dot{B}^{-2/b}_{nb(b-1)/2, b} \hookrightarrow \dot{F}^{-2/b}_{nb(b-1)/2, b} \hookrightarrow Z_{b, b'}(\R^n), \quad b> 1+\frac 2 n,$$
which can be used to answer the question of Brandolese and Cortez in \cite{BCo}.

As another consequence of Theorem \ref{BbFbZ},  we have  the following result for locally finite  signed measures, which covers the case of Morrey spaces considered in \cite{Sou}. In fact, the case of 
signed measures has been considered in \cite{Q-H}.

\begin{corollary} \label{MorreyandCapspace} 
Let $b> 1+\frac{2}{n}$ and let $\mu \in \mathfrak{M}(\R^n)$ be such that 
$$|\mu|(K)\leq A\,  {\rm Cap}_{ {\bf I}_{\frac{2}{b}, b'}}(K)$$
for all compact sets $K\subset\R^n$.  Then we have 
$$ \|\mu\|_{Z_{b, b'}(\R^n)}\leq C \sup_{K\subset\R^n} \left\{ \frac{|\mu|(K)}{{\rm Cap}_{ {\bf I}_{\frac{2}{b}, b'}}(K)} \right\},$$ 
where the supremum is taken over all compact sets $K\subset\R^n$ such that ${\rm Cap}_{ {\bf I}_{\frac{2}{b}, b'}}(K)\not=0 $.
In particular, for any $1< q \leq \frac{n(b-1)}{2}$, we have 
\begin{equation}\label{M-Souplet}
\mathcal{M}^{q, \frac{2 q}{b-1}}(\R^n) \hookrightarrow Z_{b, b'}(\R^n).
\end{equation}
\end{corollary}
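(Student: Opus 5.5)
The plan is to reduce the signed case to the nonnegative measure $|\mu|$ via the pointwise bound $|e^{t\Delta}\mu|\le e^{t\Delta}|\mu|$, and then apply Theorem \ref{BP-Hung} to $|\mu|$. Set
$$A_0:=\sup_{K}\frac{|\mu|(K)}{{\rm Cap}_{{\bf I}_{\frac2b},b'}(K)}.$$
Then $|\mu|$ satisfies (ii) of Theorem \ref{BP-Hung} with $c_1=A_0$. Since the heat kernel is positive, $|e^{t\Delta}\mu(x)|\le e^{t\Delta}|\mu|(x)$ pointwise. This gives
$$\|\mu\|_{Z_{b,b'}}^b=\sup_E\frac{\iint_E|e^{t\Delta}\mu|^b}{{\rm cap}_{2,1,b'}(E)}\le \sup_E\frac{\iint_E (e^{t\Delta}|\mu|)^b}{{\rm cap}_{2,1,b'}(E)},$$
so it suffices to bound the right-hand side by $C A_0^b$. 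We also note that $\mu\in\mathcal S'$: the capacity condition forces ${\bf I}_{2/b}[|\mu|]<\infty$ somewhere, which gives the growth control $|\mu|(B_R)\lesssim R^{n-2/b}$ from below, and this should suffice to make $\mu$ tempered.

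The quantitative step is the chain (ii)$\Rightarrow$(iii)$\Rightarrow$(iv)$\Rightarrow$(v) of Theorem \ref{BP-Hung}, and I would track constants through it. First, (ii)$\Rightarrow$(iii) holds with $c_2\simeq A_0$, by \eqref{0-cap} together with Theorem \ref{thm:cap-equiv}. Next, the implication (iii)$\Rightarrow$(iv) is Proposition \ref{prop:equiv} with $\beta=2$, $p=b'$ (allowed since $b'<(n+2)/2$ is exactly $b>1+2/n$), applied to $\nu=|\mu|\otimes\delta_0$ after comparing ${\rm cap}_{2,1,b'}$ with $\PRcap{2}{b'}$. It gives $C_3\simeq C_2^{p/(p-1)}=C_2^{b}$, so $c_3\lesssim A_0^b$. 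Finally, (iv)$\Rightarrow$(v) follows from the pointwise bound $e^{t\Delta}|\mu|(x)=\Heat[|\mu|\otimes\delta_0](x,t)\lesssim \mathbb I_2[|\mu|\otimes\delta_0](x,t)$. Hence $\|\mu\|_{Z_{b,b'}}\lesssim A_0$. Homogeneity is a useful consistency check: replacing $\mu$ by $\lambda\mu$ scales both sides linearly, as it should.

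For \eqref{M-Souplet}, I would take $f\in\mathcal M^{q,\frac{2q}{b-1}}$ and apply the first part to $d\mu=f\,dx$. The needed estimate is $\int_K|f|\,dx\lesssim\|f\|\,{\rm Cap}_{{\bf I}_{2/b},b'}(K)$ for $q>1$. By Proposition \ref{W-char} this is equivalent to $|f|\in\Ms{\frac2b}{b'}{1}$, that is, to the trace inequality $\int{\bf I}_{2/b}[h]^{b'}|f|\lesssim\int h^{b'}$. This is where the Fefferman--Phong type embedding \eqref{MorreyEmb} enters, with $p=1$, $\alpha=2/b$, $r=b'$, $\alpha r=\frac{2}{b-1}$. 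That embedding gives $\mathcal M^{1+\epsilon,\frac{2(1+\epsilon)}{b-1}}\hookrightarrow\Ms{\frac2b}{b'}{1}$ for $0<\epsilon<\frac{n(b-1)}{2}-1$. Choosing $1+\epsilon=q$ covers $1<q<n(b-1)/2$.

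The main obstacle is the endpoint $q=n(b-1)/2$, where $\mathcal M^{q,\frac{2q}{b-1}}=L^{n(b-1)/2}$. There I would instead use \eqref{WeakEmb} with $p=1$: its exponent is $np/(\alpha r)=n(b-1)/2$, so $L^{n(b-1)/2}\subset L^{n(b-1)/2,\infty}\hookrightarrow\Ms{\frac2b}{b'}{1}$. Then $|\mu|(K)\lesssim\|f\|\,{\rm Cap}(K)$ follows in both cases, and the corollary follows from the first part.
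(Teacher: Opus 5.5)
Your proof is correct and follows the paper's approach. You reduce to $|\mu|$ by positivity of the heat kernel and then use the quantitative content of Theorem \ref{BP-Hung}, running (ii)$\Rightarrow$(iii)$\Rightarrow$(iv)$\Rightarrow$(v) with tracked constants, where $c_3\lesssim A_0^b$ comes from Proposition \ref{prop:equiv}; the paper instead passes through the Triebel--Lizorkin norm in (vii). You also handle the endpoint $q=n(b-1)/2$, where $\mathcal{M}^{q,2q/(b-1)}=L^{n(b-1)/2}$, separately via \eqref{WeakEmb}. The paper's one-line appeal to Fefferman--Phong leaves this case implicit, since \eqref{MorreyEmb} requires $\epsilon<\frac{n(b-1)}{2}-1$ strictly.
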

  
\begin{proof} By Theorem \ref{BP-Hung} (see also \eqref{FandR}), we have 
\begin{align*}
\|\mu\|_{Z_{b, b'}(\R^n)}\leq \| |\mu| \|_{Z_{b, b'}(\R^n)} \simeq \| |\mu| \|_{\TrMs{-\frac{2}{b}}{\frac{2}{b}}{b'}{b}{q}(\R^n)} \simeq \sup_{K\subset\R^n} \left\{ \frac{|\mu|(K)}{{\rm Cap}_{ {\bf I}_{\frac{2}{b}, b'}}(K)} \right\}.
\end{align*}

The embedding \eqref{M-Souplet} follows from the above inequality and Fefferman-Phong inequality. 
\end{proof}

We next show the relation of $Z_{b, b'}(\R^n)$ to certain Koch-Tataru type spaces.

\begin{theorem} \label{KT-FP}
Let $b>1+\frac 2 n$ and  $\epsilon \geq 0$.   Define the space 
$$T_{b, \epsilon}(\R^n):=\left\{f\in \mathcal{S}'(\R^n): \|f\|_{T_{b,\epsilon}(\R^n)}<+\infty \right\},$$
where 
$$\|f\|_{T_{b,\epsilon}(\R^n)}:= \left(\sup_{x\in\R^n,\,  r>0}  r^{\frac{2b(1+\epsilon)}{b-1}-n-2} \int_0^{r^2} \int_{B_r(x)} |e^{t\Delta} f|^{b(1+\epsilon)} dy dt \right)^{\frac{1}{b(1+\epsilon)}}.$$

Then we have 
$$T_{b,\epsilon}(\R^n) \hookrightarrow Z_{b, b'}(\R^n) \hookrightarrow T_{b,0}(\R^n), \qquad \epsilon>0.$$
\end{theorem}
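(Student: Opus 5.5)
The plan is to prove the two embeddings separately, working with $u=e^{t\Delta}f$ extended by zero to $t\le 0$. By Theorem \ref{thm:cap-equiv}, ${\rm cap}_{2,1,b'}$ may be replaced by $\PRcap{2}{b'}$ throughout. Note that $b'<\frac{n+2}{2}$ is equivalent to $b>1+\frac2n$, so this capacity is well defined.

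\emph{Second embedding, $Z_{b,b'}\hookrightarrow T_{b,0}$.} Take the compact set $E=\overline{B_r(x)}\times[\delta,r^2]$ and let $\delta\to0$. By scaling and translation invariance of $\PRcap{2}{b'}$, and since parabolic cylinders are preserved under the parabolic dilation, we get $\PRcap{2}{b'}(E)\lesssim r^{n+2-2b'}$. Therefore
$$\int_0^{r^2}\!\!\int_{B_r(x)}|e^{t\Delta}f|^b\,dy\,dt\lesssim \|f\|_{Z_{b,b'}}^b\, r^{n+2-2b'}.$$
This is exactly the $T_{b,0}$ bound, because $\frac{2b}{b-1}=2b'$.

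\emph{First embedding, $T_{b,\epsilon}\hookrightarrow Z_{b,b'}$ for $\epsilon>0$.} This is a parabolic Fefferman–Phong argument. Set $g=|e^{t\Delta}f|^b\upchi_{t>0}$ and $s=1+\epsilon$.

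First, I will check that the $T_{b,\epsilon}$ condition controls $g$ on every parabolic cylinder $Q_r(x,t)$, not only on those touching $t=0$. For a cylinder with $t>r^2$, I will cover it by the region $B_r(x)\times(0,t+r^2/2)$, or better use the semigroup property. The cleanest route is the identity $e^{t\Delta}f=e^{(t-\tau)\Delta}e^{\tau\Delta}f$, but I expect it is easier to argue directly as follows. Write $R^2\simeq t+r^2$ and cover $B_r(x)\times(t-r^2/2,t+r^2/2)$ by the cylinder $B_R\times(0,R^2)$. This loses too much, so instead I will use time translation. The function $e^{\tau\Delta}f$ with $\tau=t-r^2/2$ again satisfies the $T_{b,\epsilon}$ bound with a comparable norm. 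This should follow from $L^\infty$-in-time smoothing: the $T_{b,\epsilon}$ norm is controlled by the supremum over time-shifted data, via the maximum principle and subsolution estimates for $|u|^{bs}$. I expect this to be the main obstacle. If it fails, the fallback is to dominate $\PR{2}$ of $g$ restricted to $t>0$ by potentials built only from cylinders based at $t=0$, using the Carleson-type structure.

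Granting the cylinder bound $\iint_{Q_r}g^{s}\lesssim r^{n+2-2b's}$, Hölder's inequality gives the Morrey-type condition
$$\Big(\fint_{Q_r} g^{s}\Big)^{1/s}\lesssim r^{-2b'},$$
that is, $g$ lies in the parabolic Morrey space $L^{s,\,2b's}$. I then invoke the parabolic analogue of the Fefferman–Phong inequality (Chiarenza–Frasca / Sawyer–Wheeden in the space of homogeneous type $(\R^{n+1},\text{parabolic metric})$):
$$\int \PR{2}[h]^{b'}g\lesssim \|g\|\int h^{b'}.$$
Proposition \ref{prop:equiv} converts this into $\iint_E g\lesssim \PRcap{2}{b'}(E)$, and hence into $\|f\|_{Z_{b,b'}}^b\lesssim\|f\|_{T_{b,\epsilon}}^b$. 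Instead of citing Fefferman–Phong, the inequality can also be verified directly through the fourth condition of Proposition \ref{prop:equiv}: the bound $\PR{2}[\PR{2}[g]^{b'}]\lesssim \PR{2}[g]$ follows from the Hedberg estimate of Lemma \ref{Hed} combined with the Morrey bound, using $\PR{2}[g]\lesssim \mathbb{M}_{2b'}\ldots$. I will do that verification only if the citation needs justification.
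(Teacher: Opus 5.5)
\textbf{Verdict: genuine gap.} Your second embedding is the paper's argument. Your plan for the first embedding also has the paper's architecture: extend the $T_{b,\epsilon}$ bound to all parabolic cylinders, then apply the parabolic Fefferman--Phong theorem of Sawyer--Wheeden, Proposition \ref{prop:equiv}, and Theorem \ref{thm:cap-equiv}. The gap is that the extension step is never proved, and it is the only nontrivial part. You call it the main obstacle and offer only unspecified ``maximum principle and subsolution estimates'' and a vague Carleson-type fallback. The claim that $e^{\tau\Delta}f$ has comparable $T_{b,\epsilon}$ norm is just a restatement of what must be shown.

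You also discard too quickly the covering $B_r(x)\times(t_0,t_0+r^2)\subset B_R(x)\times(0,R^2)$ with $R=\sqrt{t_0+r^2}$. It loses nothing when $t_0\le r^2$: then $r\le R\le \sqrt2\,r$, so $R^{\,n+2-2b's}\lesssim r^{\,n+2-2b's}$ with $s=1+\epsilon$, whatever the sign of the exponent. It only fails when $t_0\gg r^2$.

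For $t_0>r^2$ the missing ingredient is the pointwise decay
\[
\|e^{t\Delta}f\|_{L^\infty}\lesssim t^{-\frac{1}{b-1}}\|f\|_{T_{b,\epsilon}(\R^n)},\qquad t>0 .
\]
This follows from Lemma \ref{ZtoBes} combined with the H\"older embedding $T_{b,\epsilon}\hookrightarrow T_{b,0}$. The proof there writes $e^{t\Delta}f=e^{(t-\tau)\Delta}e^{\tau\Delta}f$, averages over $\tau\in(0,t/2)$, applies H\"older, and sums Gaussian tails over cubes of side $\sqrt t$. Alternatively, a mean value inequality for the subcaloric function $|e^{t\Delta}f|^{bs}$ on $B_{\sqrt t}(x)\times(0,t)$ gives the same bound; this is presumably where your subsolution remark was heading, but it has to be stated and used. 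With the decay estimate,
\[
\int_{t_0}^{t_0+r^2}\!\!\int_{B_r(x)}|e^{t\Delta}f|^{bs}\,dy\,dt\lesssim \|f\|_{T_{b,\epsilon}}^{bs}\, r^{n}\int_{t_0}^{t_0+r^2}t^{-\frac{bs}{b-1}}\,dt\le \|f\|_{T_{b,\epsilon}}^{bs}\, r^{n+2}\,t_0^{-\frac{bs}{b-1}}\le \|f\|_{T_{b,\epsilon}}^{bs}\, r^{\,n+2-2b's}.
\]
The two cases together give $\iint_{Q_r}g^{s}\lesssim \|f\|_{T_{b,\epsilon}}^{bs}\,r^{\,n+2-2b's}$ for every parabolic cylinder, including those meeting $t\le 0$, where $g$ vanishes. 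From there your argument coincides with the paper's.
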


\begin{proof} The embedding $Z_{b, b'}(\R^n) \hookrightarrow T_{b,0}(\R^n)$ follows from the fact that ${\rm cap}_{1,2, b'}(Q_r)=c\,  r^{n+2-2b'}, r>0$. On the other hand, for any $\epsilon\geq0$, we have
\begin{equation}\label{B-b-1}
\| e^{t\Delta} f \|_{L^\infty(\R^n)}\lesssim \| f\|_{T_{b,\epsilon}(\R^n)} t^{\frac{-1}{b-1}}, \qquad t>0.
\end{equation}
For this, see  Lemma \ref{ZtoBes} below. To prove the embedding $T_{b,\epsilon}(\R^n) \hookrightarrow Z_{b, b'}(\R^n)$, by H\"older's inequality, we may assume that $2b(1+\epsilon)/(b-1)\leq n+2$. 
With this, we now claim that 
\begin{align}\label{Claim-FP}
 \int_{t_0}^{t_0+r^2} \int_{B_r(x)} |e^{t\Delta} f|^{b(1+\epsilon)} dy dt \lesssim\|f\|_{T_{b,\epsilon}(\R^n)}^{b(1+\epsilon)} r^{n+2-\frac{2b(1+\epsilon)}{b-1}}
 \end{align}
for all $t_0 \geq 0$, $r>0$,  $x\in\R^n$, and $\epsilon\geq 0$.  Indeed, if $t_0\leq r^2$, then 
 \begin{align*}
 \int_{t_0}^{t_0+r^2} \int_{B_r(x)} |e^{t\Delta} f|^{b(1+\epsilon)} dy dt &\lesssim \int_{0}^{t_0+r^2} \int_{B_{\sqrt{t_0+r^2}}(x)} |e^{t\Delta} f|^{b(1+\epsilon)} dy dt\\
 &\lesssim \|f\|_{T_{b,\epsilon}(\R^n)}^{b(1+\epsilon)}  \left(\sqrt{t_0+r^2}\right )^{n+2-\frac{2b(1+\epsilon)}{b-1}}\\
 &\lesssim \|f\|_{T_{b,\epsilon}(\R^n)}^{b(1+\epsilon)}  r^{n+2-\frac{2b(1+\epsilon)}{b-1}}.
 \end{align*}

On the other hand, if $t_0>r^2$, then  by \eqref{B-b-1},
\begin{align*}
 \int_{t_0}^{t_0+r^2} \int_{B_r(x)} |e^{t\Delta} f|^{b(1+\epsilon)} dy dt &\lesssim  r^n \int_{t_0}^{t_0+r^2} \|e^{t\Delta}  f \|^{b(1+\epsilon)}_{L^\infty(\R^n)} dt\\
 & \lesssim  \|f\|_{T_{b,\epsilon}(\R^n)}^{b(1+\epsilon)} r^n \int_{t_0}^{t_0+r^2}  t^{-\frac{b(1+\epsilon)}{b-1}} dt\\
 &\lesssim \|f\|_{T_{b,\epsilon}(\R^n)}^{b(1+\epsilon)} r^{n+2-\frac{2b(1+\epsilon)}{b-1}}.
 \end{align*}

Thus we obtain the claim \eqref{Claim-FP} as desired.  Note that \eqref{Claim-FP}, with $\epsilon>0$, is a parabolic version of the Fefferman-Phong condition, by \cite[Theorem 4]{SW} we arrive at the embedding
 $T_{b,\epsilon}(\R^n) \hookrightarrow Z_{b, b'}(\R^n)$ provided $\epsilon>0$.
\end{proof}

In \cite{KY}, Kozono and Yamayaki used the Besov-Morrey space $\dot{B}_{\infty}^{\frac{n}{p}-\frac{2}{b-1}} \mathcal{M}^{q, \frac{n q }{p}}$, with certain indices $p$ and $q$, to obtain an existence result
for the Cauchy problem \eqref{ABC}. We now show that this space is also contained in $Z_{b, b'}(\R^n)$. 

\begin{corollary} \label{KYinZ}
Let $b>1+\frac{2}{n}$ and let $p, q$ be real numbers such that  $$b\leq q\leq p  {\rm ~and~} n(b-1)/2<p< n b(b-1)/2.$$ Then we have 
 \begin{equation*}
\dot{F}_{\infty}^{\frac{n}{p}-\frac{2}{b-1}} \mathcal{M}^{q, \frac{n q }{p}} \hookrightarrow  \dot{B}_{\infty}^{\frac{n}{p}-\frac{2}{b-1}} \mathcal{M}^{q, \frac{n q }{p}}  \hookrightarrow Z_{b, b'}(\R^n).
\end{equation*}
\end{corollary}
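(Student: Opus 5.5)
The first embedding is immediate from the definitions, since the $\ell^\infty$ norm inside the Morrey quasi-norm is dominated pointwise by each dyadic piece: $\sup_j \|2^{js}\Dyadic f\|_X \le \|\sup_j 2^{js}|\Dyadic f|\|_X$ with $X=\mathcal{M}^{q,nq/p}$. For the second embedding, with $s=\frac np-\frac{2}{b-1}<0$, I plan to reduce to Theorem \ref{BbFbZ} by way of a Sobolev-type embedding of Besov--Morrey spaces in the Morrey scale. The goal is to land in $\dot{B}^{-\frac2b}_{b}\mathcal{M}^{b(1+\epsilon),\frac{2(1+\epsilon)}{b-1}}$ for some small $\epsilon>0$.

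\emph{Step 1 (Bernstein in Morrey spaces).} For $g$ with $\mathrm{supp}\,\mathcal{F}g\subset B_{2^{j+1}}$, I will use $\|g\|_{\mathcal{M}^{\tilde q,\tilde\lambda}}\lesssim 2^{j(\lambda/q-\tilde\lambda/\tilde q)}\|g\|_{\mathcal{M}^{q,\lambda}}$ whenever $\tilde q\ge q$ and $\frac{n-\tilde\lambda}{\tilde q}\ge \frac{n-\lambda}{q}$ hold appropriately. This is proved as in the proof of Lemma \ref{lem:embedding-endpoint-besov}, via $g=K_j*g$ and the $L^\infty$ bound $\|g\|_{L^\infty}\lesssim 2^{j\lambda/q}\|g\|_{\mathcal{M}^{q,\lambda}}$ obtained there, followed by interpolating $|g|^{\tilde q}\le \|g\|_\infty^{\tilde q-q}|g|^q$.

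\emph{Step 2 (choice of indices).} I take $\tilde q=b(1+\epsilon)$ and $\tilde\lambda=\frac{2(1+\epsilon)}{b-1}$, so $\tilde\lambda/\tilde q=\frac{2}{b(b-1)}$, while $\lambda/q=n/p$. The frequency factor is then $2^{j(n/p-2/(b(b-1)))}$. Combined with $2^{js}$ this gives $2^{j(\frac np -\frac{2}{b-1})}\|\Dyadic f\|_{\mathcal{M}^{q,nq/p}}\cdot 2^{-j(\frac np-\frac{2}{b(b-1)})}$, and since $\frac{2}{b-1}-\frac{2}{b(b-1)}=\frac{2}{b}$ the net exponent is exactly $2^{-\frac{2}{b}j}$, as required. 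The conditions needed are $\tilde q\ge q$, which forces $q\le b(1+\epsilon)$, and the Morrey-scaling compatibility. The hypotheses $b\le q\le p$ and $n(b-1)/2<p<nb(b-1)/2$ are used to ensure $\tilde q\ge q$ when $q=b$; for $q>b$ I would instead choose $\epsilon$ with $b(1+\epsilon)=q$. The upper bound $p<nb(b-1)/2$ makes $\frac np>\frac{2}{b(b-1)}$, and the constraint $\epsilon< n(b-1)/2-1$ from Theorem \ref{BbFbZ} must also be checked.

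\emph{Step 3 (the main obstacle).} The resulting bound is only an $\ell^\infty$ estimate in $j$, whereas Theorem \ref{BbFbZ} requires $\ell^b$ summation. I will try to gain summability by splitting the Bernstein step into two Morrey targets, as in Jawerth--Franke-type embeddings $\dot B^{s_0}_{\infty}\mathcal M\hookrightarrow \dot B^{s_1}_{b}\widetilde{\mathcal M}$. If that fails, the fallback is to bypass Theorem \ref{BbFbZ}. I would instead verify the parabolic Fefferman--Phong condition \eqref{Claim-FP} directly, i.e.\ $f\in T_{b,\epsilon}$ with $\epsilon>0$, and then apply Theorem \ref{KT-FP}. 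To do this I would bound $\int_0^{r^2}\!\int_{B_r}|e^{t\Delta}f|^{b(1+\epsilon)}$ using $e^{t\Delta}f=\sum_j e^{t\Delta}\Dyadic f$ and Step 1. The terms with $2^{2j}t\gtrsim1$ are summed using the exponential decay $e^{-c2^{2j}t}$, and the terms with $2^{2j}t\lesssim1$ using the geometric factor $2^{j(\frac np-\frac{2}{b(b-1)})}$ with $t^{1/2}$-scaling. This yields $\|e^{t\Delta}f\|_{\mathcal{M}^{\tilde q,\tilde\lambda}}\lesssim t^{-1/b}\|f\|$, and integrating in $t$ over $(0,r^2)$ on $B_r$ against the Morrey scaling gives \eqref{Claim-FP}. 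The $t$-integral converges precisely because $\epsilon>0$ is paired with the strict inequality $p>n(b-1)/2$, and making this work is the delicate point.
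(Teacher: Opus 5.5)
Your first embedding is fine. Your fallback has the right architecture, and it is the paper's: verify \eqref{Claim-FP} for some $\epsilon>0$ and apply Theorem~\ref{KT-FP}. But neither route closes as written.

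\textbf{The primary route through Theorem~\ref{BbFbZ} is blocked.} The embedding $\dot B^{s_0}_{\infty}\mathcal{M}\hookrightarrow\dot B^{s_1}_{b}\widetilde{\mathcal{M}}$ you hope for is false here. Take $f=|x|^{-2/(b-1)}$, which is tempered since $2/(b-1)<n$. Then $\Dyadic f=2^{2j/(b-1)}\Psi(2^j\cdot)$ with $0\neq\Psi\in\mathcal{S}(\R^n)$. By dilation, both $2^{j(\frac np-\frac{2}{b-1})}\|\Dyadic f\|_{\mathcal{M}^{b,nb/p}}$ and $2^{-\frac{2j}{b}}\|\Dyadic f\|_{\mathcal{M}^{b(1+\epsilon),\frac{2(1+\epsilon)}{b-1}}}$ are independent of $j$. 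So $f$ lies in the Kozono--Yamazaki space but not in $\dot B^{-\frac2b}_{b}\mathcal{M}^{b(1+\epsilon),\frac{2(1+\epsilon)}{b-1}}$. No refinement of the Bernstein step can produce $\ell^b$ summability in the Besov scale. The $\dot F$ target is not excluded by this example, but it would need a Morrey-type Franke--Jawerth embedding that you do not have.

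\textbf{The fallback fails because it keeps the critical target of Step~2.} That target is $\mathcal{M}^{\tilde q,\tilde\lambda}$ with $\tilde q=b(1+\epsilon)$ and $\tilde\lambda=\frac{2(1+\epsilon)}{b-1}$. By scaling, $t^{-1/b}$ is the only possible decay rate in that space, and it gives
\[
\int_0^{r^2}\!\!\int_{B_r(x)}|e^{t\Delta}f|^{b(1+\epsilon)}\,dy\,dt\lesssim r^{n-\tilde\lambda}\int_0^{r^2}t^{-1-\epsilon}\,dt=+\infty .
\]

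\textbf{The fix is to leave the spatial Morrey index alone,} which is what the paper does. Reduce to $q=b$ by H\"older, set $N_f=\|f\|_{\dot B^{s}_{\infty}\mathcal{M}^{b,nb/p}}$, and estimate $e^{t\Delta}f$ in $\mathcal{M}^{b(1+\epsilon),nb/p}$.

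Your Step~1 with $\tilde\lambda=\lambda=nb/p$ (where plain interpolation already suffices) gives $\|\Dyadic f\|_{\mathcal{M}^{b(1+\epsilon),nb/p}}\lesssim 2^{j\sigma}N_f$, with $\sigma=\frac{2}{b-1}-\frac{n}{p(1+\epsilon)}$. The condition $\sigma>0$ is exactly where $p>\frac{n(b-1)}{2}$ enters, to sum the low frequencies. Hence $\|e^{t\Delta}f\|_{\mathcal{M}^{b(1+\epsilon),nb/p}}\lesssim t^{-\sigma/2}N_f$, and
\[
\int_0^{r^2}\!\!\int_{B_r(x)}|e^{t\Delta}f|^{b(1+\epsilon)}\,dy\,dt\lesssim N_f^{b(1+\epsilon)}\,r^{n-\frac{nb}{p}}\int_0^{r^2}t^{-\frac{b(1+\epsilon)}{b-1}+\frac{nb}{2p}}\,dt\lesssim N_f^{b(1+\epsilon)}\,r^{n+2-\frac{2b(1+\epsilon)}{b-1}} .
\]
The time integral converges for small $\epsilon>0$ because $p<\frac{nb(b-1)}{2}$, not because of $p>\frac{n(b-1)}{2}$ as you assert. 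Taking $\epsilon>0$ works against convergence, not for it.

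The paper obtains the same decay from the heat-semigroup characterization of negative-order Besov--Morrey spaces, plus Morrey smoothing of $e^{t\Delta}$. Your Littlewood--Paley/Bernstein derivation would be a self-contained substitute once the target space is corrected.
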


\begin{proof}
The first embedding is obvious. For the second embedding, note that for any $b<q\leq p$, we have
$$\mathcal{M}^{q, \frac{nq}{p}}\hookrightarrow \mathcal{M}^{b, \frac{nb}{p}}$$
by H\"older's inequality. Thus, it suffices to 
consider only the endpoint case $q=b$. 
 We start  with the smoothing property of the heat semigroup on Morrey spaces that can be found in \cite[Proposition 4.1]{Sou}: For any $1\leq p_1\leq p_2<\infty$, and $\lambda\in (0, n]$, we have 
 $$\|e^{t\Delta} f\|_{\mathcal{M}^{p_2, \lambda}} \lesssim t^{-\frac{\lambda}{2} (\frac{1}{p_1}-\frac{1}{p_2})}  \|e^{t\Delta} f\|_{\mathcal{M}^{p_1, \lambda}}, \qquad t>0.$$

Thus, for $b\leq p< n b(b-1)/2$ and $f\in \dot{B}_{\infty}^{\frac{n}{p}-\frac{2}{b-1}}\mathcal{M}^{b, \frac{n b}{p}}$,  if we denote 
$$N_f= \|f \|_{\dot{B}_{\infty}^{\frac{n}{p}-\frac{2}{b-1}}\mathcal{M}^{b, \frac{n b}{p}}},$$
then 
\begin{align*}
 \|e^{t\Delta} f\|_{\mathcal{M}^{b(1+\epsilon), \frac{nb}{p}}} & \lesssim t^{-\frac{n b}{2p} (\frac{1}{b}-\frac{1}{b(1+\epsilon)})}  \|e^{t\Delta} f\|_{\mathcal{M}^{b, \frac{n b}{p}}}\\
 & \lesssim   N_f \,  t^{-\frac{n b}{2p} (\frac{1}{b}-\frac{1}{b(1+\epsilon)})}  t^{-\frac{1}{b-1} + \frac{n }{2p}}\\
 & \lesssim  N_f  \, t^{-\frac{1}{b-1} + \frac{n }{2p(1+\epsilon)}},
 \end{align*}
 where in the second inequality  we used a characterization of Besov-Morrey space of  order  $s=n/p-2/(b-1)<0$ by the heat semiproup obtained in \cite[Proposition 2.22 and Equ. (2.57)]{Mazz}. 
 It follows that for any $x\in \R^n$ and $r>0$,
 \begin{align*}
 \int_{0}^{r^2} \int_{B_r(x)} |e^{t\Delta} f|^{b(1+\epsilon)} dy dt\lesssim \left(N_f\right)^{b(1+\epsilon)} r^{n-\frac{nb}{p}} \int_0^{r^2} t^{-\frac{b(1+\epsilon)}{b-1}+\frac{nb}{2p}} dt.
 \end{align*}

Note that, since  $p< n b(b-1)/2$, we have 
 $$-\frac{b(1+\epsilon)}{b-1}+\frac{nb}{2p}> -1,$$
 provided we choose 
 $$0<\epsilon < \left(1+\frac{nb}{2p} \right) \frac{b-1}{b} -1.$$
 
 Thus, for this choice of $\epsilon$,
 \begin{align*}
 \int_{0}^{r^2} \int_{B_r(x)} |e^{t\Delta} f|^{b(1+\epsilon)} dy ds &\lesssim\left(N_f\right)^{b(1+\epsilon)} r^{n-\frac{nb}{p}}  r^{-\frac{2b(1+\epsilon)}{b-1}+\frac{nb}{p} +2} \\
 &\lesssim \left(N_f\right)^{b(1+\epsilon)} r^{n+2-\frac{2b(1+\epsilon)}{b-1}},
 \end{align*}
 which by Theorem  \ref{KT-FP} yields the desired embedding.
\end{proof}

\begin{lemma}\label{ZtoBes} Let $b>1+ \frac 2 n$. We have $Z_{b, b'}(\R^n) \hookrightarrow \dot{B}^{-\frac{2}{b-1}}_{\infty, \infty}(\R^n)\cap \mathcal{S}'_0(\R^n)$ with the estimate
\begin{equation}\label{temBes}
\|e^{t\Delta} f \|_{L^\infty} \lesssim  t^{-\frac{1}{b-1}} \|f\|_{T_{b, 0}(\R^n)} \lesssim  t^{-\frac{1}{b-1}} \|f\|_{Z_{b, b'}(\R^n)}, \quad \forall \,t>0.
\end{equation}

In fact, for any $R>0$, we have 
$$\sup_{0<t<R} t^{\frac{1}{b-1}} \|e^{t\Delta} f \|_{L^\infty} \lesssim \|f\|_{T^{\sqrt{nR}}_{b, 0}(\R^n)},$$
where $T^{M}_{b,0}(\R^n), M>0,$ is a time truncated version of $T_{b,0}(\R^n)$  defined by
$$\|f\|_{T^{M}_{b,0}(\R^n)} := \left(\sup_{0<r<M,  \, x\in\R^n} r^{n+2-2b'} \int_{0}^{r^2} \int_{B_{r}(x)} |e^{s\Delta} f|^b dy ds  \right)^{\frac{1}{b}}.$$
\end{lemma}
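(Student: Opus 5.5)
The plan is to prove the pointwise bound
\[
\sup_{0<t<R} t^{\frac{1}{b-1}} \|e^{t\Delta} f\|_{L^\infty} \lesssim \|f\|_{T^{\sqrt{nR}}_{b,0}}
\]
using only the semigroup property and H\"older's inequality. Then \eqref{temBes} follows by letting $R\to\infty$. The comparison $\|f\|_{T_{b,0}}\lesssim \|f\|_{Z_{b,b'}}$ was already noted in the proof of Theorem \ref{KT-FP}: one takes $E=\overline{B_r(x)}\times[\delta,r^2]$, uses ${\rm cap}_{2,1,b'}(E)\lesssim r^{n+2-2b'}$ (by scaling, or via Theorem \ref{thm:cap-equiv} and the parabolic Riesz capacity of a cylinder), and lets $\delta\to0$.

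For the pointwise bound, fix $t\in(0,R)$ and $x\in\R^n$. For every $s\in(t/2,t)$ we have $e^{t\Delta}f=e^{(t-s)\Delta}e^{s\Delta}f$, so
\[
|e^{t\Delta}f(x)|\le \int_{\R^n}\Heat(x-y,t-s)\,|e^{s\Delta}f(y)|\,dy .
\]
Averaging over $s\in(t/2,t)$ gives the same bound with an average in $s$. The kernel $\Heat(\cdot,t-s)$ is singular as $s\to t$, so I would instead average over $s\in(t/4,t/2)$. There $t-s\simeq t$, hence $\Heat(x-y,t-s)\lesssim t^{-n/2}e^{-|x-y|^2/(Ct)}$.

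Next, decompose $\R^n$ into the ball $B_{\sqrt t}(x)$ and the annuli $B_{2^{k+1}\sqrt t}(x)\setminus B_{2^k\sqrt t}(x)$, and apply H\"older on each piece:
\[
|e^{t\Delta}f(x)|\lesssim \sum_{k\ge0} e^{-c4^k}\, t^{-\frac n2-1}\iint_{B_{2^{k}\sqrt t}(x)\times(t/4,t/2)}|e^{s\Delta}f|\,dy\,ds .
\]
Each integral is at most $(2^k\sqrt t)^{(n+2)/b'}\bigl(\iint|e^{s\Delta}f|^b\bigr)^{1/b}$.

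For the small annuli, with $r=2^k\sqrt t$ satisfying $r^2\ge t/2$, the definition of $T_{b,0}$ bounds the $L^b$ integral by $\|f\|^b r^{2b'-n-2}$. Combining powers gives $t^{-\frac n2-1}r^{\frac{n+2}{b'}}r^{\frac{2b'-n-2}{b}}\|f\|$. The exponent of $r$ is $(n+2)(1-\frac1b)+\frac{2}{b-1}-\frac{n+2}{b}$. Since the $T$-norm only controls $r^2$-time windows, I should be careful and instead cover the big ball by $\sim 2^{kn}$ balls of radius $\sqrt t$. Each of these costs $\|f\|\,(\sqrt t)^{\frac{n+2}{b'}+\frac{2b'-n-2}{b}}$, and with $\frac{2b'}{b}=\frac{2}{b-1}$ the exponent of $\sqrt t$ becomes $n+2-\frac{2}{b-1}$. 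Multiplying by $t^{-\frac n2-1}$ leaves $t^{-\frac1{b-1}}$, and the sum $\sum_k 2^{kn}e^{-c4^k}$ converges. All radii used equal $\sqrt t<\sqrt R\le\sqrt{nR}$, so only the truncated norm enters.

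Finally, I would handle membership in $\dot B^{-\frac2{b-1}}_{\infty,\infty}\cap\mathcal S'_0$. The heat-semigroup characterization of negative-order Besov spaces (Theorem \ref{temChar}/\ref{Bui} with $q=\infty$, $p=\infty$ formally, or the standard Littlewood--Paley argument writing $\Dyadic f=\Dyadic e^{-4^{-j}\Delta}e^{4^{-j}\Delta}f$ with a bounded multiplier) turns \eqref{temBes} into the Besov bound. For vanishing at infinity, \eqref{temBes} with $t\to\infty$ gives $e^{t\Delta}f\to0$ uniformly. Pairing with $\lambda^n\varphi(\lambda\cdot)$ and writing $f=e^{t\Delta}f+(f-e^{t\Delta}f)$ with $t=\lambda^{-2}$, the remainder is controlled by the Besov estimate. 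I expect this last step, the rigorous identification with $\mathcal S'_0$ (that $f$ itself, not just a class modulo polynomials, vanishes at infinity), to be the main obstacle. I would try to argue that $e^{t\Delta}f\to f$ in $\mathcal S'$ as $t\to0$, and that $\langle f,\lambda^n\varphi(\lambda\cdot)\rangle=\langle e^{\lambda^{-2}\Delta}f,\cdot\rangle$ plus a term handled by the $\dot B^{-2/(b-1)}_{\infty,\infty}$ bound via Proposition \ref{BoudReal}.
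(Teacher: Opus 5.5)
Your proof of the pointwise bound is correct and is essentially the paper's. Both use the semigroup identity, a time average over a window where $t-s\simeq t$, a Gaussian bound on $\Heat$, a splitting of $\R^n$ at scale $\sqrt t$, and H\"older plus the $T$-norm on each piece. The paper applies Jensen once against the normalized kernel on $(0,t/2)$ and then splits into cubes $\sqrt t(k+[0,1]^n)$; you split first and apply H\"older piecewise, which is equivalent. Two small corrections:
\begin{itemize}
\item The $T_{b,0}$ norm gives $\int_0^{r^2}\int_{B_r}|e^{s\Delta}f|^b\le\|f\|^b r^{n+2-2b'}$, not $r^{2b'-n-2}$. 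Your final exponent $n+2-\frac{2}{b-1}$ is correct only with this sign. The displayed definition of $T^M_{b,0}$ carries the same flip; the scale-invariant normalization is $r^{2b'-n-2}$.
\item Theorems~\ref{Bui} and \ref{temChar} require $p<\infty$, so the Besov bound must come from your Littlewood--Paley alternative. What matters there is not that the multiplier is bounded, which is not enough on $L^\infty$. It is that $\varphi(\xi)e^{c|\xi|^2}\in C_c^\infty$, so its inverse Fourier transform lies in $L^1$ with scale-invariant norm. This is exactly the paper's computation.
\end{itemize}

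The genuine gap is the step $f\in\mathcal{S}'_0(\R^n)$. You leave it open, and the sketch you give would not close it. The Besov bound only sees $f$ modulo polynomials. By Proposition~\ref{BoudReal}, $f=g+P$ with $g=\sum_j\Dyadic f\in\mathcal{S}'_0(\R^n)$ and $P$ a polynomial, and the whole point is to show $P=0$. Consider your split $\langle f,\lambda^n\varphi(\lambda\cdot)\rangle=\langle e^{t\Delta}f,\lambda^n\varphi(\lambda\cdot)\rangle+\langle f,(I-e^{t\Delta})[\lambda^n\varphi(\lambda\cdot)]\rangle$ with $t=\lambda^{-2}$. The second test function has Fourier transform $(1-e^{-4\pi^2|\eta|^2})\mathcal{F}\varphi(\eta)$, $\eta=\xi/\lambda$, which vanishes only to second order at $\eta=0$. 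So it annihilates only polynomials of degree at most one. The Besov bound gives no control of $\langle P,(I-e^{t\Delta})[\lambda^n\varphi(\lambda\cdot)]\rangle$; for $P=x_1^2$ this equals $-2\lambda^{-2}\int\varphi$.

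The missing idea is to use the decay of $e^{t\Delta}f$ for the actual distribution $f$ at low frequencies. The paper writes $\mathcal{F}^{-1}(\upchi(t\cdot)\mathcal{F}f)$ as the convolution of $e^{ct^2\Delta}f$ with an $L^1$ kernel of $t$-independent norm, since $\upchi(\xi)e^{c|\xi|^2}\in C_c^\infty$. Hence it tends to $0$ in $L^\infty$ as $t\to\infty$. By the criterion recalled in Section~\ref{SecFunctSpace}, this gives $f=\sum_j\Dyadic f$ in $\mathcal{S}'(\R^n)$, and Proposition~\ref{BoudReal} then yields $f\in\mathcal{S}'_0(\R^n)$.

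Alternatively, with $f=g+P$ as above, $e^{t\Delta}g$ satisfies the same $t^{-\frac{1}{b-1}}$ bound (sum $e^{-ct4^j}2^{\frac{2j}{b-1}}$ over $j$). So the polynomial $e^{t\Delta}P$ is bounded, with sup norm tending to $0$ as $t\to\infty$. Its top-degree part equals that of $P$, which forces $P=0$. Iterating your split $k$ times with $2k>\deg P$ would also work, but again only after you have written $f=g+P$.
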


\begin{proof}
Let $(x,t)\in \R^{n+1}_+$ be fixed. For any $\tau\in(0, t/2)$, we have $e^{t\Delta} f(x)=e^{(t-\tau)\Delta}\left[e^{\tau \Delta}f\right](x)$ and thus, by H\"older's inequality,
\begin{align*}
|e^{t\Delta} f(x)|
&=\left |\frac{2}{t}\int_0^{\frac{t}{2}} \int_{\R^n} \frac{e^{-\frac{|x-y|^2}{4(t-\tau)}}}{(4\pi (t-\tau))^{n/2}}      e^{\tau \Delta} f(y) dy d\tau \right|\\
 &\leq \left\{\frac{2}{t}\int_0^{\frac{t}{2}} \int_{\R^n} \frac{e^{-\frac{|x-y|^2}{4(t-\tau)}}}{(4\pi (t-\tau))^{n/2}}      |e^{\tau \Delta} f(y)|^{b} dy d\tau \right\}^{\frac{1}{b}}.
\end{align*}
Observe that if $\tau \in(0, t/2)$ and $\frac{x-y}{\sqrt{t}}\in k+[0,1]^n$ for some $k\in \mathbb{Z}^n$, then
\begin{equation}\label{Rntok}
\frac{e^{-\frac{|x-y|^2}{4(t-\tau)}}}{(4\pi (t-\tau))^{n/2}}    \leq \frac{e^{-\frac{|x-y|^2}{4t}}}{(4\pi (t-\tau))^{n/2}} \leq      \frac{1}{t^{n/2}} \frac{C(n, N)}{(1+ |k|)^{N}}, \qquad \forall N>n.
\end{equation}
Thus, letting $N=n+1$, we have 
\begin{align*}
\|e^{t\Delta} f\|_{L^\infty}
&\lesssim  \left \{ \sum_{k\in\mathbb{Z}^n}  \frac{t^{-\frac{n+2}{2}}}{(1+ |k|)^{n+1}}\int_0^{\frac{t}{2}} \int_{\frac{x-y}{\sqrt{t}} \in k +[0,1]^n } | e^{\tau \Delta} f(y)|^b dy d\tau \right\}^{\frac{1}{b}}\\
&\lesssim  \left \{ \sum_{k\in\mathbb{Z}^n}  \frac{t^{-\frac{n+2}{2}}}{(1+ |k|)^{n+1}} t^{(n+2- 2b')/2} \right\}^{\frac{1}{b}} \|f\|_{T^{\sqrt{nR}}_{b, 0}(\R^n)}\\
&\lesssim \, t^{-\frac{1}{b-1}} \|f\|_{T_{b, 0}(\R^n)}\\
&\lesssim \, t^{-\frac{1}{b-1}} \|f\|_{Z_{b, b'}(\R^n)}.
\end{align*}

In order to prove that  $f\in \mathcal{S}_0'(\R^n)$, we write, for any $\phi\in C_c^\infty(\R^n)$ and $t>0$,
\begin{align*}
\mathcal{F}^{-1}(\phi(t\cdot)\mathcal{F} f)&=\mathcal{F}^{-1}(\phi(t\cdot) e^{t^2|\cdot|^2} e^{-t^2|\cdot|^2} \mathcal{F} f)\\
&=\mathcal{F}^{-1}(\phi(t\cdot) e^{t^2|\cdot|^2})* \mathcal{F}^{-1}(e^{-t^2|\cdot|^2} \mathcal{F} f)\\
&=t^{-n}\mathcal{F}^{-1}(\phi(\cdot) e^{|\cdot|^2})(\cdot/t)* \mathcal{F}^{-1}(e^{-t^2|\cdot|^2} \mathcal{F} f).
\end{align*}
Thus, it follows from \eqref{temBes} that 
\begin{align*}
\|\mathcal{F}^{-1}(\phi(t\cdot)\mathcal{F} f)\|_{L^\infty}&=\|\mathcal{F}^{-1}(\phi(\cdot) e^{|\cdot|^2})\|_{L^1} \| \mathcal{F}^{-1}(e^{-t^2|\cdot|^2} \mathcal{F} f)\|_{L^\infty}\\
&=C(\phi) \|e^{t^2 \Delta} f\|_{L^\infty}\\
&\leq C(\phi) C \,  t^{-\frac{2}{b-1}} \|f\|_{Z_{b, b'}(\R^n)}\\
&\rightarrow 0,
\end{align*}
as $t\rightarrow\infty$. This implies that $f=\sum_{j\in\mathbb{Z}} \Dyadic f$ in $\mathcal{S}'(\R^n)$ and hence belongs to  $\mathcal{S}_0'(\R^n)$
 by Proposition \ref{BoudReal}.
\end{proof}

We now define a space 
$$X=\left\{ u\in L^b_{\rm loc}(\R^{n+1}_{+}): \| u\|_{X}<+\infty) \right\},$$
where 
$$\| u\|_{X}:= \sup_{t>0}t^{\frac{1}{b-1}} \|u(\cdot, t)\|_{L^\infty} + \|u\|_{Y_{b, b'}}.$$
The space $X$ will be our space of solutions to the Cauchy problem \eqref{ABC} with initial data $u_0\in Z_{b,b'}(\R^n)$.

\begin{lemma} \label{Tbound}
Let $b>1+\frac{2}{n}$ and define, for $u\in X$ and $(x,t)\in \R^{n+1}_+$, 
\begin{align*}
T(u)(x,t)&:=\mathsf{H}*(|u|^{b-1}u \,\upchi_{(0,\infty)}(\cdot))(x,t)\\
&= \int_{0}^t   e^{(t-\tau)\Delta}[|u(\tau)|^{b-1}u(\tau)] d\tau.    
\end{align*}
Then we have 
\begin{equation*}
\|T(u)\|_{X} \lesssim \|u\|^b_{X}, \qquad \forall  u\in X.
\end{equation*}
This result also holds if $|u|^{b-1} u$ is replaced by $|u|^{b}$ in the definition of $T$. Moreover, for any $R>0$,
 $$ \sup_{0<t<R} t^{\frac{1}{b-1}}\|T(u)(\cdot, t)\|_{L^\infty(\R^n)}\lesssim \left( \sup_{0<s<R}  s^{\frac{1}{b-1}}\|u(\cdot, s)\|_{L^\infty(\R^n)}^b +  \|u\|_{\widetilde{Y}_b^{\sqrt{nR}}} \right)^b,$$
 where we define
$$\|u\|_{\widetilde{Y}_b^{M}}:= \left(\sup_{0<r<M, \, x\in\R^n} r^{n+2-2b'}  \int_{0}^{r^2} \int_{B_r(x)} |u(y,s)|^b dy ds\right)^{\frac{1}{b}}, \quad M>0.$$

\end{lemma}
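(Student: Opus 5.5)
The estimate splits into two parts: the $Y_{b,b'}$ part of $\|T(u)\|_X$, and the weighted $L^\infty$ part $\sup_t t^{\frac{1}{b-1}}\|T(u)(\cdot,t)\|_{L^\infty}$.

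\emph{The $Y_{b,b'}$ part.} I would control this by a trace/capacity argument based on Proposition \ref{prop:equiv} with $\beta=2$ and $p=b'$, together with Theorem \ref{thm:cap-equiv}. Set $\nu:=|u|^b\upchi_{(0,\infty)}(t)$. Since $u\in Y_{b,b'}$ and $\mathrm{cap}_{2,1,b'}\simeq \PRcap{2}{b'}$ on compact sets, we have
\[
\nu(E)\lesssim \|u\|_{Y_{b,b'}}^b\,\PRcap{2}{b'}(E)
\]
for every compact $E$. The compact sets touching $\{t\le 0\}$ are harmless, because $\nu$ vanishes there and capacity is monotone. By the equivalence of the second and third conditions in Proposition \ref{prop:equiv},
\[
\int_E \PR{2}[\nu]^{b}\,dx\,dt\lesssim \|u\|_{Y_{b,b'}}^{b\cdot b}\,\PRcap{2}{b'}(E),
\]
where $p'=b$ and the constants satisfy $C_3\simeq C_2^{b}$. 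Pointwise $|T(u)|\le \Heat*\nu\lesssim \PR{2}[\nu]$, since $\Heat(x,t)\lesssim(|t|+|x|^2)^{-n/2}$. Hence
\[
\iint_E|T(u)|^b\lesssim \|u\|_{Y_{b,b'}}^{b^2}\,\mathrm{cap}_{2,1,b'}(E),
\]
that is, $\|T(u)\|_{Y_{b,b'}}\lesssim\|u\|_{Y_{b,b'}}^b$. The requirement $1<b'<(n+2)/2$ is exactly $b>1+2/n$. This step uses only $|u|^b$, so it also covers the variant with $|u|^b$ in place of $|u|^{b-1}u$.

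\emph{The $L^\infty$ part.} Fix $(x,t)$ and split the time integral at $t/2$.

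For $\tau\in(t/2,t)$, use $\|e^{(t-\tau)\Delta}g\|_\infty\le\|g\|_\infty$ and $\|u(\tau)\|_\infty^b\le \tau^{-\frac{b}{b-1}}\|u\|_X^b$. This piece is then at most $t\cdot t^{-\frac{b}{b-1}}\|u\|_X^b=t^{-\frac1{b-1}}\|u\|_X^b$.

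For $\tau\in(0,t/2)$ the heat kernel satisfies $t-\tau\simeq t$. I would bound it as in \eqref{Rntok} by $t^{-n/2}\sum_k(1+|k|)^{-n-1}\upchi_{\{(x-y)/\sqrt t\in k+[0,1]^n\}}$. Each resulting cylinder, of side $\sqrt t$ in space and $t/2$ in time, is covered by boundedly many parabolic cylinders $B_{r}\times(0,r^2)$ with $r\simeq\sqrt t$. On these I use the Morrey bound
\[
\int_0^{r^2}\!\!\int_{B_r}|u|^b\lesssim \|u\|_{Y_{b,b'}}^b\, r^{n+2-2b'},
\]
which comes from $\mathrm{cap}_{2,1,b'}(Q_r)\simeq r^{n+2-2b'}$. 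Summing over $k$ gives
\[
t^{-n/2}\,t^{(n+2-2b')/2}\|u\|_X^b=t^{1-b'}\|u\|_X^b.
\]
Since $1-b'=-\frac{1}{b-1}$, this piece is also $\lesssim t^{-\frac1{b-1}}\|u\|_X^b$.

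\emph{The truncated version.} The localized statement for $0<t<R$ follows from the same computation. Only the radii $r\lesssim\sqrt{nR}$ are used, so the full $Y_{b,b'}$ norm can be replaced by $\|u\|_{\widetilde Y_b^{\sqrt{nR}}}$, and the $L^\infty$ factor is only needed on $(0,R)$.

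The main obstacle is the $Y_{b,b'}$ step. It needs the full strength of the equivalence in Proposition \ref{prop:equiv}, transferred to the parabolic capacity through Theorem \ref{thm:cap-equiv}, and it requires checking that restricting to compact subsets of $\R^{n+1}_+$ rather than of $\R^{n+1}$ loses nothing.
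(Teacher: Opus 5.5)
Your proposal is correct and follows the paper's proof essentially step for step. The $Y_{b,b'}$ bound comes from the pointwise estimate $|T(u)|\lesssim \mathbb{I}_2[|u|^b\upchi_{(0,\infty)}]$ combined with Proposition \ref{prop:equiv} and Theorem \ref{thm:cap-equiv}. The weighted $L^\infty$ bound comes from splitting the time integral at $t/2$: on $(0,t/2)$ you use \eqref{Rntok} together with the Morrey-type bound from $\mathrm{cap}_{2,1,b'}(Q_r)\simeq r^{n+2-2b'}$, and on $(t/2,t)$ the $L^\infty$ contraction.

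Your reading of the truncated estimate is the intended one: $\widetilde Y_b^M$ normalized by $r^{2b'-n-2}$, and the $b$-th power applied to $\sup_{0<s<R} s^{\frac{1}{b-1}}\|u(\cdot,s)\|_{L^\infty}+\|u\|_{\widetilde Y_b^{\sqrt{nR}}}$. The paper's displayed definition of $\widetilde Y_b^M$ and its ``Moreover'' bound contain exponent typos.
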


\begin{proof} We write, for $(x,t)\in \R^{n+1}_+$,
\begin{align*}
T(u)(x,t)&= \int_{0}^{\frac t 2 } \int_{\R^n} \frac{e^{-\frac{|x-y|^2}{4(t-\tau)}}}{(4\pi(t-\tau))^{\frac{n}{2}}}|u(y,\tau)|^{b-1} u(y,\tau) dy d\tau\\
&\quad + \int_{\frac t 2}^t \int_{\R^n} \frac{e^{-\frac{|x-y|^2}{4(t-\tau)}}}{(4\pi(t-\tau))^{\frac{n}{2}}}|u(y,\tau)|^{b-1} u(y,\tau) dy d\tau\\
&=: T_1(u)(x,t) + T_2(u)(x,t).
\end{align*}
By \eqref{Rntok},  we have 
\begin{equation}
\begin{aligned}\label{T1}
|T_1(u)(x,t)| &\lesssim   \sum_{k\in\mathbb{Z}^n}  \frac{t^{-\frac{n}{2}}}{(1+ |k|)^{n+1}}\int_0^{\frac{t}{2}} \int_{\frac{x-y}{\sqrt{t}} \in k +[0,1]^n } |u(y, \tau)|^b dy d\tau \\
&\lesssim   \sum_{k\in\mathbb{Z}^n}  \frac{t^{-\frac{n}{2}}}{(1+ |k|)^{n+1}} t^{(n+2- 2b')/2}   \\
&\lesssim \, t^{-\frac{1}{b-1}} \|u\|^b_{\widetilde{Y}_b^{\sqrt{nt}}}. 
\end{aligned}
\end{equation}

On the other hand, as the heat semigroup is a contraction on $L^p$ for $1\leq p\leq \infty$,
we find
\begin{align}\label{T2}
|T_2(u)(x,t)| &\lesssim  \int_{\frac t 2}^t \|u(\cdot, \tau)\|_{L^\infty}^b d\tau \\
&\lesssim   \int_{\frac t 2}^t  \tau^{-\frac{b}{b-1}} d\tau \,  \sup_{s>0}  s^{\frac{b}{b-1}}\|u(\cdot, s)\|_{L^\infty}^b  \nonumber\\
&\lesssim \, t^{-\frac{1}{b-1}} \,  \sup_{0<s<t}  s^{\frac{b}{b-1}}\|u(\cdot, s)\|_{L^\infty}^b. \nonumber
\end{align}
Combining estimates \eqref{T1},  \eqref{T2}, and the fact that ${\rm cap}_{2,1, b'}(Q_r)=c\,  r^{n+2-2b'}$, $r>0,$ we obtain, for all $t>0$, 
\begin{equation*}
t^{\frac{1}{b-1}}\|T(u)(\cdot, t)\|_{L^\infty} \lesssim \|u\|^b_{\widetilde{Y}_b^{\sqrt{nt}}} + \sup_{0<s<t}  s^{\frac{b}{b-1}}\|u(\cdot, s)\|_{L^\infty}^b   \lesssim \,  \|u\|^b_{X}.
\end{equation*}

Finally, we  notice  that since  
$$|T(u)(x,t)|\lesssim \mathbb{I}_2[|u|^b\upchi_{(0,\infty)}(\cdot)](x,t), \qquad (x,t)\in \R^{n+1}_+,$$
it follows from Proposition \ref{prop:equiv} and Theorem \ref{thm:cap-equiv} that 
\begin{equation*}
\|T(u)\|_{Y_{b,b'}}\lesssim \|u\|_{Y_{b,b'}}^b \lesssim  \|u\|^b_{X}.
\end{equation*}
\end{proof}

A consequence of Lemma \ref{Tbound} is the following local Lipschitz estimate for the operator $T$.

\begin{lemma}\label{Condiff} Let $b> 1+\frac2 n$ and $u_1, u_2\in X$. Then we have 
\begin{equation*}
\|T(u_1) -T(u_2)\|_{X} \leq C  \| u_1-u_2\|_{X} (\| u_1\|_{X} + \| u_2\|_{X})^{b-1}.
\end{equation*}
\end{lemma}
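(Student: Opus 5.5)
The plan is to reduce the difference estimate to the same two mechanisms used in the proof of Lemma \ref{Tbound}: the pointwise kernel bounds leading to \eqref{T1}--\eqref{T2}, and the trace/capacity machinery of Proposition \ref{prop:equiv} together with Theorem \ref{thm:cap-equiv}. Write $g(s)=|s|^{b-1}s$ and use the elementary inequality
\begin{equation*}
|g(u_1)-g(u_2)|\leq b\,|u_1-u_2|\,(|u_1|+|u_2|)^{b-1}=:b\,w.
\end{equation*}
By linearity of the heat potential,
\begin{equation*}
|T(u_1)-T(u_2)|\leq b\,\mathsf{H}*(w\,\upchi_{(0,\infty)})\lesssim \mathbb{I}_2[w\upchi_{(0,\infty)}].
\end{equation*}
It therefore suffices to bound the two parts of the $X$-norm of $\mathsf{H}*(w\upchi_{(0,\infty)})$ by $\|u_1-u_2\|_X(\|u_1\|_X+\|u_2\|_X)^{b-1}$.

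For the $L^\infty$ weighted part, split the time integral at $t/2$ as in Lemma \ref{Tbound}. The piece on $(t/2,t)$ is handled pointwise. Since $\|w(\tau)\|_{L^\infty}\le \tau^{-b/(b-1)}\|u_1-u_2\|_X(\|u_1\|_X+\|u_2\|_X)^{b-1}$, integrating gives $t^{-1/(b-1)}$ times the desired product.

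For the piece on $(0,t/2)$, use \eqref{Rntok} to reduce to sums of integrals $\int_0^{t/2}\int_{\text{cube}} w$. Apply H\"older with exponents $b$ and $b'$, i.e. split $w=|u_1-u_2|\cdot(|u_1|+|u_2|)^{b-1}$. Each cube integral is bounded by
\begin{equation*}
\Big(\iint|u_1-u_2|^b\Big)^{1/b}\Big(\iint(|u_1|+|u_2|)^b\Big)^{(b-1)/b}.
\end{equation*}
Both factors are controlled by the capacity of the parabolic cylinder, ${\rm cap}_{2,1,b'}(Q_r)\simeq r^{n+2-2b'}$, through the $Y_{b,b'}$ norms. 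This gives the same power $t^{(n+2-2b')/2}$ as in \eqref{T1}, hence the bound $t^{-1/(b-1)}$ times the product.

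For the $Y_{b,b'}$ part, I need a bilinear version of the bound $\|\mathbb{I}_2[|u|^b]\|_{Y_{b,b'}}\lesssim\|u\|_{Y_{b,b'}}^b$. The idea is to show that $\nu=w\,dxdt$ satisfies the capacity condition $\nu(E)\lesssim A\,{\rm cap}_{2,1,b'}(E)$ with $A=\|u_1-u_2\|_{Y}\|u_1|+|u_2\|_{Y}^{b-1}$, after which Proposition \ref{prop:equiv} applies. H\"older on an arbitrary compact $E$ gives
\begin{equation*}
\nu(E)\le\Big(\int_E|u_1-u_2|^b\Big)^{1/b}\Big(\int_E(|u_1|+|u_2|)^b\Big)^{1/b'}\le \|u_1-u_2\|_{Y}\,\big\||u_1|+|u_2|\big\|_{Y}^{b-1}\,{\rm cap}_{2,1,b'}(E).
\end{equation*}
This is the measure condition (second bullet) of Proposition \ref{prop:equiv} with $\beta=2$, $p=b'$, and constant $C_2=A$. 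The proposition then converts it into the third bullet,
\begin{equation*}
\int_E\mathbb{I}_2[\nu]^{b}\lesssim A^{b'/(b'-1)}\,{\rm cap}(E)=A^{b}\,{\rm cap}(E),
\end{equation*}
using $C_3\simeq C_2^{p/(p-1)}$ and $b'/(b'-1)=b$. Combining this with $|T(u_1)-T(u_2)|\lesssim\mathbb{I}_2[\nu]$ and the capacity equivalences of Theorem \ref{thm:cap-equiv} gives $\|T(u_1)-T(u_2)\|_{Y_{b,b'}}\lesssim A$, since $(A^b)^{1/b}=A$.

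The main subtlety I expect is exactly this step. The difference estimate is not of the form $|u|^b$ for a single $u$, so one cannot quote Lemma \ref{Tbound} verbatim. The point is that Proposition \ref{prop:equiv} only needs a capacity-bounded measure, and H\"older on each compact set provides it with the correct product constant. Everything else repeats the proof of Lemma \ref{Tbound}.
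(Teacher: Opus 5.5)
Your proof is correct and is essentially the paper's argument. It uses the same pointwise inequality, the same $t/2$ splitting with the cylinder capacity $r^{n+2-2b'}$, and the same trace bound via Proposition \ref{prop:equiv}, with H\"older inserted on cubes and compact sets. The one difference is that the paper does quote Lemma \ref{Tbound} directly, in its $|u|^b$ variant, by applying it to $v=w^{1/b}$ to get $\|T(u_1)-T(u_2)\|_X\lesssim\|w^{1/b}\|_X^b$; it then applies H\"older to the two parts of the $X$-norm of $w^{1/b}$, which avoids rerunning that lemma's proof.
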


\begin{proof}  Using the inequality 
\begin{equation}\label{ABIn}
||A|^{b-1}A-|B|^{b-1}B|    \leq b|A-B|(|A|^{b-1}+|B|^{b-1}), \qquad \forall A, B \in \R,
\end{equation}
we have 
$$|T(u_1)-T(u_2)|\leq b\,  \mathsf{H}*[|u_1-u_2|(|u_1|^{b-1}  +|u_2|^{b-1})\upchi_{(0,\infty)(\cdot)}].$$

Then by Lemma \ref{Tbound}, we find 
\begin{align*}
\|T(u_1)-T(u_2)\|_{X} &\lesssim \| |u_1-u_2|^{\frac 1 b}(|u_1|^{b-1}  +|u_2|^{b-1})^\frac 1 b\|_{X}^b \\
& \lesssim \|u_1-u_2\|_{X} (\|u_1\|_{X} + \|u_2\|_{X})^{b-1}, 
\end{align*}
where we used H\"older's inequality in the last inequality.
\end{proof}

The following theorem provides our main existence result for signed initial data $u_0$ that belong to   $Z_{b, b'}(\R^n), b>1+\frac 2 n$.

 \begin{theorem}\label{thm:maintheorem}
Let $b>1+ \frac 2 n$ and $u_0\in \mathcal{S}'(\R^n)$. There exists $\varepsilon=\varepsilon(n,b)>0$ such that  if $ u_0\in Z_{{b, b'}}(\R^n)$ with
 $$\|u_0\|_{Z_{b, b'}(\R^n)} = \|e^{t\Delta} u_0\|_{Y_{b,b'}}\leq \varepsilon,$$ then there exists a unique small  global  solution $u\in X$ to the Cauchy problem \eqref{ABC},
 which satisfies the estimate
\begin{equation}\label{pwebyI}
|u(x,t)|\leq |e^{t\Delta}u_0(x)|+C\, \mathbb{I}_2(|e^{\tau \Delta}u_0|^b)(x,t)    
\end{equation}
for a.e. $(x,t)\in \R^n\times (0,\infty)$.
\end{theorem}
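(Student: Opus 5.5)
The plan is a contraction mapping argument in the space $X$, via the mild formulation $u = e^{t\Delta}u_0 + T(u)$, with $T$ as in Lemma \ref{Tbound}.

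\textbf{Step 1: the free solution lies in $X$.} By Lemma \ref{ZtoBes},
$$\sup_{t>0} t^{\frac{1}{b-1}}\|e^{t\Delta}u_0\|_{L^\infty}\lesssim \|u_0\|_{Z_{b,b'}(\R^n)},$$
and by definition $\|e^{t\Delta}u_0\|_{Y_{b,b'}}=\|u_0\|_{Z_{b,b'}(\R^n)}$. Hence
$$\|e^{t\Delta}u_0\|_X\le C_0\|u_0\|_{Z_{b,b'}(\R^n)}\le C_0\varepsilon.$$

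\textbf{Step 2: contraction on a small ball.} Define $\mathcal{N}(u)=e^{t\Delta}u_0+T(u)$ on the closed ball $B_\delta=\{u\in X:\|u\|_X\le \delta\}$ with $\delta=2C_0\varepsilon$. Lemma \ref{Tbound} gives
$$\|\mathcal{N}(u)\|_X\le C_0\varepsilon + C\delta^b\le\delta,$$
provided $C\delta^{b-1}\le 1/2$. Lemma \ref{Condiff} gives
$$\|\mathcal{N}(u_1)-\mathcal{N}(u_2)\|_X\le C(2\delta)^{b-1}\|u_1-u_2\|_X\le \tfrac12\|u_1-u_2\|_X$$
for $\varepsilon$ small enough.

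I need $X$ to be complete. The $L^\infty$-weighted part is clearly complete. For the $Y_{b,b'}$ part, I would argue as follows. A Cauchy sequence converges in $L^b_{\rm loc}$, because compact sets in $\R^{n+1}_+$ have finite capacity. The capacity-weighted bound then passes to the limit by Fatou's lemma on each fixed compact $E$.

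Banach's fixed point theorem then yields a unique solution in $B_\delta$, which is the "unique small solution". It is a distributional solution of \eqref{ABC}: $|u|^b\in L^1_{\rm loc}$, the Duhamel formula is valid, and $e^{t\Delta}u_0\to u_0$ in $\mathcal{S}'$.

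\textbf{Step 3: the pointwise bound \eqref{pwebyI}.} From $u=e^{t\Delta}u_0+T(u)$ I get
$$|u|\le |e^{t\Delta}u_0| + \Heat*(|u|^b\upchi_{(0,\infty)})\lesssim |e^{t\Delta}u_0|+\mathbb{I}_2[|u|^b\upchi_{(0,\infty)}],$$
so it remains to control $\mathbb{I}_2[|u|^b]$ by $\mathbb{I}_2[|e^{\tau\Delta}u_0|^b]$.

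My first approach is to run the iteration $u_{k+1}=\mathcal{N}(u_k)$, $u_0\equiv e^{t\Delta}u_0$, and prove inductively the pointwise bound
$$|u_k|\le |e^{t\Delta}u_0|+M\,\mathbb{I}_2[\nu],\qquad \nu=|e^{\tau\Delta}u_0|^b\upchi_{(0,\infty)}.$$
The induction step uses
$$\mathbb{I}_2\big[(|e^{t\Delta}u_0|+M\mathbb{I}_2\nu)^b\big]\lesssim \mathbb{I}_2\nu + M^b\,\mathbb{I}_2\big[(\mathbb{I}_2\nu)^b\big].$$
The key input is the condition in Proposition \ref{prop:equiv} with $p=b'$ (so $p'=b$). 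The hypothesis that $\nu(E)\le\varepsilon^b\,{\rm cap}_{2,1,b'}(E)$ is equivalent, by Theorem \ref{thm:cap-equiv}, to the same bound with ${\rm Cap}_{\mathbb{I}_2,b'}$. Proposition \ref{prop:equiv} then gives
$$\mathbb{I}_2\big[(\mathbb{I}_2\nu)^b\big]\le C_4\,\mathbb{I}_2\nu,\qquad C_4\simeq(\varepsilon^b)^{1/(b'-1)}=\varepsilon^{b(b-1)}.$$
This constant is small, so the induction closes for a fixed $M$ once $\varepsilon$ is small. Passing to the limit, $u_k\to u$ in $X$, hence a.e. along a subsequence, which gives \eqref{pwebyI}.

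I expect the main obstacle to be this last step. One must check that the constant in Proposition \ref{prop:equiv} scales with the capacity constant as stated (which it does), and also handle the restriction of $\nu$ to $t>0$, since $\nu$ is supported there and $E$ ranges over compacts in $\R^{n+1}_+$. Uniqueness and the fixed-point part are routine given Lemmas \ref{Tbound} and \ref{Condiff}.
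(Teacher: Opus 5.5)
Your proposal is correct and follows essentially the same route as the paper. Both arguments run a contraction for $e^{t\Delta}u_0+T(\cdot)$ on a small ball of $X$ via Lemmas \ref{ZtoBes}, \ref{Tbound}, \ref{Condiff}, then prove the pointwise bound on the Picard iterates by induction, using Proposition \ref{prop:equiv} with the small constant $\simeq\varepsilon^{b(b-1)}$.

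The only minor difference is how you pass to the limit. You use that convergence in $X$ already gives a.e. convergence: the weighted $L^\infty$ part alone yields $|u_N-u|\le t^{-\frac{1}{b-1}}\|u_N-u\|_X$ a.e. in $\R^{n+1}_+$. The paper instead proves the extra geometric bound $|u_{N+1}-u_N|\le c\,2^{-N}\mathbb{I}_2(|U|^b)$ to get a.e. convergence of the iterates directly.
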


\begin{proof} 
For  $u_0\in Z_{b, b'}(\R^n)$ and $v\in X$, we define
\begin{align*}
S(v)&= e^{t\Delta} u_0 + \int_{0}^t e^{(t-\tau) \Delta} \left(|v(\cdot, \tau)|^{b-1}v(\cdot, \tau)\right) d\tau\\
&= e^{t\Delta} u_0 + T(v).
 \end{align*}

Since $b>1$, by Lemmas \ref{ZtoBes}, \ref{Tbound}, and \ref{Condiff}, there exist $\varepsilon_1=\varepsilon_1(n,b)>0$ and $\delta=\delta(n,b)>0$ such that for any $\varepsilon\in (0, \varepsilon_1]$, the map 
$$S: \overline{B}_\delta^X\rightarrow \overline{B}_\delta^X$$
is a contraction, where  $\overline{B}_\delta^X=\{v\in X: \|v\|_{X}\leq \delta\}$. Thus, by Banach's Fixed Point Theorem, there exists a unique solution $u\in \overline{B}_\delta^X$ to the equation
$u=S(u)$. Moreover, we have 
$$u=\lim_{N\rightarrow \infty} u_N \quad {\rm in} \quad X,$$ 
where the sequence of functions $\{u_N\}_{N\in \mathbb{N}}$ is defined  by  $u_1(t)=e^{t\Delta}u_0$ and 
\begin{align}\label{solapp}
u_{N+1}(t) &=e^{t\Delta}u_0   +\Heat \ast (|u_N|^{b-1}u_N\Chi{(0,\infty)}{\cdot}), \quad \forall N\geq 1.
\end{align}

It remains to show the pointwise estimate \eqref{pwebyI}. To this end, 
let $U(t)=e^{t\Delta}u_0= e^{t\Delta}u_0 \upchi_{(0,\infty)}(t)$. By Theorem \ref{thm:cap-equiv},  we have  
\begin{align*}
\int_{E}|U|^bdxdt \leq c_1 \varepsilon^b \PRcap{2}{b'}(E)  
\end{align*}
for all compact sets $E\subset \R^{n+1}$.
Then, by Proposition \ref{prop:equiv},
\begin{align}\label{pwiterep}
\PR{2}(\PR{2}(|U|^b))^b\leq c_2 \varepsilon^{b(b-1)}\PR{2}(|U|^b) <+\infty \quad {\rm a.e.}   
\end{align}

 We next  claim that  there exists a positive constant $\varepsilon_2\leq \varepsilon_1 $ such that for all $\varepsilon\in (0, \varepsilon_2]$, it holds that  
\begin{equation}\label{Claimpw}
|u_N|\leq |U| + C\, \PR{2}(|U|^b), \quad \forall N\geq 1.    
\end{equation}

Indeed,  $|u_1|\leq |U|$ and suppose that for some $N\geq 1$, we have 
\begin{equation*}
|u_N|\leq |U| + C_N\, \PR{2}(|U|^b).
\end{equation*}
Then it follows from \eqref{pwiterep} that 
\begin{align*}
|u_{N+1}| & \leq    |U| + c_3 \mathbb{I}_2[(C_N)^b \mathbb{I}_2(|U|^b)^b]  + c_3 \mathbb{I}_2[|U|^b]\\
&\leq  |U| + c_3 \mathbb{I}_2[|U|^b] \left\{  (C_N)^b c_2 \varepsilon^{b(b-1)}  +1 \right\},
\end{align*}
 where $c_3=c_3(n,b)$. Thus, 
\begin{align*}
|u_{N}|  \leq  |U|+ C_{N} \mathbb{I}_2[|U|^b], \quad \forall N\geq 1,
\end{align*}
 with $$C_1=0 {\rm ~and~} C_{N+1}= c_3 \left\{  (C_N)^b c_2 \varepsilon^{b(b-1)}  +1 \right\} {\rm ~for~} N\geq 1.$$
By induction, it is then easy to see that 
$C_N\leq c_3 b'$
for all  $N\geq 1$, provided  $\varepsilon \leq \varepsilon_2$, where $\varepsilon_2\leq \varepsilon_1$ is such that 
$$c_2 \varepsilon_2^{b(b-1)}\leq (c_3)^{-b} (b-1)^{b-1} b^{-b}.$$
This   proves the claim  \eqref{Claimpw} with $C=c_3 b'.$

To continue, we will now show that  there exists $c=c(n)>0$ such that
\begin{equation}\label{del}
|u_{N+1}-u_N|\leq c\, 2^{-N}    \PR{2}(|U|^b), \quad \forall N\geq 1.
\end{equation}

To see this, note that 
\begin{equation*}
|u_2-u_1|\leq a_1\PR{2}(|U|^b),
\end{equation*}
and by induction, suppose that 
\begin{equation}\label{diffN}
|u_{N}-u_{N-1}|\leq a_{N-1} \PR{2}(|U|^b)
\end{equation}
for some $N\geq 2$. Then using \eqref{solapp}, \eqref{diffN},  and  inequality \eqref{ABIn},
we bound
\begin{align*}
|u_{N+1}-u_{N}|& \leq C\, \PR{2}[||u_N|^{b-1}u_N-|u_{N-1}|^{b-1}u_{N-1}|]\\
&\leq C  \, \PR{2}[|u_N -u_{N-1}| (|u_N|^{b-1} + |u_{N-1}|^{b-1})]\\
&\leq C  a_{N-1} \, \PR{2} \left[ \PR{2}(|U|^b) (|u_N|^{b-1} + |u_{N-1}|^{b-1}) \right].\,
\end{align*}
where $C=C(n,b)$.
Combining this with \eqref{Claimpw}, we obtain 
\begin{align*}
|u_{N+1}-u_N|&\leq C a_{N-1} \PR{2}\left[\PR{2}(|U|^b) \left\{\PR{2} (|U|^b)^{b-1}+|U|^{b-1} \right  \} \right]\\
&\leq C a_{N-1}  \PR{2}[\PR{2}(|U|^b)^b] +  C a_{N-1}  \PR{2}\left[\PR{2}(|U|^b) |U|^{b-1}  \right].
\end{align*}
Note that, by Young's inequality,  
\begin{align*}
 (\PR{2}|U|^b)|U|^{b-1}\leq  c(\beta)(\PR{2}|U|^b)^b+\beta |U|^{b}, \quad \forall \beta>0,   
\end{align*}
which then yields 
\begin{align*}
|u_{N+1}-u_N| & \leq \{C a_{N-1}(1+c(\beta))  \} \PR{2}[\PR{2}(|U|^b)^b] +  C a_{N-1}  \beta  \PR{2}[|U|^b]\\
& \leq  \{C a_{N-1}(1+c(\beta))  \}  c_2 \varepsilon^{b(b-1)} \PR{2}[|U|^b] +  C a_{N-1}  \beta  \PR{2}[|U|^b],
\end{align*}
where we used \eqref{pwiterep} in the last bound. Hence,
\begin{align*}
|u_{N+1}-u_N|&\leq a_{N}\PR{2}[|U|^b],
\end{align*}
with $a_N\leq \frac 12 a_{N-1}$, provided we choose $\beta>0$ such that $C   \beta\leq 1/4$ and 
 $\varepsilon\in (0,\varepsilon_3]$, where $\varepsilon_3\leq \varepsilon_2$ is such that 
$$ \{C (1+c(\beta))  \}  c_2 \varepsilon_3^{b(b-1)}\leq 1/4.$$ 
This yields \eqref{del} with  $c=2 a_1$ as desired. At this point, using \eqref{del}, we have the pointwise limit
\begin{equation}\label{pointwiselimU}
u=\lim_{N\rightarrow \infty} u_N = u_1+ \sum_{N=1}^\infty(u_{N+1}-u_N),
\end{equation}
where the series   converges absolutely pointwise a.e. Finally, letting $N\rightarrow\infty$ in 
 \eqref{Claimpw}, we arrive at the pointwise estimate \eqref{pwebyI}.
\end{proof}

Our next goal is to show the regularity/persistency of solutions obtained in Theorem \ref{thm:maintheorem} when the initial datum $u_0$ belongs to 
$Z_{b,b'}(\R^n)\cap L^a(\R^n)$, $a>1$.  We begin with the following lemma.

\begin{lemma}\label{UnEst}
Let $b>1+\frac 2 n$ and $a>1$.  Then for any $u, v \in X\cap L^\infty_tL^a_x$, we have 
$$\|T(u)-T(v)\|_{L^\infty_tL^a_x} \leq C  \|u -v\|_{X}^{\frac{b-1}{b}} \| |u| + |v|\|_{X}^{\frac{(b-1)^2}{b}} \| |u| + |v| \|_{L^\infty_tL^a_x}.$$
\end{lemma}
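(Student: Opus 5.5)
By \eqref{ABIn} we have the pointwise bound
$$|T(u)-T(v)|(\cdot,t)\le b\int_0^t e^{(t-\tau)\Delta}\big[|u-v|\,(|u|+|v|)^{b-1}\big](\tau)\,d\tau .$$
Put $w=|u|+|v|$. We estimate this integral in $L^a_x$ for each fixed $t>0$, splitting the time integral at $t/2$ as in the proof of Lemma \ref{Tbound}.

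\emph{Piece over $(t/2,t)$.} Since $e^{(t-\tau)\Delta}$ is a contraction on $L^a$, we put $w$ in $L^a$ and everything else in $L^\infty$:
$$\big\||u-v|\,w^{b-1}\big\|_{L^a}\le \|u-v\|_{L^\infty}\,\|w\|_{L^\infty}^{b-2}\,\|w\|_{L^a}.$$
We want the exact powers $\|u-v\|_X^{(b-1)/b}\|w\|_X^{(b-1)^2/b}$, so we do not write $|u-v|\le |u-v|^{(b-1)/b}w^{1/b}$ crudely. Instead we use
$$|u-v|\,w^{b-1}\le |u-v|^{\frac{b-1}{b}}\,w^{\frac1b+b-1}=|u-v|^{\frac{b-1}{b}}\,w^{\frac{(b-1)^2}{b}}\,w .$$
The factor $w$ goes in $L^a$. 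The other two factors go in $L^\infty$, which gives the weight
$$\tau^{-\frac{1}{b-1}\left(\frac{b-1}{b}+\frac{(b-1)^2}{b}\right)}=\tau^{-1}.$$
Integrating $\tau^{-1}$ over $(t/2,t)$ gives $\log 2$, uniformly in $t$. So this piece is bounded by the right-hand side of the lemma, with $\|\cdot\|_X$ controlling the $L^\infty$ weights.

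\emph{Piece over $(0,t/2)$.} This is the main obstacle, because $\tau^{-1}$ is not integrable near $0$. We need the $Y_{b,b'}$ part of the $X$-norm, as in \eqref{T1}. Using \eqref{Rntok}, the kernel of $e^{(t-\tau)\Delta}$ for $\tau<t/2$ is dominated by $t^{-n/2}\sum_k(1+|k|)^{-n-1}\upchi_{k}$, where $\upchi_k$ is the indicator of a cube of side $\sqrt t$. We plan to write
$$|u-v|\,w^{b-1}=\big(|u-v|^{\frac{b-1}{b}}w^{\frac{(b-1)^2}{b}}\big)\cdot w,$$
apply Minkowski to move the $L^a_x$ norm inside the sum over cubes and the $\tau$ integral, and then use Hölder in space-time on each parabolic cylinder $Q_k$ of size $\sqrt t$. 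The factor $w$ is taken in $L^\infty_\tau L^a_x$. The factor $|u-v|^{(b-1)/b}w^{(b-1)^2/b}$ is a product of powers totaling $b-1$, and we take it in $L^{b/(b-1)}$ in $\tau$ (and in $x$ after the Minkowski step). By Hölder this is at most
$$\Big(\iint_{Q_k}|u-v|^b\Big)^{\frac{b-1}{b^2}}\Big(\iint_{Q_k}w^b\Big)^{\frac{(b-1)^2}{b^2}}.$$
The capacity bound ${\rm cap}_{2,1,b'}(Q_r)\simeq r^{n+2-2b'}$ gives $\iint_{Q_k}|g|^b\lesssim \|g\|_{Y_{b,b'}}^b\, t^{(n+2-2b')/2}$. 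The remaining task is to check that the powers of $t$ cancel, using the Gaussian tail to sum over $k$.

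If the spatial exponents fail to balance (the $L^a$ norm sitting inside a space-time Hölder inequality is delicate), the fallback is to take the factor $w$ in $L^a_x$ only after the convolution. That is, first bound the convolution pointwise by $t^{-n/2}$ times local $L^1$ averages, and apply Hölder only in time. Alternatively, we can interpolate using the pointwise bound $w(\tau)\lesssim \tau^{-1/(b-1)}$ only on part of the power, chosen to make the $\tau$ integral converge: keeping $\tau^{-\theta}$ with $\theta<1$ near $0$ and trading the rest for $Y$-norms. Combining the two time pieces and taking the supremum over $t$ yields the stated estimate.
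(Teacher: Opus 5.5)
Your estimate on the range $(t/2,t)$ is correct. The splitting $|u-v|\,w^{b-1}\le |u-v|^{\frac{b-1}{b}}w^{\frac{(b-1)^2}{b}}\,w$, with the first two factors in $L^\infty$, gives exactly $\tau^{-1}$, which is harmless there. The gap is the range $(0,t/2)$, which carries the whole difficulty and which you only sketch. As written, the sketch does not close, for three reasons.

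\emph{The main plan.} If you use Minkowski to move $\|\cdot\|_{L^a_x}$ inside the $\tau$-integral, no space-time cylinder is left on which to use $\|\cdot\|_{Y_{b,b'}}$. At fixed $\tau$ you then need $g(\tau)$ either in $L^\infty$, which gives the non-integrable $\tau^{-1}$ again, or in some fixed-time spatial norm (global or uniformly local). $Y_{b,b'}$ controls neither: it bounds $\sup_Q\iint_Q$, not $\int_0^{t/2}\sup_Q$. If instead you keep the cylinder $Q_k(x)$ and put $w$ in $L^\infty_\tau L^a_x(Q_k(x))$, a supremum in $\tau$ ends up inside the $L^a_x$-integral. $\|w\|_{L^\infty_tL^a_x}$ does not control that either.

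\emph{A repaired version.} Put $g$ in $L^{b'}(Q_k(x))$, bounded uniformly in $x$ via the $Y_{b,b'}$ norm and ${\rm cap}_{2,1,b'}(Q_{\sqrt t})\simeq t^{(n+2-2b')/2}$. Put $w$ in $L^{b}(Q_k(x))$, then apply Minkowski in $L^{a/b}_x$. This does close, but only for $a\ge b$. For $1<a<b$ it fails because $a/b<1$.

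\emph{The last fallback.} It is too vague to check. Its natural implementation ($w^{b-a}$ in $L^\infty$, $w^a$ in $L^1_x$) yields $\|w\|_{L^\infty_tL^a_x}^a$, and a power of $t$ that cancels only when $a=n(b-1)/2$. Tellingly, nothing in your sketch uses $a>1$.

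The missing idea is to interpolate the output rather than split the integrand. This is what the paper does, with no time splitting at all in this lemma. Set $h=|u-v|(|u|^{b-1}+|v|^{b-1})$ and $F(t)=\int_0^t e^{(t-\tau)\Delta}h(\tau)\,d\tau$.

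\begin{itemize}
\item Lemma \ref{Tbound} applied to $h^{1/b}$ gives $\|F(t)\|_{L^\infty}\lesssim t^{-\frac{1}{b-1}}\|h^{1/b}\|_X^b$. This is where the $Y_{b,b'}$ norm and the early-time splitting enter.
\item H\"older gives $\|h^{1/b}\|_X\lesssim\|u-v\|_X^{1/b}\|w\|_X^{(b-1)/b}$.
\item If $a\ge b$, use $\|F\|_{L^a}\le\|F\|_{L^\infty}^{\frac{b-1}{b}}\|F\|_{L^{a/b}}^{\frac1b}$ with $\|F(t)\|_{L^{a/b}}\lesssim t\,\|w\|_{L^\infty_tL^a_x}^b$.
\item If $1<a<b$, use $\|F\|_{L^a}\le\|F\|_{L^\infty}^{\frac{a-1}{a}}\|F\|_{L^1}^{\frac1a}$ together with
\[
\|h(\tau)\|_{L^1}\le\|h^{1/b}(\tau)\|_{L^\infty}^{b-a}\,\|h^{1/b}(\tau)\|_{L^a}^{a}\lesssim\|h^{1/b}\|_X^{b-a}\,\tau^{-\frac{b-a}{b-1}}\,\|w\|_{L^\infty_tL^a_x}^{a}.
\]
\end{itemize}

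In the last case the time integral converges precisely because $a>1$, and the powers of $t$ cancel. The exponent $1/a$ is what makes the final bound linear in $\|w\|_{L^\infty_tL^a_x}$.
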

\begin{proof}
By \eqref{ABIn}, we have 
\begin{equation}\label{Tuv}
|T(u)(x,t)-T(v)(x,t)|\leq C \int_0^t e^{(t-\tau)\Delta}h(u,v)(x) d\tau,  
\end{equation}
where $h(u,v)=|u-v|(|u|^{b-1} + |v|^{b-1})$.

Suppose first that $1+\frac{2}{n}<b\leq a$. Then from \eqref{Tuv} and  by Lemma \ref{Tbound}, we find
\begin{align*}
&\left\|[T(u)-T(v)](\cdot, t) \right\|_{L^a_x}\\
&\leq  \left\|[T(u)-T(v)](\cdot, t) \right\|_{L^\infty_x}^{\frac{b-1}{b}}  \left\|[T(u)-T(v)](\cdot, t) \right\|_{L^\frac{a}{b}_x}^{\frac{1}{b}} \\
&\lesssim t^{-\frac{1}{b}} \|h(u,v)^{\frac{1}{b}}\|_{X}^{b-1}  \left( \int_0^t  \| h(u,v)(\cdot, \tau) \|_{L^{\frac{a}{b}}_x} d\tau \right)^{\frac 1 b}\\
&\lesssim  \|u-v\|_{X}^{\frac{b-1}{b}}  \||u|+|v|\|_{X}^{\frac{(b-1)^2}{b}}   \sup_{\tau\in[0, \infty)}\| |u(\cdot, \tau)| + |v(\cdot, \tau)| \|_{L^a_x}.
\end{align*}

Thus we obtain the lemma in the case $b\leq a$. 

Suppose now that   $b> a>1$. We have
\begin{align*}
&\left\|[T(u)-T(v)](\cdot, t) \right\|_{L^a_x}\\
&\leq  \left\|[T(u)-T(v)](\cdot, t) \right\|_{L^\infty_x}^{\frac{a-1}{a}}  \left\|[T(u)-T(v)](\cdot, t) \right\|_{L^1_x}^{\frac{1}{a}} \\
&\lesssim t^{-\frac{1}{b-1}\frac{a-1}{a}} \|h(u,v)^{\frac{1}{b}}\|_{X}^{\frac{b(a-1)}{a}}  \left( \int_0^t  \| h(u,v)(\cdot, \tau) \|_{L^{1}_x} d\tau \right)^{\frac 1 a}.
\end{align*}

Also,  note that 
\begin{align*}
\| h(u,v)(\cdot, \tau) \|_{L^{1}_x} &\leq \| h(u,v)(\cdot, \tau)^{\frac{1}{b}} \|_{L^{\infty}_x}^{b-a} \| h(u,v)(\cdot, \tau)^{\frac{1}{b}} \|_{L^{a}_x}^{a}  \\
&\lesssim \| h(u,v)^{\frac 1 b} \|_{X}^{b-a} \tau^{-\frac{b-a}{b-1}} \sup_{\tau\in[0, \infty)}\| |u(\cdot, \tau)| + |v(\cdot, \tau)| \|_{L^a_x}^a.
\end{align*}


Thus, since $a>1$, we get 
\begin{align*}
&\left\|[T(u)-T(v)](\cdot, t) \right\|_{L^a_x}\\
&\lesssim t^{-\frac{1}{b-1}\frac{a-1}{a}} \| h(u,v)^{\frac 1 b} \|_{X}^{b-1} 
 \left( \int_0^t \tau^{-\frac{b-a}{b-1}} d\tau \right)^{\frac 1 a}   \| |u| + |v| \|_{L^\infty_tL^a_x} \\
 &\lesssim  \|u-v\|_{X}^{\frac{b-1}{b}}  \||u|+|v|\|_{X}^{\frac{(b-1)^2}{b}}  \| |u| + |v| \|_{L^\infty_tL^a_x}.
\end{align*}

This proves the lemma in the case $b>a$.
\end{proof}

%

\smallskip

We next prove a persistency result in $L^a(\R^n)$  for any $1<a<\infty$.

\begin{theorem}\label{PerRes}
 Let $b>1+\frac 2 n$ and $a>1$. Suppose that $u_0\in Z_{b,b'}(\R^n)\cap L^a(\R^n)$. Then the solution $u$ obtained in Theorem 
\ref{thm:maintheorem} also belongs to the space  $L^\infty((0,\infty), L^a(\R^n)) \cap C((0,\infty), L^a(\R^n))$.  Moreover, if $a\geq \frac{n(b-1)}{2}$, then $u\in C([0,\infty), L^a(\R^n))$, and thus by uniqueness it coincides wih the corresponding Weissler's solution. 
\end{theorem}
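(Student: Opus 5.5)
We propagate the $L^a$ bound along the Picard iterates $u_N$ from the proof of Theorem \ref{thm:maintheorem}; Lemma \ref{UnEst} is built for exactly this. Set $W=L^\infty((0,\infty),L^a(\R^n))$. Since $e^{t\Delta}$ is a contraction on $L^a$, we have $u_1=e^{t\Delta}u_0\in W$ with $\|u_1\|_{W}\le\|u_0\|_{L^a}$. Apply Lemma \ref{UnEst} with $v=0$, using $T(0)=0$:
$$\|T(u_N)\|_{W}\le C\|u_N\|_X^{\frac{b-1}{b}+\frac{(b-1)^2}{b}}\|u_N\|_{W}=C\|u_N\|_X^{b-1}\|u_N\|_W\le C\delta^{b-1}\|u_N\|_W.$$
Choose $\delta$, and hence $\varepsilon$, small enough that $C\delta^{b-1}\le 1/2$. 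Then $\|u_{N+1}\|_W\le\|u_0\|_{L^a}+\tfrac12\|u_N\|_W$, so by induction $\|u_N\|_W\le 2\|u_0\|_{L^a}$ for all $N$.

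Next we show the iterates converge in $W$. By Lemma \ref{UnEst},
$$\|u_{N+1}-u_N\|_W=\|T(u_N)-T(u_{N-1})\|_W\lesssim \|u_N-u_{N-1}\|_X^{\frac{b-1}{b}}\,\delta^{\frac{(b-1)^2}{b}}\,\|u_0\|_{L^a}.$$
Since $S$ is a contraction on $\overline B^X_\delta$, $\|u_N-u_{N-1}\|_X\lesssim\theta^N$ for some $\theta<1$. The right-hand side therefore decays geometrically, and $\{u_N\}$ is Cauchy in $W$. Its $W$-limit agrees a.e. with the $X$-limit $u$, because both are pointwise a.e. limits along a subsequence (compare \eqref{pointwiselimU}). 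Hence $u\in W$.

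For continuity on $(0,\infty)$, write $u(t)=e^{(t-s)\Delta}u(s)+\int_s^t e^{(t-\tau)\Delta}(|u|^{b-1}u)(\tau)\,d\tau$ for $t>s>0$. We have $u(s)\in L^a$, and on $[s,\infty)$ the bound $\|u(\tau)\|_{L^\infty}\lesssim s^{-1/(b-1)}$ holds, so $|u|^{b-1}u\in L^\infty([s,\infty),L^a)$. The first term is continuous in $L^a$ by strong continuity of the heat semigroup. The Duhamel term has $L^a$ norm $O(t-s)$, and the same representation gives left continuity. Thus $u\in C((0,\infty),L^a)$.

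For $a\ge n(b-1)/2$ we also need continuity at $t=0$, and this is the main obstacle. Since $e^{t\Delta}u_0\to u_0$ in $L^a$, it suffices to show $\|T(u)(t)\|_{L^a}\to0$ as $t\to0$. Rerun the proof of Lemma \ref{UnEst} restricted to $(0,t_0)$, replacing $\|u\|_X$ by the truncated quantities $\sup_{0<s<t_0}s^{1/(b-1)}\|u(s)\|_\infty$ and $\|u\|_{\widetilde Y_b^{\sqrt{nt_0}}}$ that appear in Lemma \ref{Tbound}. This gives
$$\sup_{t<t_0}\|T(u)(t)\|_{L^a}\lesssim \big(\text{local }X\text{-norm of }u\text{ on }(0,t_0)\big)^{b-1}\,\|u\|_W.$$
It then remains to show the local norm tends to $0$ as $t_0\to0$. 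I would try first a density argument: approximate $u_0$ in $L^a$ by $\phi\in C_c^\infty$. For $\phi$, $\sup_{s<t_0}s^{1/(b-1)}\|e^{s\Delta}\phi\|_\infty\to0$, and the truncated $T_{b,0}$ norm also tends to $0$, using $a\ge n(b-1)/2$ and Hölder on small cylinders. For the remainder $u_0-\phi$, the truncated $T_{b,0}^{M}$ norm is bounded by $\|u_0-\phi\|_{L^a}$; this scaling bound holds exactly when $a\ge n(b-1)/2$. Lemma \ref{ZtoBes} then controls the corresponding $L^\infty$ weight. The map $S$ is a contraction for these truncated norms too, so the smallness transfers from $e^{t\Delta}u_0$ to $u$ and gives $u(t)\to u_0$ in $L^a$. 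Finally, $u\in C([0,\infty),L^a)$ together with the $L^\infty$ decay places $u$ in the Brezis--Cazenave uniqueness class, so $u$ coincides with Weissler's solution.
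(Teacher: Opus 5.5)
The gap is in the continuity at $t=0$ for $a\ge \frac{n(b-1)}{2}$. Your reduction $\sup_{t<t_0}\|T(u)(t)\|_{L^a}\lesssim(\text{local norm of }u)^{b-1}\|u\|_{W}$ is fine. What you then need is that the local norm \emph{of $u$}, not of $e^{t\Delta}u_0$, tends to $0$.

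You check smallness of $e^{t\Delta}u_0$ only in the truncated Morrey-type norms $\widetilde{Y}^{M}_b$ and $T^{M}_{b,0}$, together with the weighted $L^\infty$ norm. You then assert that $S$ is a contraction for these norms, and nothing supports this. The only control the paper has on the space--time $L^b$ part of $T(u)$ is the capacitary bound $\|T(u)\|_{Y_{b,b'}}\lesssim\|u\|^b_{Y_{b,b'}}$, which comes from Proposition~\ref{prop:equiv}. A Morrey bound on $|u|^b$ at the exact exponent gives only weak-type information on $\mathbb{I}_2(|u|^b)$. This is exactly why Theorem~\ref{KT-FP} needs $\epsilon>0$ to get $T_{b,\epsilon}\hookrightarrow Z_{b,b'}$, and at $\epsilon=0$ has only $Z_{b,b'}\hookrightarrow T_{b,0}$. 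So there is a mismatch: the norm in which you check smallness is not one on which $T$ is bounded, and the norm on which $T$ is bounded is not one in which you check smallness.

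To rescue your route, work with the time-truncated capacitary norm $\|u\,\upchi_{(0,t_0)}\|_X$. This does close, because $T(u)$ on $(0,t_0)$ only sees $u\,\upchi_{(0,t_0)}$. You would then have to prove $\|e^{t\Delta}u_0\,\upchi_{(0,t_0)}\|_{Y_{b,b'}}\to 0$, a capacitary statement that is absent from your sketch. One way: for the remainder, use $L^{n(b-1)/2}\hookrightarrow Z_{b,b'}(\R^n)$, which follows from $|K|^{1-\frac{2}{n(b-1)}}\lesssim {\rm Cap}_{{\bf I}_{2/b},b'}(K)$ and Corollary~\ref{MorreyandCapspace}. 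For the smooth part, use a Fefferman--Phong argument as in Theorem~\ref{KT-FP}.

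The paper needs no smallness near $t=0$ in any space--time norm.
\begin{itemize}
\item For $a>\frac{n(b-1)}{2}$, \eqref{Talb} and \eqref{Tanb} give directly $\|T(u)(t)\|_{L^a}\lesssim t^{\gamma}$ with $\gamma>0$.
\item At $a=\frac{n(b-1)}{2}$, it uses $u_N\to u$ in $L^\infty_tL^a_x$ to reduce to continuity of each iterate at $0$. It then approximates $v(s)$ by $\phi\in C_c^\infty$ in $L^a$ only.
\end{itemize}
If $a=\frac{n(b-1)}{2}\ge b$, the pure $L^{a/b}\to L^a$ smoothing in \eqref{Talb} produces $\int_0^t(t-s)^{-1}ds$, which diverges. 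In that case part of the weighted $L^\infty$ bound must be spent, in the spirit of \eqref{Tanb}.

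Two smaller points.
\begin{itemize}
\item Your choice $C\delta^{b-1}\le\frac12$ makes $\varepsilon$ depend on $a$, because the constant in Lemma~\ref{UnEst} blows up as $a\to1^+$. So you do not cover all data of Theorem~\ref{thm:maintheorem}. The paper's product bound $B_{N+1}\le B_N(1+C\kappa^{N(b-1)/b})$ uses only $\kappa<1$ and avoids this.
\item Left continuity on $(0,\infty)$ does not follow from the $O(t-s)$ bound. Based at $t$, the term $(e^{(\tau-t)\Delta}-I)u(t)$ involves a moving $u(t)$, which strong continuity alone does not control. You need a dominated convergence argument on the Duhamel integral from a fixed base point, as in the paper.
\end{itemize}
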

\begin{proof} Let $S, T$, $\varepsilon=\varepsilon(n,b)$,  $\delta=\delta(n,b)$, and $\{u_N\}_{N\in \mathbb{N}}$ be as in  the proof of Theorem \ref{thm:maintheorem}. Recall that $u_1=e^{t\Delta} u_0$.  Then $S:  \overline{B}^X_\delta \rightarrow  \overline{B}^X_\delta$ and there is constant $\kappa = \kappa(n, b)\in (0,1)$ such that 
$$\|T(v_1) - T(v_2)\|_X= \|S(v_1) - S(v_2)\|_X\leq \kappa \|v_1 -v_2\|_X$$
for all $v_1, v_2\in \overline{B}^X_\delta$.  Moreover,
$$\|u_{N+1}-u_N\|_X\leq C \kappa^N, \qquad \forall N\in\mathbb{N}.$$

On the other hand, it can be shown by Lemma \ref{UnEst} and induction that $\{u_N\}_{N\in\mathbb{N}} \subset L^\infty_t L^a_x$.
Thus, by Lemma \ref{UnEst}, we have, for any $N\geq 2$, 
\begin{align}\label{DiffinL}
\|u_{N+1}-u_N \|_{L^\infty_t L^a_x} &=\|T(u_{N})- T(u_{N-1}) \|_{L^\infty_t L^a_x}\\
&\leq C \kappa^{\frac{N(b-1)}{b}}   \||u_{N}| + |u_{N-1}| \|_{L^\infty_t L^a_x} \nonumber\\
&\leq C \kappa^{\frac{N(b-1)}{b}}    \max_{1\leq j\leq N} \|u_j\|_{L^\infty_t L^a_x},\nonumber
\end{align}
where we used that $\{u_N\}_{N\in \mathbb{N}}\subset \overline{B}^X_\delta$. By applying Lemma \ref{UnEst} with $u=u_1$ and $v=0$, inequality \eqref{DiffinL} also holds in the case $N=1$. For each integer $N\geq 1$, we now set
$$B_N=\max_{1\leq j\leq N} \|u_j\|_{L^\infty_t L^a_x}.$$

Then as $u_{N+1}= (u_{N+1} -u_{N}) +u_{N}$, we find 
$$\|u_{N+1}\|_{L^\infty_t L^a_x}\leq C \kappa^{\frac{N(b-1)}{b}} B_N + B_N=B_N(1+C \kappa^{\frac{N(b-1)}{b}}),$$
 which yields  that 
$$B_{N+1} \leq B_N(1+C \kappa^{\frac{N(b-1)}{b}}), \qquad N\geq 1.$$ 

Thus,
\begin{align*}
B_N &\leq B_{N-1} (1+C \kappa^{\frac{N(b-1)}{b}}) \\
& \leq  B_{N-2} (1+C \kappa^{\frac{N(b-1)}{b}})  (1+C \kappa^{\frac{(N-1)(b-1)}{b}})\\
&\leq B_1 \Pi_{j=1}^\infty  (1+ C \kappa^{\frac{j(b-1)}{b}})\\
&\leq C \|u_0\|_{L^a}.
\end{align*}

By \eqref{DiffinL}, this yields 
$$\|u_{N+1}-u_N \|_{L^\infty_t L^a_x} \leq C \kappa^{\frac{N(b-1)}{b}} \|u_0\|_{L^a}, \qquad \forall N\geq 1,$$
which implies that $u_N$ also converges to $u$ in $L^\infty_t L^a_x$.
At this point,  using \eqref{pointwiselimU}, Fatou's lemma and the above inequality, we arrive at 
\begin{equation*}
\|u\|_{L^\infty_t L^a_x}\leq C \|u_0\|_{L^a}.  
\end{equation*}

To show that 
$u \in C((0,\infty), L^a(\R^n))$,  it suffices to show that  $$T(u)\in C((0,\infty), L^a(\R^n))$$ for any $u\in X$. 
To this end, fix $t_0\in (0,\infty)$ and for any $t>t_0$, we 
 write 
\begin{align*}
T(u)(t)&=e^{(t-t_0)\Delta}T(u)(t_0)+ \int_{t_0}^{t} e^{(t-s)\Delta}(|u(s)|^{b-1} u(s)) ds \\
&=: A(t)+B(t).
\end{align*}
 
 Since $T(u)(t_0)\in L^a(\R^n)$ (by Lemma \ref{UnEst}) and the heat semigroup is  strongly continuous on $L^a$, we see that $A(t)\in C((t_0,\infty), L^a(\R^n))$.

To show the continuity of $B(t)$, we fix $\tau>t_0$. If $t>\tau$, then

\begin{align*}
B(t)-B(\tau)&= \int_{t_0}^{t} e^{(t-\tau)\Delta} e^{(\tau-s)\Delta}(|u(s)|^{b-1} u(s)) ds - B(\tau)\\
& =e^{(t-\tau)\Delta} B(\tau) - B(\tau). 
\end{align*}

Since $\sup_{s\geq t_0}\| u(s)\|_{L^\infty(\R^n)} \leq C t_0^{-\frac{1}{b-1}}$, we have  $$\sup_{t\geq t_0}\||u(s)|^b\|_{L^a} \leq C t_0^{-1} \sup_{t\geq t_0}\|u(s)\|_{L^a}\leq C t_0^{-1} \| u_0\|_{L^a}.$$ and $B(\tau)\in L^a$.  Thus,  
\begin{align*}
\|B(t)-B(\tau)\|_{L^a}= \|e^{(t-\tau)\Delta} B(\tau) - B(\tau)\|_{L^a}\rightarrow 0,
\end{align*}
as $t\rightarrow \tau+$, by the strong continuity of the heat semigroup on $L^a$. Now if $t_0<t<\tau$, we have 
\begin{align}\label{Bttau}
\|B(t)-B(\tau)\|_{L^a}&\leq \left \|  \int_{t_0}^t (e^{(\tau-s)\Delta} -e^{(t-s)\Delta}) (|u(s)|^{b-1} u(s)) ds \right \|_{L^a} \\
&\qquad + \left\|  \int_{t}^{\tau} e^{(\tau-s)\Delta} (|u(s)|^{b-1} u(s)) ds \right\|_{L^a} \nonumber\\
&\leq  \int_{t_0}^t   \left \| (e^{(\tau-s)\Delta} -e^{(t-s)\Delta}) (|u(s)|^{b-1} u(s)) \right \|_{L^a} ds \nonumber\\
&\qquad +  C  \sup_{s\geq t_0}\left\| |u(s)|^b \right\|_{L^a} (\tau-t). \nonumber
\end{align}

Given any  $\eta>0$,  choose a small $\gamma=\gamma(\eta, t_0)$  such that $0<\gamma<\tau-t_0$ and 
$$\gamma \sup_{s\geq t_0} \| |u(s)|^b\|_{L^a}\leq \eta/2.$$
As we will let $t\rightarrow \tau-$, we may assume that $\tau-\gamma<t<\tau$. Thus, we can write
\begin{align*}
&\int_{t_0}^t   \left \| (e^{(\tau-s)\Delta} -e^{(t-s)\Delta}) (|u(s)|^{b-1} u(s)) \right \|_{L^a} ds \\
&=\int_{t_0}^{\tau-\gamma}   \left \| (e^{(\tau-s)\Delta} -e^{(t-s)\Delta}) (|u(s)|^{b-1} u(s)) \right \|_{L^a} ds\\
&\qquad +\int_{\tau-\gamma}^t   \left \| (e^{(\tau-s)\Delta} -e^{(t-s)\Delta}) (|u(s)|^{b-1} u(s)) \right \|_{L^a} ds\\
&\leq \int_{t_0}^{\tau-\gamma}   \left \| (e^{(\tau-s)\Delta} -e^{(t-s)\Delta}) (|u(s)|^{b-1} u(s)) \right \|_{L^a} ds\\
&\qquad + 2 \gamma \sup_{s\geq t_0} \| |u(s)|^b\|_{L^a}. 
\end{align*}

Note that for each $s\in(t_0, \tau-\gamma)$, $\left \| (e^{(\tau-s)\Delta} -e^{(t-s)\Delta}) |u(s)|^{b-1} u(s) \right \|_{L^a}\rightarrow 0$ as $t\rightarrow \tau-$. Thus, by Lebesgue's Dominated Convergence Theorem, we find
\begin{align*}
\limsup_{t\rightarrow \tau-} \int_{t_0}^t   \left \| (e^{(\tau-s)\Delta} -e^{(t-s)\Delta}) |u(s)|^{b-1} u(s) \right \|_{L^a} ds \leq \eta.
\end{align*}

Then using this in \eqref{Bttau}, we arrive at
\begin{align*}
\limsup_{t\rightarrow \tau-} \|B(t)-B(\tau)\|_{L^a}\leq \eta.
\end{align*}

Since $\eta>0$ is arbitrary, we obtain that $B(t)\in C((t_0,\infty), L^a)$.

It remains to show the continuity of $u$ in $L^a$ at $t=0$ under the condition $a\geq \frac{n(b-1)}{2}$. For this, we need to show that 
\begin{equation}\label{limT}
\lim_{t\rightarrow 0+} \|T(u)(\cdot, t)\|_{L^a}=0.
\end{equation}

Let $\,1\leq p_1\leq p_2\leq \infty.$ We first recall   the standard heat semigroup estimate
\begin{equation*}
\|e^{\tau\Delta} f\|_{L^{p_2}} \leq C \tau^{-\frac{n}{2}(\frac{1}{p_1} -\frac{1}{p_2} ) } \|f\|_{L^{p_1}}, \quad \tau>0.
\end{equation*} 

Then if $a\geq b$, we have  
\begin{align}\label{Talb}
\|T(u)(\cdot, t)\|_{L^a}& \leq \int_{0}^t  \left\|e^{(t-s)\Delta}(|u(s)|^b)\right\|_{L^a} ds \\
& \leq C \int_{0}^t  (t-s)^{-\frac{n}{2}(\frac{b}{a} -\frac{1}{a} ) }  \| |u(s)|^b \|_{L^{a/b}} ds \nonumber\\
& \leq C \left( \sup_{0<s<t}\| u(s) \|_{L^a} \right)^b t^{1-\frac{n(b-1)}{2a}}. \nonumber
\end{align}

On the other hand, if $a<b$, then
\begin{align*}
\|T(u)(\cdot, t)\|_{L^a}& \leq \int_{0}^t  \left\|e^{(t-s)\Delta}(|u(s)|^b)\right\|_{L^a} ds \\
&\leq C \int_{0}^t  (t-s)^{-\frac{n}{2}(1 -\frac{1}{a} ) } \| |u(s)|^b \|_{L^{1}} ds\\
& \leq C \left(\sup_{\tau>0} \tau^{\frac{1}{b-1}} \|u(\tau)\|_{L^\infty} \right)^{b-a}  \left( \sup_{0<s<t}\| u(s) \|_{L^a} \right)^a\\
&\qquad \times \int_{0}^t  (t-s)^{-\frac{n}{2}(1 -\frac{1}{a} ) } s^{-\frac{b-a}{b-1}} ds.
\end{align*}

This gives
\begin{align}\label{Tanb}
\|T(u)(\cdot, t)\|_{L^a}  &\leq C \left(\|\tau^{\frac{1}{b-1}} u(\tau)\|_{L^\infty_\tau L^\infty_x} \right)^{b-a} \\
&\qquad \times \left( \sup_{0<s<t}\| u(s) \|_{L^a} \right)^a  t^{\left(1-\frac{1}{a}\right)\left (-\frac{n}{2}+\frac{a}{b-1}\right)}.\nonumber
\end{align}

Thus, it follows from \eqref{Talb} and \eqref{Tanb} that equality \eqref{limT} holds in the case $a>\frac{n(b-1)}{2}$.

Finally, we consider the case $a=\frac{n(b-1)}{2}$. Since $u_N\rightarrow u$ in $L^\infty_t L^a_x$, $u_N, u\in C((0,\infty), L^a)$, and $u_N(0)=u(0)=u_0$ for all $N\in\mathbb{N}$, it suffices to show that 
$u_N$ is continuous at $t=0$ for each $N\in\mathbb{N}$. Also, since $u_1=e^{t\Delta} u_0$ is continuous at zero, by induction it is enough to show that 
\begin{equation}\label{limTv}
\lim_{t\rightarrow 0+}\|T(v)(\cdot,t)\|_{L^a}=0
\end{equation}
for any $v \in L^\infty((0,\infty), L^a) \cap C([0,\infty), L^a)$ with $v(0)=u_0$ such that the function $\tau^{\frac{1}{b-1}}v(\tau)\in L^\infty((0,\infty), L^\infty(\R^n))$.  Indeed, given any $\eta>0$, choose a function $\phi\in C^\infty_c(\R^n)$ such that 
$$\|u_0-\phi\|_{L^a(\R^n)}\leq \eta$$
and then by the continuity of $v$ at $t=0$, there exists $s^{\ast}>0$ such that 
$$\sup_{0<s<s^{\ast}}\|v(s)-\phi\|_{L^a(\R^n)}\leq 2\eta.$$

We now write
\begin{align}\label{Tva}
\|T(v)(\cdot, t)\|_{L^a}& \leq C \int_{0}^t  \left\|e^{(t-s)\Delta}(|v(s)-\phi|^b)\right\|_{L^a} ds \\
&\qquad + C \int_{0}^t  \left\|e^{(t-s)\Delta}(|\phi|^b)\right\|_{L^a} ds \nonumber\\
&\leq C \int_{0}^t  \left\|e^{(t-s)\Delta}(|v(s)-\phi|^b)\right\|_{L^a} ds  + C t \| |\phi|^b \|_{L^a}. \nonumber
\end{align}

Note that as in \eqref{Talb} and \eqref{Tanb} with $a=\frac{n(b-1)}{2}$, we have,  for any $0<t<s^{\ast}$,  

\begin{align*}
\int_{0}^t  \left\|e^{(t-s)\Delta}(|v(s)-\phi|^b)\right\|_{L^a} ds &\leq C \left( \sup_{0<s<t}\| v(s)-\phi \|_{L^a} \right)^b\\
& \leq C (2\eta)^b
\end{align*}
if $a\geq b$, and 
\begin{align*}
&\int_{0}^t  \left\|e^{(t-s)\Delta}(|v(s)-\phi|^b)\right\|_{L^a} ds \\
& \leq C \left(\|\tau^{\frac{1}{b-1}} v(\tau)\|_{L^\infty_\tau L^\infty_x} \right)^{b-a}
\left( \sup_{0<s<t}\| v(s)-\phi \|_{L^a} \right)^a\\
& \leq C \left(\|\tau^{\frac{1}{b-1}} v(\tau)\|_{L^\infty_\tau L^\infty_x} \right)^{b-a} (2\eta)^a
\end{align*}
if $a<b$.

As $\eta$ is arbitrary, using these estimates in \eqref{Tva}, we see that  limit \eqref{limTv} holds as desired.
 \end{proof}

\vspace{.2in}
\noindent {\bf Acknowledgements.} G.Y.D. is supported in part by a start-up grant from the University of Toronto, 
and NSERC grants RGPIN-2018-06487 and RGPIN-2025-06419. N. C. Phuc is supported in part by  Simons Foundation [Record ID: MPS-TSM-00002686].

\vspace{.2in}

\noindent {\bf AI Usage Disclosure.} ChatGPT  Plus  GPT-5.6 Sol was used to show the continuity in time of $T(u)$ in the proof Theorem \ref{PerRes}. The authors  take full responsibility for the content of the paper.

\end{document}